\newtheorem{proposition}{Proposition}
\newtheorem{lemma}{Lemma}
\newtheorem{corollary}{Corollary}
\newtheorem{theorem}{Theorem}
\newtheorem{definition}{Definition}
\newtheorem{remark}{Remark}
\newtheorem{example}{Example}
\newcommand{\conv}[1]{\mathrm{conv}(#1)}
\renewcommand{\Re}{\mathbb{R}}
\newcommand{\Ze}{\mathbb{Z}}
\newcommand{\Ne}{\mathbb{N}}
\tikzstyle{vertex}=[circle,minimum size=0.005\textwidth,draw,fill=white!100]
\newcommand{\prob}[1]{$(\mathrm{P})#1$}
\newcommand{\ext}[1]{\mathrm{ext}(#1)}
\newcommand{\polyv}{oligo-vertex} 
\newcommand{\inte}[1]{\mathrm{int}(#1)} 
\newcommand{\bd}[1]{\partial #1} 
\title{\LARGE \bf
Optimal switching sequence for switched linear systems 
}
\author{Zeyang Wu\thanks{Email address: wuxx1164@umn.edu.} }
\author{Qie He\thanks{Email address: qiehe01@gmail.edu. Corresponding author.}}
\affil{{\small Department of Industrial \& Systems Engineering, University of Minnesota, USA}}
\date{\today}
\begin{document}

\maketitle

\begin{abstract}
We study the following optimization problem over a dynamical system that consists of several linear subsystems: 
Given a finite set of $n\times n$ matrices and an $n$-dimensional vector, find a sequence of $K$ matrices, each chosen from the given set of matrices, to maximize a convex function over the product of the $K$ matrices and the given vector.
This simple problem has many applications in operations research and control, yet a moderate-sized instance is challenging to solve to optimality for state-of-the-art optimization software.
We propose a simple exact algorithm for this problem.
Our algorithm runs in polynomial time when the given set of matrices has \emph{the \polyv{} property}, a concept we introduce in this paper for a finite set of matrices.
We derive several sufficient conditions for a set of matrices to have the \polyv{} property.
Numerical results demonstrate the clear advantage of our algorithm in solving large-sized instances of the problem over one state-of-the-art global optimization solver.
We also propose several open questions on the \polyv{} property and discuss its potential connection with the finiteness property of a set of matrices, which may be of independent interest.  

\end{abstract}

\section{Introduction} \label{sec:intro}
Many real-world systems exhibit significantly different dynamics under various modes or conditions, for example a manual transmission car operating at different gears, a chemical reactor under different temperatures and flow rates of reactants, and a group of cancer cells responding to different drugs.
Such phenomena can be modeled under a unified framework of switched systems.
A switched system is a dynamical system that consists of several subsystems and a rule that specifies the switching among the subsystems.
Finding a switching rule to optimize the dynamics of a switched system under certain criteria has found numerous applications in power system operations, chemical process control, air traffic
management, and medical treatment design~\cite{sun2006switched,lin2009stability,liberzon2012switching,he2016optimized}.
%
%
%
In this paper, we study the following discrete-time switched linear system: 
\begin{equation} \label{eq_switched_system}
x(k+1) = T_k x(k),  \qquad T_k \in \Sigma, \; k=0,1,\ldots,
\end{equation}
where $x(k)$ is an $n$-dimensional real vector that captures the system state at period $k$, the set $\Sigma$ contains $m$ real matrices in $\Re^{n \times n}$, each of which describes the dynamics of a linear subsystem, and the initial vector $x(0)$ is a given $n$-dimensional real vector $a$.
Such a system with switching only at fixed time instants appear in many practical applications, and is also employed to approximate the more complex dynamics of a continuous-time hybrid system with switching times defined over the real line~\cite{sun2006switched,liberzon2012switching}.

We are interested in the following optimization problem \prob{} related to the system in~\eqref{eq_switched_system}:
\begin{framed}
Given a switched linear system described in~\eqref{eq_switched_system}, a positive integer $K$, and a convex function $f:\Re^n \rightarrow \Re$, find a sequence of $K$ matrices $T_0, T_1, \ldots, T_{K-1} \in \Sigma$ to maximize $f(x(K))$.
\end{framed}

\noindent One type of such convex functions are the $\ell_p$ norms.

\begin{example}
Consider a switched linear system consisting of two subsystems with system matrices  
$A  = 
\begin{bmatrix}
1 & 1 \\
1 & 0
\end{bmatrix}$ and
$B = \begin{bmatrix}
1 & 1 \\
0 & 1
\end{bmatrix}$, an initial vector $a=(2,1)^{\top}$, and $K=8$.
Figure~\ref{fig:trajectories} illustrates the trajectory of $x(k)$ under three different switching sequences,
%
with the final state $x(8)$ being $(53, 23)^{\top}$, $(58, 41)^{\top}$, and $(71, 41)^{\top}$, respectively.
\begin{figure}[ht]
\centering
\includegraphics[scale=0.5]{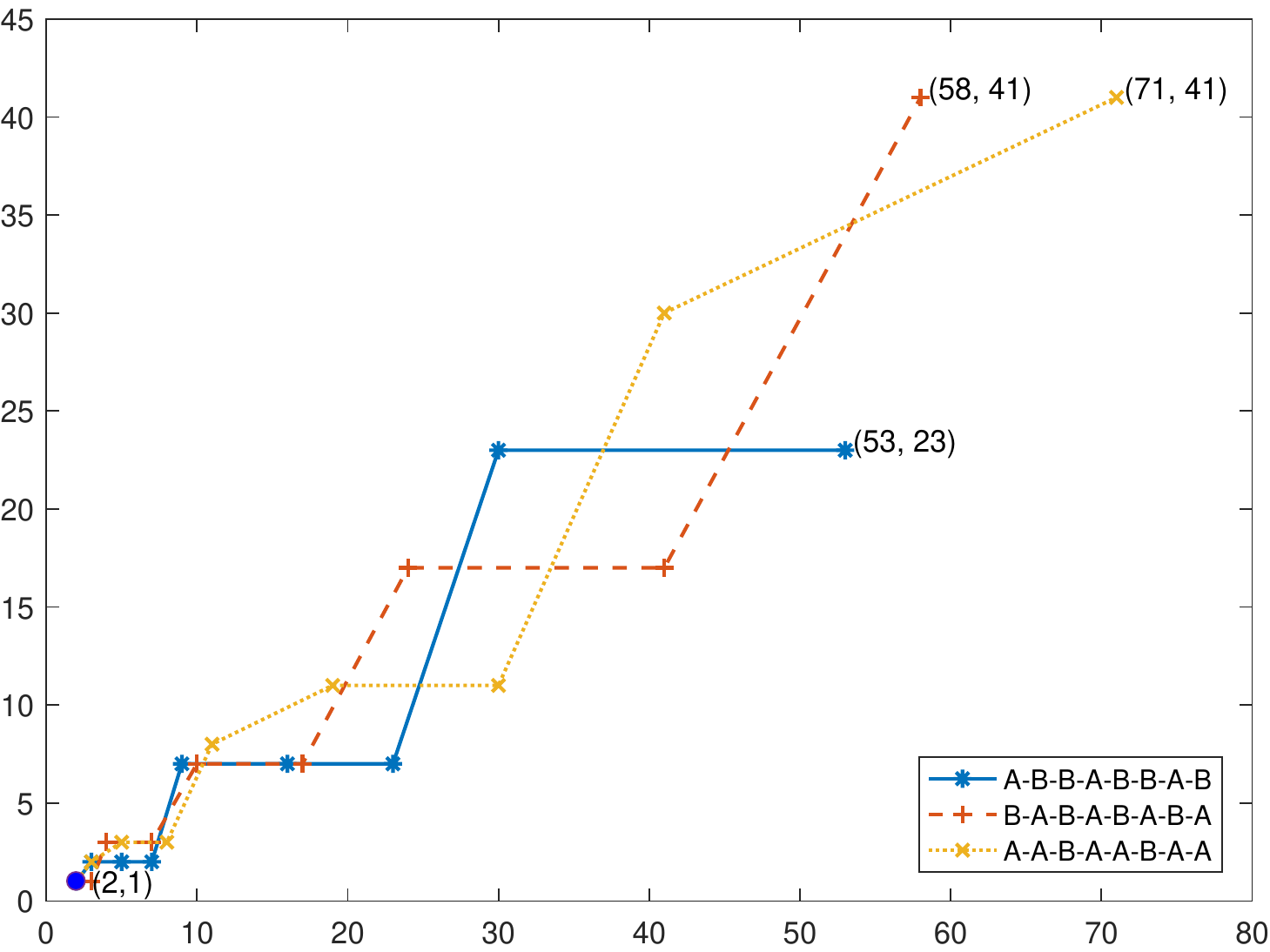}
\caption{The trajectory of $x(k)$ under three matrix sequences}
\label{fig:trajectories}
\end{figure}
\end{example}

\noindent We give three examples below to illustrate the applications of Problem \prob{} and its connection to other problems in control and optimization.

The first example is on design of treatment plans.
Antibiotic resistance renders diseases that were once easily treatable dangerous infections, and has become one of the most pressing public health problems around the world.
Several groups of researchers studied how to design sequential antibiotic treatment plans to restore susceptibility after bacteria develop resistance~\cite{mira2015rational, nichol2015steering}.
%
%
%
%
%
%
%
They model the percentages of $n$ genotypes of an enzyme produced by bacteria in a population after $k$ periods of treatment with the vector $x(k)$, and model the mutation rates among $n$ genotypes under each antibiotic with a probability transition matrix.
The goal is to design a sequence of antibiotics to maximize the percentage of the wild type at the end of the treatment, which is sensitive to all antibiotics.
The treatment design problem is equivalent to solve \prob{} with $a=e_1$, a unit vector with the first component being 1 which denotes $100\%$ wild type in the beginning, and $f(x(K)) = -e_1^\top x(K)$.
In the same vein, \prob{} can model the sequential therapy design problem for many other diseases when $x(k)$ describes the related biometrics of a patient at period $k$ and each matrix models the evolution of patient biometrics under a particular treatment~\cite{he2016optimized}.

The second example is the matrix mortality problem in control~\cite{blondel1997pair,bournez2002mortality}.
Given a positive integer $k$, a set of matrices is said to be $k$-mortal if the zero matrix can be expressed as a product of $k$ matrices in the set (duplication allowed).
A set of matrices is said to be mortal if it is $k$-mortal for some finite $k$.
The matrix mortality problem captures the stability of switched linear systems under certain switching rules.
%
%
%
%
%
%
It can be shown that a finite set of $n \times n$ non-negative matrices is $k$-mortal if and only if the optimal objective value of \prob{} is 0 with $a = \mathbf{1}$, $K=k$, and $f(x(K)) = -\mathbf{1}^\top x(K)$, where $\mathbf{1}$ is an $n$-dimensional vector with each component being 1.

The third example concerns the joint spectral radius of a set of matrices, an important quantity which has found many applications in wavelet functions, constrained coding, and network security management, etc~\cite{jungers2009joint}. The joint spectral radius of a finite set $\Sigma$ of matrices is defined as $\rho(\Sigma) = \limsup_{k \rightarrow \infty} \hat{\rho}_k(\Sigma, \|\cdot\|)$~\cite{rota1960note},
where
\begin{equation} \label{eq:rhok}
\hat{\rho}_k(\Sigma, \|\cdot\|)=\max\{\|T_{k-1}T_{k-2}\ldots T_0\|^{1/k} \mid T_j \in \Sigma, j=0,\ldots,k-1\}
\end{equation}
and $\|\cdot\|$ is some matrix norm.
%
%
%
%
%
%
If we select the matrix norm in~\eqref{eq:rhok} to be induced by the $\ell_p$ norm of a vector, then
\begin{equation} \label{eq:rhok:lp}
(\hat{\rho}_k(\Sigma, \|\cdot\|))^k
= \sup_{\|a\|_p=1}\max\{\|x(K)\|_p \mid  \eqref{eq_switched_system}\}.
\end{equation}
Observe that the inner maximization problem of the right-hand side of~\eqref{eq:rhok:lp} is a special case of \prob{} with the convex function $f(x) = \|x\|_p$.
In general, let $v^*$ be the optimal objective value of \prob{} with $f(x)=\|x\|$ for some norm $\|\cdot\|$ and an initial vector $a$. Then $(v^*)^{1/k}$ provides a lower bound of the quantity $\hat{\rho}_k(\Sigma, \|\cdot\|)$.
%


%
%
A simple way to solve \prob{} is to enumerate all possible matrix sequences, but such an approach quickly becomes impractical as $m$ and $K$ increase.
Even for $m=5$ and $K=30$, we need to enumerate $5^{30}$ solutions, a formidable task for the current fastest computer. 
Another general approach to solve \prob{} is to formulate it as a mixed-integer nonlinear optimization problem, which can be solved by global optimization solvers, but the problem size that can be handled by state-of-the-art commercial solvers is also limited.
%
%
%
In addition, the time complexity of the tree-based search algorithms employed by these global solvers is difficult to analyze in general. 
In many applications, problem \prob{} has to be solved repeatedly with different parameters, so it is of vital importance to have a fast algorithm for \prob{}. 
%

%
We now present our results.
We develop a simple dynamic programming algorithm to solve \prob{} exactly, which means that an optimal matrix sequence is guaranteed at the termination of the algorithm.
Our algorithm is much faster than the state-of-the-art global optimization solver Baron in solving the same instance of \prob{}.
%
%
Another advantage of our algorithm is that it does not require any additional property of the function $f$ such as smoothness or strong convexity.
The main idea of our algorithm is to find out the extreme points of a polytope by iteratively constructing the convex hull of linear transformations of another polytope's extreme points. As pointed out by one referee, this idea has been used before to construct a special polytope needed to compute the joint spectral radius of a finite set of matrices~\cite{guglielmi2013exact}.

Furthermore, we introduce a new concept for a finite set of matrices to analyze the time complexity of our algorithm. 
In particular, we assume that all input data are integers and the value of the convex function $f$ can be queried through an oracle in constant time; we adopt the random-access machine~\cite{papadimitriou2003} as the model of computation, in which each basic operation (addition, comparison, multiplication, etc.) is assume to take the same amount of time and the time complexity of an algorithm is the number of steps/operations required to execute the algorithm.
%
%
%
We define the following notations that are useful for presenting the time-complexity results.
Given a finite set $\Sigma$ of $n \times n$ real matrices and a vector $a \in \Re^n$, let
\[P_k(\Sigma, a):= \conv{\{x(k) \mid x(k)= T_{k-1}\cdots T_0 a, T_j \in \Sigma, j=0,\ldots,k-1\}}\]
be the convex hull of all possible values of $x(k)$ in~\eqref{eq_switched_system} for each integer $k \ge 0$.
Let $N_k(\Sigma, a)$ be the number of extreme points of $P_k(\Sigma, a)$
and
\begin{equation*} 
N_k(\Sigma) =\sup_{a \in \Re^n}\{N_k(\Sigma, a)\}.
\end{equation*}
%
%
%
%
We introduce the following concept for a set of matrices.
\begin{definition} \label{def:oligo-vertex}
A set of matrices $\Sigma$ is said to have \textbf{the \polyv{} property} if there exists $\alpha >0$, positive integer $k_0$, and positive constant $d$ such that $N_k(\Sigma) \le \alpha k^d$ for any $k \ge k_0$.
\end{definition}
\noindent The \polyv{} property of a set of matrices indicates the number of extreme points of $P_k(\Sigma, a)$ grows at most polynomially in $k$ for any initial vector $a$, despite the number of possible values of $x(k)$ grows exponentially with $k$ in general.
With the big-Oh notation commonly used in computer science, the \polyv{} property basically states that $N_k(\Sigma)=O(k^d)$ as $k \rightarrow \infty $ for some positive constant $d$.
%

\subsection*{Our contributions}
We summarize the contributions of this paper as follows.
\begin{enumerate}
	\item We present a simple dynamic programming algorithm to solve \prob{} exactly. 
	Our algorithm does not require any additional property of $f$ other than convexity.
	Numerical experiments demonstrate that the algorithm is much faster than state-of-the-art global optimization software in solving large-sized instances.
	Our algorithm can be considered as a variant of the algorithm for computing the joint spectral radius in~\cite{guglielmi2013exact} with the same basic idea. On the other hand, as it is applied to a different problem, changes such as initialization, the pruning rule, and termination conditions have been made.
	\item We introduce the concept of the \polyv{} property for a finite set of matrices, and show that our algorithm runs in polynomial time if the given set of matrices has the \polyv{} property.
	To the best of our knowledge, this is the first time such a property is introduced for a set of matrices.	
	\item
	We derive several sufficient conditions for a set of matrices to have the \polyv{} property.
	%
	%
	%
	On the other hand, we show that \prob{} is NP-hard for a pair of stochastic matrices or a pair of binary matrices, which implies that the \polyv{} property is unlikely to hold for an arbitrary pair of $n \times n$ matrices unless P=NP.
	Finally we propose several open questions on the \polyv{} property.
	%
\end{enumerate}
%

%
%
%
%
%

\noindent The \polyv{} property we propose may be of independent interest to readers. 
We want to point out some similarities between the \polyv{} property and another important property for a set of matrices that is also concerned with long matrix products---the finiteness property.
A finite set $\Sigma$ of matrices is said to have the finiteness property if the joint spectral radius $\rho(\Sigma)$ is equal to $(\rho_k(T_{k-1}T_{k-2}\ldots T_0))^{1/k}$ with $T_{k-1}, T_{k-2}, \ldots, T_0 \in \Sigma$ for some finite integer $k$, where $\rho(T)$ denotes the spectral radius of the matrix $T$.
The finiteness property has been studied extensively for different families of matrices~\cite{lagarias1995finiteness, jungers2009finiteness}, as it has many implications on stability and stabilization of switched systems.
The finiteness property and the \polyv{} property both hold for the following sets of matrices: commuting matrices, any finite set of matrices with at most one matrix's rank being greater than one~\cite{liu2013rank}, and a pair of $2\times 2$ binary matrices~\cite{jungers2008}.
%
%
%
We suspect that there is a deeper connection between these two properties.
We pose several open questions on the \polyv{} property at the end of this paper.
%

%

The rest of the paper is organized as follows.
In Section~\ref{sec:review}, we review results related to the problem we study, with a main focus on computational complexity.
In Section~\ref{sec:complexity}, we first prove that \prob{} is NP-hard for a pair of stochastic matrices or binary matrices, and then introduce an exact algorithm for \prob{} and analyze its time complexity for general $n$ and $n=2$.
%
%
In Section~\ref{sec:polytime}, we present several sufficient conditions for a set of matrices to have the \polyv{} property.
In Section~\ref{sec:binary}, we prove that a pair of $2 \times 2$ binary matrices has the \polyv{} property.
We present some computational results in Section~\ref{sec:computation}, and conclude in Section~\ref{sec:conclusion} with some open problems.

\section{Related Work}
\label{sec:review}
Our problem aims to find the optimal switching rule of a discrete-time switched linear system without continuous control input.
There have been a rich body of theoretical and computational results on optimal control of switched linear systems, such as finding optimal switching instants given a fixed switching sequence~\cite{yuan2015hybrid}, minimizing the number of switches with known initial and final states~\cite{egerstedt2003optimal}, finding suboptimal policies~\cite{antunes2017linear}, study of the exponential growth rates of the trajectories under different switching rules~\cite{hu2011generating}, and characterizing the value function of switched linear systems with linear and quadratic objectives~\cite{zhang2009value}.
We refer interested readers to the books~\cite{sun2006switched,liberzon2012switching} and recent surveys~\cite{sun2005analysis,zhu2015optimal} for more details on switched linear systems.
Finding the optimal switching sequence for a switched linear system also belongs to a broader class of problems called mixed-integer optimal control~\cite{sager2005numerical,sager2012integer} or optimal control of hybrid systems~\cite{antsaklis2000brief}, which can be formulated as a mixed-integer nonlinear optimization problem and solved by general mixed-integer optimization solvers.
%
%
%


We now survey results in the literature that are closely related to the problem we study.
Blondel and Tsitsiklis showed that the matrix mortality problem is undecidable for a pair of $48 \times 48$ integer matrices and the matrix $k$-mortality problem is NP-complete for a pair of $n\times n$ binary matrices with $n$ being an input parameter~\cite{blondel1997pair}.
The complexity of the matrix $k$-mortality problem is however unknown when the matrix dimension $n$ is fixed.
%
For the antibiotics time machine problem, Mira et al. used exhaustive search to find the optimal sequence of antibiotics for a small sized problem~\cite{mira2015rational}.
Tran and Yang showed that the antibiotics time machine problem is NP-hard when the number of matrices and the matrix dimension are both input parameters~\cite{tran2015antibiotics}.
%
%
The antibiotics time machine can be also seen as a special finite-horizon discrete-time Markov decision process in which no state is observable.
It has been shown in~\cite{papadimitriou1987} that the finite-horizon unobservable Markov decision process is NP-hard.
Therefore, our results identify several polynomially solvable cases of finite-horizon unobservable Markov decision processes.
%
Computing the joint spectral radius for a finite set of matrices either exactly or approximately has been shown to be NP-hard~\cite{tsitsiklis1997lyapunov}, and
has been a topic of active research~\cite{blondel2005comp,parrilo2008app,ahmadi2014joint}.
Guglielmi and Protasov proposed an algorithm to compute the joint spectral radius of a finite set of matrices~\cite{guglielmi2013exact}. 
The key component of the algorithm is to construct a special polytope $P$ from which the joint spectral radius of the given set of matrices can be computed accordingly. 
%
%
Similar to our algorithm, the polytope $P$ is constructed by finding out its extreme points, through an iterative procedure of taking the convex hull of linear transformation of extreme points of another polytope.
%
However, the purposes, the running time, and the implementation details of the two algorithms are different.	
The algorithm in~\cite{guglielmi2013exact} aims to find a polytope that gives an extremal norm for the given set of matrices, and only terminates in finite time for the set of matrices satisfying certain conditions.
On the other hand, our algorithm aims to construct the convex hull of all possible states reachable by the switched system after $K$ periods, and will always terminate after exactly $K$ periods for any given set of matrices.
The algorithm in~\cite{guglielmi2013exact} has recently been improved in~\cite{mejstrik2018improved}.
The finiteness conjecture~\cite{lagarias1995finiteness}, which states that the finiteness property holds any set of real matrices, had remained a major open problem in the control community until early 2000s when a group of researchers showed that there exists a pair of $2 \times 2$ matrices that does not have the finiteness property~\cite{bousch2002asymptotic,blondel2003elementary,kozyakin2005dynamical}.
%
The first constructive counterexample for the finiteness conjecture was proposed in~\cite{hare2011explicit}.
The finiteness conjecture was shown to be true for a pair of $2 \times 2$ binary matrices~\cite{jungers2008} and a finite set of matrices with at most one matrix's rank being greater than one~\cite{liu2013rank}.
%

\section{Computational Complexity and the Algorithm}
\label{sec:complexity}
\subsection{Notations}
We first introduce some notations that will be used throughout this paper.
Let $\Ne$, $\Ze$, $\Re$, $\Re_{+}$, and $\Re_{-}$ denote the sets of natural numbers (including 0), integers, real numbers, non-negative real numbers, and non-positive real numbers, respectively.
We use $x_i$ to denote the $i$-th component of a given vector $x$.
%
%
Let $\|x\|_\infty$ and $\|T\|_\infty$ denote the infinity norm of vector $x$ and matrix $T$, respectively.
Given two positive integers $i,j$, let $[i:j]$ denote the set of integers $\{i,i+1, \ldots,j\}$ if $i\le j$ and $\emptyset$ if $i>j$.
Given two scalar functions $f$ and $g$ defined on some subset of real numbers, we write $f(x)=O(g(x))$ as $x \rightarrow \infty$, if there exist $\alpha$ and $x_0 \in \Re$ such that $|f(x)| \le \alpha|g(x)|$ for all $x \ge x_0$.
Given a set $S$, let $|S|$ denote the cardinality of $S$, $\conv{S}$ denote the convex hull of $S$, $\inte{S}$ denote the interior of $S$, and $\bd{S}$ denote the boundary of $S$, respectively.
Let $\ext{S}$ denote the set of extreme points of a convex set $S$.
Given a set $S \subseteq \Re^n$ and a matrix $T \in \Re^{n\times n}$, let $TS:= \{Tx \mid x\in S\}$ be the image of $S$ under the linear mapping defined by $T$.
Let $\mathcal{Q}_i$ denote the $i$-th quadrant of the plane under the standard two-dimensional Cartesian system, for $i=1,2,3,4$.
For example, $\mathcal{Q}_1=\{x \in \Re^2 \mid x_1 \ge 0, x_2 \ge 0\}$.

\subsection{Complexity}
\begin{theorem}\label{thm:NPhard}
\prob{} is NP-hard for a pair of left (right) stochastic matrices and a linear function $f$.
\end{theorem}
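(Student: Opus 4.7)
I would reduce the matrix $K$-mortality problem for a pair of binary matrices to \prob{} with stochastic matrices. Blondel and Tsitsiklis~\cite{blondel1997pair} proved that, given $n \times n$ binary matrices $A, B$ and a positive integer $K$ (with $n$ as input), deciding whether there exist $M_0, \ldots, M_{K-1} \in \{A,B\}$ with $M_{K-1} M_{K-2} \cdots M_0 = 0$ is NP-complete. My goal is to turn such an instance into an instance of \prob{} whose optimal value reveals the answer.

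The idea is to embed $A$ and $B$ into $(n+1) \times (n+1)$ right stochastic matrices by scaling and adding a single ``slack'' coordinate. Take $d := n$, which upper-bounds every row sum of $A$ and $B$, and set
\[
A' := \begin{pmatrix} A/d & \mathbf{1} - A\mathbf{1}/d \\ \mathbf{0}^\top & 1 \end{pmatrix}, \qquad B' := \begin{pmatrix} B/d & \mathbf{1} - B\mathbf{1}/d \\ \mathbf{0}^\top & 1 \end{pmatrix}.
\]
Each of $A', B'$ is non-negative with row sums equal to $1$, hence right stochastic. I would pick the initial vector $a = (\mathbf{1}^\top, 0)^\top \in \Re^{n+1}$ and the linear (and therefore convex) objective $f(x) = -\sum_{i=1}^n x_i$. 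Everything is constructed in polynomial time with entries of polynomial bit complexity.

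The key step is a block-matrix induction: for any sequence $M'_0, \ldots, M'_{K-1} \in \{A', B'\}$,
\[
M'_{K-1} \cdots M'_0 \;=\; \begin{pmatrix} (M_{K-1} \cdots M_0)/d^K & \ast \\ \mathbf{0}^\top & 1 \end{pmatrix},
\]
where $M_i$ is the binary matrix corresponding to $M'_i$. Hence the first $n$ components of $x(K) = M'_{K-1} \cdots M'_0 a$ equal $(M_{K-1} \cdots M_0)\mathbf{1}/d^K$, and
\[
f(x(K)) \;=\; -\frac{1}{d^K}\, \mathbf{1}^\top (M_{K-1} \cdots M_0)\, \mathbf{1}.
\]
Because the matrices involved are non-negative, $f(x(K)) \le 0$, with equality iff $(M_{K-1} \cdots M_0)\mathbf{1} = 0$. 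For non-negative matrices the product vanishes iff its image on the strictly positive vector $\mathbf{1}$ vanishes, so the optimal value of \prob{} is $0$ if and only if $\{A, B\}$ is $K$-mortal. The reduction is therefore complete.

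The left stochastic case is handled symmetrically: place the slack as an additional bottom row instead of as a right column, namely
\[
A' := \begin{pmatrix} A/d & \mathbf{0} \\ \mathbf{1}^\top - \mathbf{1}^\top A/d & 1 \end{pmatrix},
\]
and likewise for $B'$, which are column-stochastic; the same block computation (with the slack row now absorbing the excess) yields the same identity for the top $n$ entries of $x(K)$. The only nontrivial point throughout is the equivalence ``$M_{K-1} \cdots M_0 = 0 \iff (M_{K-1} \cdots M_0)\mathbf{1} = 0$'' in the non-negative setting, which I do not expect to be an obstacle; the rest is routine block algebra.
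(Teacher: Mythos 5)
Your proof is correct, but it takes a genuinely different route from the paper. The paper reduces directly from 3-SAT: for each clause it builds a $(2n+1)$-node directed graph for each of the two matrices and assembles block-diagonal adjacency matrices; since every node has in-degree one, these matrices are simultaneously binary and left stochastic, so the very same construction also yields the paper's corollary that \prob{} is NP-hard for a pair of binary matrices, and the right-stochastic case is obtained essentially by transposing the construction. You instead reduce from the matrix $k$-mortality problem for a pair of binary matrices (Blondel--Tsitsiklis), exploiting the observation---made informally in the paper's introduction for non-negative matrices---that mortality is equivalent to the optimal value of \prob{} being zero with $a=\mathbf{1}$ and $f(x)=-\mathbf{1}^{\top}x$, and then embedding $A/d$, $B/d$ into $(n+1)\times(n+1)$ stochastic matrices via a slack coordinate. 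Your block algebra is sound in both variants (slack column for the row-stochastic case, slack row for the column-stochastic case), and the key equivalence $C\mathbf{1}=0$ iff $C=0$ for non-negative $C$ is immediate, so the argument stands. What your approach buys is brevity and reuse of a known NP-completeness result; what it gives up is self-containedness and the binary corollary: your stochastic matrices have entries such as $1/n$ rather than $0$ and $1$, so the binary-matrix corollary does not follow from your construction (though it follows even more directly from $k$-mortality itself, without any stochastic embedding). One point worth making explicit if you write this up: the polynomiality of your reduction requires $K=k$ to be of polynomial encoding length, i.e., it inherits whatever bound on $k$ underlies the NP-completeness of $k$-mortality (the certificate there is the length-$k$ sequence, so $k$ is effectively polynomially bounded); the paper's 3-SAT reduction sidesteps this by setting $K=n$ outright.
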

\begin{proof}
We prove the result based on a reduction from the 3-SAT problem. A 3-SAT problem asks whether there exists a truth assignment of several variables such that a given set of clauses defined over these variables, each with three literals, can all be satisfied. The 3-SAT problem is known to be NP-complete~\cite{garey1979computers}. 

Given an instance of the 3-SAT problem with $n$ variables $y_1,\ldots,y_n$ and $m$ clauses $C_1, \ldots, C_m$, we construct an instance of \prob{} with $\Sigma=\{A, B\}$ as follows. Matrices $A$ and $B$ are $m(2n+1) \times m(2n+1)$ adjacency matrices of two directed graphs $G_A$ and $G_B$, respectively. The construction of $G_A$ and $G_B$ will be explained in detail below. We set the total number of periods $K=n$. Let $e_{k} \in \Re^{m(2n+1)}$ be a vector with the $k$-th entry being 1 and all other entries being 0. We set $x(0)=\sum_{j=1}^{m} e_{(j-1)(2n+1)+1}$ and $f(x)=c^{\top}x$ with $c=-\sum_{j=1}^{m}e_{j(2n+1)}$. We claim that the 3-SAT instance is satisfiable if and only if the optimal objective value of the constructed instance of \prob{} is $-m$.

Graph $G_A$ is constructed as follows. It contains $m(2n+1)$ nodes, divided equally into $m$ groups, each group corresponding to a clause. There is no arc between nodes in different groups. Let $u_{j,1}, u_{j,2}, \ldots, u_{j, 2n+1}$ be the $2n+1$ nodes corresponding to clause $j$. The arcs among these nodes are as follows. Node $u_{j,2n+1}$ has a self loop. There is an arc from $u_{j,l+1}$ to $u_{j,l}$ for $l=[1:2n]$ unless literal $y_l$ is included in clause $C_j$; in that case, there will be an arc from node $u_{j,n+l+1}$ to node $u_{j,l}$. Graph $G_B$ is constructed similarly with the same set of nodes. There is an arc from $u_{j,l+1}$ to $u_{j,l}$ for $l=[1:2n]$ unless literal $y^c_l$ is included in clause $C_j$; in that case, there will be an arc from node $u_{j,n+l+1}$ to node $u_{j,l}$.
An example for the clause $C_j=y_1\vee y^c_3 \vee y_4$ with a total of $4$ variables is shown in Figure~\ref{fig:stochasticmatrices}.
\begin{figure}[ht]
  \centering
\begin{tikzpicture}[scale=0.5]
    \node [vertex] (1) at (0,0) {$u_{j,1}$};
		\node [vertex] (2) at (0,-3) {$u_{j,2}$} ;
		\node [vertex] (3) at (0,-6) {$u_{j,3}$} ;
		\node [vertex] (4) at (0,-9) {$u_{j,4}$} ;
		\node [vertex] (5) at (4,0) {$u_{j,5}$} ;
		\node [vertex] (6) at (4,-3) {$u_{j,6}$} ;
		\node [vertex] (7) at (4,-6) {$u_{j,7}$} ;
		\node [vertex] (8) at (4,-9) {$u_{j,8}$} ;
		\node [vertex] (9) at (4,-12) {$u_{j,9}$} ;
		\node at (2,-15) {Part of $G_A$};
		\draw[->] (6) -- (1);
		\draw[->] (3) -- (2);
		\draw[->] (4) -- (3);
		\draw[->] (9) -- (4);
		\draw[->] (6) -- (5);
		\draw[->] (7) -- (6);
		\draw[->] (8) -- (7);
		\draw[->] (9) -- (8);		
		\path[->] (9) edge [loop below] (9);
\end{tikzpicture}
\hspace{5mm}
\begin{tikzpicture}[scale=0.5]
    \node [vertex] (1) at (0,0) {$u_{j,1}$};
		\node [vertex] (2) at (0,-3) {$u_{j,2}$} ;
		\node [vertex] (3) at (0,-6) {$u_{j,3}$} ;
		\node [vertex] (4) at (0,-9) {$u_{j,4}$} ;
		\node [vertex] (5) at (4,0) {$u_{j,5}$} ;
		\node [vertex] (6) at (4,-3) {$u_{j,6}$} ;
		\node [vertex] (7) at (4,-6) {$u_{j,7}$} ;
		\node [vertex] (8) at (4,-9) {$u_{j,8}$} ;
		\node [vertex] (9) at (4,-12) {$u_{j,9}$} ;
		\draw[->] (2) -- (1);
		\draw[->] (3) -- (2);
		\draw[->] (8) -- (3);
		\draw[->] (5) -- (4);
		\draw[->] (6) -- (5);
		\draw[->] (7) -- (6);
		\draw[->] (8) -- (7);
		\draw[->] (9) -- (8);	
		\node at (2,-15) {Part of $G_B$};
		\path[->] (9) edge [loop below] (9);
\end{tikzpicture}
\caption{The nodes and arcs in $G_A$ and $G_B$ corresponding to the clause $C_j=y_1\vee y^c_3 \vee y_4$ with a total of $4$ variables.}
\label{fig:stochasticmatrices}
\end{figure}
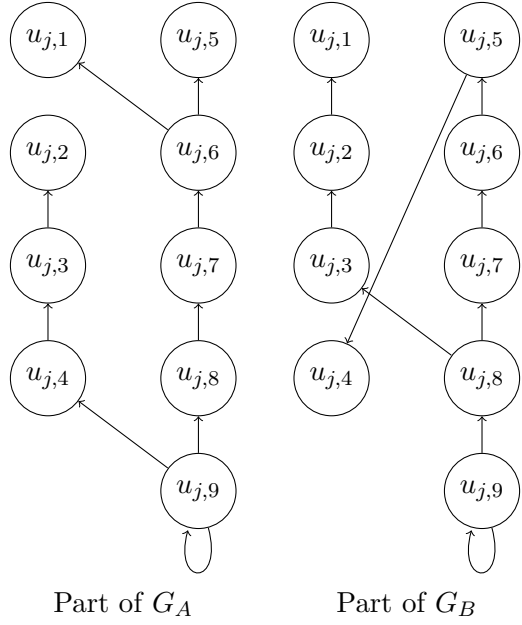

For $j \in [1:n]$, let $A^j$ ($B^j$) be the adjacency matrix of the component of $G_A$ ($G_B$) corresponding to the $j$-th clause. Since each node has in-degree 1, each column of $A^j$ ($B^j$) has exactly one entry being 1, so $A^j$ ($B^j$) is a left stochastic matrix. We can associate each truth assignment of $y_1, \ldots, y_n$ with a sequence of matrices $T^j_0, \ldots, T^j_{n-1}$ with $T^j_t \in \{A^j, B^j\}$ for $t \in [0:n-1]$. In particular, if $y_t$ is true (false), then $T^j_{t-1}$ is $A$ ($B$). Consider the product
\begin{equation*}
[0, \cdots, 0, -1] T^j_{n-1}T^j_{n-2}\cdots T^j_0
\left[
\begin{array}{c}
1\\
0\\
\vdots\\
0
\end{array}
\right].
\end{equation*}
It can be verified that this product is $-1 (0)$ if any only if the truth assignment of $y_1, \ldots, y_n$ makes clause $j$ satisfied (unsatisfied).

Order the nodes of $G_A$ or $G_B$ lexicographically, i.e.,
\begin{align*}
  u_{1,1}, u_{1,2}, \ldots, u_{1,2n+1}, u_{2,1}, \ldots, u_{2,2n+1}, \ldots, u_{m,2n+1}.
\end{align*}
Let $A$ and $B$ be the adjacency matrix of $G_A$ and $G_B$, respectively. Then both $A$ and $B$ are block diagonal matrices with $m$ blocks of $(2n+1)\times (2n+1)$ matrices. In particular,
\begin{equation} \label{eq:matricesAB}
A=\left[
\begin{array}{cccc}
A^1 & & & \\
& A^2 & & \\
& & \ddots & \\
& & & A^m
\end{array}
\right],
B=\left[
\begin{array}{cccc}
B^1 & & & \\
& B^2 & & \\
& & \ddots & \\
& & & B^m
\end{array}
\right].
\end{equation}
Both $A$ and $B$ are left stochastic matrices. When $x(0)=\sum_{j=1}^{m} e_{(j-1)(2n+1)+1}$, $c=-\sum_{j=1}^{m}e_{j(2n+1)}$, $T_t \in \{A, B\}$ for $t \in [0:n-1]$,
\begin{align*}
c^{\top}T_{n-1}\ldots T_0 x(0)=
\sum_{j=1}^m
[0, \cdots, 0, -1] T^j_{n-1}T^j_{n-2}\cdots T^j_0
\left[
\begin{array}{c}
1\\
0\\
\vdots\\
0
\end{array}
\right].
\end{align*}
Therefore, there exists a truth assignment such that the 3-SAT instance is satisfied if and only if the optimal objective value of the constructed instance of \prob{} is $-m$. This reduction is done in time polynomial in $m$ and $n$.

To prove that \prob{} is NP-hard for a pair of right stochastic matrices, we can construct an instance of \prob{} in a similar way to the case of left stochastic matrices and show that there exists a truth assignment such that the 3-SAT instance is satisfied if and only if the optimal objective value of the constructed instance is $-m$. In particular, we let $x(0)=-\sum_{j=1}^{m}e_{j(2n+1)}$ (the vector $c$ in the instance of \prob{} with left stochastic matrices above), $f(x)=c^{\top}x$ with $c=\sum_{j=1}^{m} e_{(j-1)(2n+1)+1}$ (the initial vector $x(0)$ in the instance of \prob{} with left stochastic matrices above), and the two matrices be the transpose of the two matrices $A$ and $B$ defined in~\eqref{eq:matricesAB}. 
\end{proof}

\noindent Since the matrices constructed in the proof of Theorem~\ref{thm:NPhard} are also binary matrices, we have the following result.
\begin{corollary}
\prob{} is $NP$-hard for a pair of binary matrices and a linear function $f$.
\end{corollary}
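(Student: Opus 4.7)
The plan is to invoke the reduction already constructed in the proof of Theorem~\ref{thm:NPhard} and simply observe that the matrices it produces are binary. First I would recall that $A^j$ and $B^j$ are defined as adjacency matrices of finite directed graphs $G_A$ and $G_B$, so every entry of $A^j$ and $B^j$ lies in $\{0,1\}$. The global matrices $A$ and $B$ in~\eqref{eq:matricesAB} are block diagonal with these $0/1$ blocks along the diagonal and zeros elsewhere, hence $A,B \in \{0,1\}^{m(2n+1)\times m(2n+1)}$; that is, both $A$ and $B$ are binary matrices.

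Next I would observe that the objective $f(x)=c^{\top}x$ with $c=-\sum_{j=1}^{m}e_{j(2n+1)}$, the initial vector $x(0)=\sum_{j=1}^{m}e_{(j-1)(2n+1)+1}$, and the horizon $K=n$ used in Theorem~\ref{thm:NPhard} are all produced in time polynomial in $m$ and $n$, and $f$ is linear. Theorem~\ref{thm:NPhard} already establishes that the optimal objective value of the constructed instance of \prob{} equals $-m$ if and only if the given 3-SAT formula is satisfiable. Since the matrices $A,B$ just happen to be both left stochastic and binary, the identical reduction is a polynomial-time reduction from 3-SAT to the restriction of \prob{} to pairs of binary matrices with a linear objective, which yields the claimed NP-hardness.

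There is essentially no obstacle: once binarity of $A$ and $B$ is read off from their construction, NP-hardness in the binary case follows for free from Theorem~\ref{thm:NPhard}. The only sentence-level care needed is to emphasize that the property of being binary is preserved under the block-diagonal assembly in~\eqref{eq:matricesAB}, which is immediate because placing $0/1$ blocks along the diagonal of an otherwise-zero matrix produces a $0/1$ matrix. Hence the corollary admits a one-line proof of the form: the matrices $A$ and $B$ in the proof of Theorem~\ref{thm:NPhard} are binary, so the same reduction establishes the result.
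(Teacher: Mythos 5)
Your proposal is correct and matches the paper's argument exactly: the paper also derives the corollary by noting that the matrices $A$ and $B$ constructed in the proof of Theorem~\ref{thm:NPhard} are adjacency matrices and hence binary, so the same reduction applies verbatim.
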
 
\subsection{The Algorithm}
\label{sec:algorithm}
%
%
In this section, we present a simple forward dynamic programming algorithm to solve \prob{} exactly, described in Algorithm~\ref{algorithm1}. 
The critical step of Algorithm~\ref{algorithm1} is Step~\ref{algorithm1:convexify}, which constructs $E_k$, the set of extreme points of $P_k(\Sigma, a)$, sequentially for $k = 0, 1, \ldots, K$.
\begin{algorithm}[ht]
\caption{A forward dynamic programming algorithm to solve \prob{}.}
\begin{algorithmic}[1]
\State \textbf{Input:} Matrices $\Sigma=\{A_1, \ldots, A_m\} \in \Ze^{n \times n}$, initial vector $a \in \Ze^n$, value oracle $f$, and positive integer $K$.
\State \textbf{Output:} A sequence of matrices $T_0,\ldots, T_{K-1} \in \Sigma$ that maximize $f(T_{k-1}T_{k-2}\cdots T_0 a)$.
\State \textbf{Initialize:}
Set $E_0 = \{a\}$.
\For {$k=0,1,\ldots, K-1$}
\State Set $F^i_{k} = A_iE_k$ for $i=1,\ldots, m$. \label{algorithm1:multiplication}
\State For each point $x \in \cup_{i=1}^mF^i_{k}$, check if $x$ is an extreme point of $\conv{\cup_{i=1}^m F^i_{k}}$, by solving a linear program. Let $E_{k+1}$ be the set of all extreme points of $\conv{\cup_{i=1}^m F^i_{k}}$.  \label{algorithm1:convexify}
\EndFor
\State Find an $x^*(K) \in \arg\max \{f(x) \mid x \in E_K \}$ by enumeration. \label{algorithm1:search}
\State Retrieve the optimal matrix sequence $T_{K-1},T_{K-2}, \ldots, T_0$ from $x^*(K)$. \label{algorithm1:retrieve}
\end{algorithmic}
\label{algorithm1}
\end{algorithm}

We specify the details of Step~\ref{algorithm1:convexify} later.
In fact, Step~\ref{algorithm1:convexify} can be any algorithm that takes a set of points $S$ as input and output $\ext{\conv{S}}$.
There are several efficient algorithms the construct the convex hull of a set of points on the plane, more efficient than linear programs. 
It is, however, difficult to construct $\conv{S}$ efficiently in higher dimensional space.
%
%
The correctness of Algorithm~\ref{algorithm1} is shown in the proposition below.
%



\begin{proposition}
Algorithm~\ref{algorithm1} solves \prob{} correctly.
\end{proposition}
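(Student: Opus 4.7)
The plan is to prove correctness by establishing, via induction on $k$, the invariant $E_k = \ext{P_k(\Sigma,a)}$ for every $k \in [0:K]$, and then invoking convexity of $f$ to conclude. The base case is immediate: $E_0 = \{a\} = \ext{P_0(\Sigma,a)}$. For the inductive step, denote by $S_k$ the finite set of reachable states at period $k$, so $P_k(\Sigma,a) = \conv{S_k}$, and observe that $S_{k+1} = \bigcup_{i=1}^m A_i S_k$. Under the inductive hypothesis $E_k = \ext{\conv{S_k}}$, the central identity to verify is
\[
P_{k+1}(\Sigma,a) = \conv{\bigcup_{i=1}^m A_i S_k} = \conv{\bigcup_{i=1}^m A_i E_k} = \conv{\bigcup_{i=1}^m F^i_k}.
\]
This follows from the elementary fact that $\conv{A_i S_k} = A_i \conv{S_k} = A_i \conv{E_k} = \conv{A_i E_k}$, where the outer equalities use linearity of $A_i$ and the inner one uses that $S_k$ is finite so $\conv{E_k} = \conv{S_k}$. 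Step~\ref{algorithm1:convexify} then sets $E_{k+1}$ to the extreme points of this polytope, completing the induction.

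Given the invariant at $k = K$, the final two steps of the algorithm yield an optimal solution. Since $f$ is convex and $P_K(\Sigma,a)$ is a polytope, the maximum of $f$ over $P_K(\Sigma,a)$ is attained at one of its extreme points (a standard fact about convex functions on polytopes), so
\[
\max_{T_0,\ldots,T_{K-1} \in \Sigma} f\bigl(T_{K-1}\cdots T_0 a\bigr) = \max_{x \in S_K} f(x) = \max_{x \in P_K(\Sigma,a)} f(x) = \max_{x \in E_K} f(x),
\]
which is precisely what Step~\ref{algorithm1:search} computes. For Step~\ref{algorithm1:retrieve}, one augments the iteration so that each $x \in E_{k+1}$ is stored together with some index $i$ and some $y \in E_k$ such that $x = A_i y$; tracing these pointers backward from $x^*(K)$ recovers a matrix sequence $T_{K-1}, \ldots, T_0$ realizing $x^*(K)$.

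I anticipate no real obstacle; the argument is routine once the invariant is identified. The only points that deserve care are the elementary identity $\conv{A \cdot S} = A \cdot \conv{S}$ for finite $S$ (used per matrix $A_i$) and the observation that every extreme point of $\conv{\bigcup_i A_i E_k}$ lies in $\bigcup_i A_i E_k$, which justifies both the enumeration inside Step~\ref{algorithm1:convexify} and the backtracking in Step~\ref{algorithm1:retrieve}.
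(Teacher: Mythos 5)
Your proposal is correct and follows essentially the same route as the paper's proof: an induction establishing $E_k = \ext{P_k(\Sigma,a)}$ for all $k \in [0:K]$, combined with the standard fact that a convex function attains its maximum over a polytope at an extreme point, so that maximizing over $E_K$ suffices. You merely spell out the inductive step (via $\conv{\bigcup_i A_i S_k} = \conv{\bigcup_i A_i E_k}$) and the backtracking for Step~\ref{algorithm1:retrieve}, which the paper leaves as ``not difficult to show.''
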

\begin{proof}
%
%
First it is not difficult to show by induction that the set $E_k$ constructed in Algorithm~\ref{algorithm1} is the set of extreme points of $P_k(\Sigma, a)$ for each $k \in [0:K]$.
Since maximizing a convex function $f$ over a finite set $S$ is equivalent to maximizing $f$ over $\conv{S}$ as well as maximizing $f$ over $\ext{\conv{S}}$~\cite{rockafellar2015convex}, \prob{} is equivalent to $\max\{f(x) \mid x \in P_K(\Sigma, a)\} = \max\{f(x) \mid x \in E_K\}$.
Then the result follows.
\end{proof}
\begin{remark}
The fact that we are maximizing a convex function in the objective is critical for the correctness of Algorithm~\ref{algorithm1}.
If we minimize $f(x(K))$ in \prob{} instead, then Algorithm~\ref{algorithm1} will not give the correct optimal solution in general.
\end{remark}

We now specify the linear program in Step~\ref{algorithm1:convexify} of Algorithm~\ref{algorithm1}.
%
%
Given a finite set $S=\{p^1, \ldots, p^l\} \subseteq \Re^n$, checking if a point $p^j \in S$ is an extreme point of $\conv{S}$ can be done by solving the linear program below.
\begin{subequations}
\label{eq:separation}
\begin{align} 
v^* = \max_{z, z_0} \;\; & (p^j)^{\top}z - z_0 \\
\text{s.t.} \; \; & (p^i)^{\top}z - z_0 \le 0, \; i=1,\ldots, l, i\neq j,\\
& (p^j)^{\top}z - z_0 \le 1, \\
& z \in \Re^n, z_0 \in \Re.
\end{align}  
\end{subequations}
%
%
Problem~\eqref{eq:separation} is always feasible and bounded. 
Suppose its optimal solution is $z^*$ and $ z^*_0$, and $v^*$ is the corresponding optimal objective value. If $v^* > 0$, then we find a hyperplane $(z^*)^\top x = z_0^*$ that separates $p^j$ and the set $S \setminus \{p^j\}$, so $p^j$ is an extreme point of $\conv{S}$. Otherwise $p^j$ is not an extreme point of $\conv{S}$.
%
%
Problem~\eqref{eq:separation} can be solved by various interior point methods in polynomial time, for example Karmarkar's algorithm.
%
%
Recall that $M$ is the maximum absolute value of the entries of $A_1, \ldots, A_m$, and $a$.
%
%
%
\begin{proposition} \label{prop:runtime}
\sloppy If Karmarkar's algorithm is employed to solve the linear programs at Step~\ref{algorithm1:convexify}, the running time of Algorithm~\ref{algorithm1} is $O(m^2n^{4.5}(\log n + \log M) \sum_{k=0}^{K-1}k N_k(\Sigma)^2)$. 
\end{proposition}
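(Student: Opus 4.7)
The plan is to cost each iteration $k$ of the outer loop separately and then sum over $k=0,1,\ldots,K-1$. Two ingredients need to be controlled: the cardinality of the sets $E_k$ and $\cup_{i=1}^m F^i_k$ that drive each iteration, and the bit length of the numbers stored in these sets, which feeds into the complexity of Karmarkar's algorithm.

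First I would argue that $|E_k| \le N_k(\Sigma)$, which is immediate because, as established in the correctness proof, $E_k$ is the set of extreme points of $P_k(\Sigma,a)$. Next I would bound the size of the entries of every point in $E_k$. Starting from $\|a\|_\infty \le M$ and applying induction on $k$, one obtains $\|x(k)\|_\infty \le n^k M^{k+1}$ for every $x(k) \in P_k(\Sigma,a)$, since each application of a matrix $A_i \in \Sigma$ with $\|A_i\|_\infty \le nM$ inflates the infinity norm by at most a factor of $nM$. Consequently every coordinate of every point processed at iteration $k$ is an integer of bit length $O(k(\log n + \log M))$.

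With these two facts in hand I would analyze the two substantive steps of iteration $k$. Step~\ref{algorithm1:multiplication} performs $m$ matrix-by-$E_k$ products, costing $O(m n^2 N_k(\Sigma))$ arithmetic operations, which will be absorbed by the convex-hull step. For Step~\ref{algorithm1:convexify}, the candidate set $\cup_{i=1}^m F^i_k$ contains at most $mN_k(\Sigma)$ points, so we solve at most $mN_k(\Sigma)$ instances of the linear program~\eqref{eq:separation}. Each such LP has $n+1$ variables and at most $mN_k(\Sigma)$ inequality constraints, and its coefficients have bit length $O(k(\log n + \log M))$, giving a total binary input length of $L = O\bigl(n \cdot mN_k(\Sigma) \cdot k(\log n + \log M)\bigr)$. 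Invoking the standard complexity of Karmarkar's algorithm, namely $O(p^{3.5} L)$ arithmetic operations for a linear program with $p$ variables and input length $L$, each LP is solved in $O\bigl(n^{3.5} \cdot nmN_k(\Sigma) \cdot k(\log n + \log M)\bigr) = O\bigl(m n^{4.5} k (\log n + \log M) N_k(\Sigma)\bigr)$ operations. Multiplying by the number $mN_k(\Sigma)$ of LPs yields a cost of $O\bigl(m^2 n^{4.5} k (\log n + \log M) N_k(\Sigma)^2\bigr)$ for iteration $k$.

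Finally I would sum the per-iteration bound over $k=0,1,\ldots,K-1$ to obtain $O\bigl(m^2 n^{4.5} (\log n + \log M) \sum_{k=0}^{K-1} k N_k(\Sigma)^2\bigr)$, and observe that the residual work---Step~\ref{algorithm1:search}, a single pass over $|E_K| \le N_K(\Sigma)$ points with constant-time oracle queries, and Step~\ref{algorithm1:retrieve}, a pointer-chasing retrieval of length $K$---contributes only lower-order terms. The main obstacle is really a bookkeeping one: picking the right form of Karmarkar's complexity so that the factor $n^{4.5}$ emerges from combining $n^{3.5}$ (Karmarkar iterations) with the extra factor of $n$ coming from the number of variables multiplied by the number of constraints in the input length $L$, and so that $k(\log n + \log M)$ enters via the bit growth of the iterates rather than through a hidden log factor. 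Once this accounting is fixed, the rest of the argument is routine.
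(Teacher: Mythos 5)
Your proposal is correct and follows essentially the same route as the paper's own proof: the same bound $\|x(k)\|_\infty \le (nM)^kM$ giving coordinate bit length $O(k(\log n+\log M))$, the same count of $m|E_k|\le mN_k(\Sigma)$ linear programs per iteration with input length $L=O(kmn(\log n+\log M)N_k(\Sigma))$, the same $O(n^{3.5}L)$ invocation of Karmarkar's algorithm yielding the $n^{4.5}$ factor, and the same observation that Steps~\ref{algorithm1:multiplication}, \ref{algorithm1:search}, and~\ref{algorithm1:retrieve} are lower order. No gaps to report.
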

\begin{proof}
We first show that the sizes of all data in Algorithm~\ref{algorithm1} are polynomial in the problem input size, which is polynomial in $K$, $n$, and $\log M$.
To see this, for any integer $k \ge 0$,
	\begin{align*}
	\|x(k)\|_\infty &= \max \{ \|A_ix(k-1)\|_\infty \mid A_i \in \Sigma\}   \le \max \{ \|A_i\|_\infty \mid A_i \in \Sigma\}  \cdot \|x(k-1)\|_\infty\\
	& \le (\max \{ \|A_i\|_\infty \mid A_i \in \Sigma \})^k  \cdot \|a\|_\infty  \le (nM)^kM.
	\end{align*}
Therefore, the size of $x(k)$ is $O(n \log\|x(k)\|_{\infty})=O(kn(\log n + \log M))$.

At Step~\ref{algorithm1:convexify} of iteration $k$, the number of operations of solving one linear program~\eqref{eq:separation} with $S= \cup_{i=1}^m F^{i}_{k}$ using Karmarkar's algorithm is $O(n^{3.5}L)$~\cite{karmarkar1984new}, where the input length 
$L = O(\sum_{i=1}^m |F^i_k| n\log\|x(k)\|_{\infty})=O(kmn(\log n+\log M)|E_k|)$.
Since we need to solve $m|E_k|$ linear programs, one for each point in $S$, the running time of Step~\ref{algorithm1:convexify} is $m|E_k| O(n^{3.5}L)= O(km^2n^{4.5}(\log n + \log M) |E_k|^2)$.
At iteration $k$, Step~\ref{algorithm1:multiplication} takes $O(mn^2)$ time, Step~\ref{algorithm1:search} takes $|E_K|$ queries to the value oracle of function $f$, 
and Step~\ref{algorithm1:retrieve} can be performed in $K$ steps if a $m$-ary tree is used to store the values of $x(k)$ for each $k$.
Therefore, the step with the dominating complexity is Step~\ref{algorithm1:convexify}, and the overall running time of Algorithm~\ref{algorithm1} is $O(m^2n^{4.5}(\log n + \log M) \sum_{k=0}^{K-1}k|E_k|^2)$.
Since $|E_k| \le N_k(\Sigma)$, the result follows.
\end{proof}

\subsubsection{Speeding up Algorithm~\ref{algorithm1} when $n=2$}
When $n=2$, there are many efficient algorithms to construct the convex hull of a set of points directly, such as Graham's scan and Jarvis's march~\cite{cormen2001introduction}.
Graham's scan constructs the convex hull of $l$ points on the plane in $O(l \log l)$ time~\cite{graham1972efficient}.
With a similar analysis as in Proposition~\ref{prop:runtime}, we have the result below.

\begin{proposition} \label{prop:runtime:n=2}
\sloppy When $n=2$ and Graham's scan is employed at Step~\ref{algorithm1:convexify} of Algorithm~\ref{algorithm1} to construct $E_{k+1}$, the running time of Algorithm~\ref{algorithm1} is $O(m \log m \sum_{k=0}^{K-1}N_k(\Sigma) + m\sum_{k=0}^{K-1}N_k(\Sigma) \log N_k(\Sigma))$. 
\end{proposition}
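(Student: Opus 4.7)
The plan is to repeat the complexity accounting of Proposition~\ref{prop:runtime} with $n=2$ and the linear programs at Step~\ref{algorithm1:convexify} replaced by Graham's scan. First I would record that, by the invariant that $E_k$ is the set of extreme points of $P_k(\Sigma,a)$, we have $|E_k| \le N_k(\Sigma)$ for every $k \in [0:K]$, and consequently the multiset $\cup_{i=1}^m F^i_k$ consumed at iteration $k$ contains at most $m|E_k| \le m N_k(\Sigma)$ planar points.

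Next I would bound the cost of one iteration $k$ step by step. Step~\ref{algorithm1:multiplication} computes $m$ products of $2\times 2$ matrices with $|E_k|$ vectors, costing $O(m|E_k|) = O(mN_k(\Sigma))$ time. Step~\ref{algorithm1:convexify} feeds the $O(mN_k(\Sigma))$ points of $\cup_{i=1}^m F^i_k$ to Graham's scan, which runs in $O(l \log l)$ time on $l$ input points~\cite{graham1972efficient}; hence this step takes
\[
O\bigl(mN_k(\Sigma) \log (mN_k(\Sigma))\bigr) \;=\; O\bigl(mN_k(\Sigma)\log m + mN_k(\Sigma) \log N_k(\Sigma)\bigr),
\]
which dominates Step~\ref{algorithm1:multiplication}. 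Summing the dominating term over $k = 0, 1, \ldots, K-1$ yields exactly the claimed bound.

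Finally I would check that the post-loop work contributes only lower-order terms, as in Proposition~\ref{prop:runtime}: Step~\ref{algorithm1:search} requires $|E_K| \le N_K(\Sigma)$ oracle queries, and Step~\ref{algorithm1:retrieve} walks up an $m$-ary tree in $O(K)$ time. Neither affects the leading complexity. The main (and essentially only) subtlety to flag is that because $n=2$ is a fixed constant, the bit-length factors $\log\|x(k)\|_\infty$ that appeared in Proposition~\ref{prop:runtime} are absorbed into constants under the random-access machine model: each Graham's-scan comparison between planar points reduces to a fixed number of arithmetic operations, each counted as $O(1)$. With this observation in place, the complexity assertion follows directly from the per-iteration accounting summed over $k$.
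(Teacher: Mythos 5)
Your proposal is correct and follows essentially the same route as the paper, which simply invokes "a similar analysis as in Proposition~\ref{prop:runtime}" with Graham's scan's $O(l\log l)$ bound applied to the $O(mN_k(\Sigma))$ points at each iteration; splitting $\log(mN_k(\Sigma))$ into $\log m + \log N_k(\Sigma)$ and summing over $k$ gives exactly the stated bound. Your remark that the bit-length factors disappear because arithmetic comparisons count as $O(1)$ in the random-access machine model is a fair observation consistent with the paper's stated model of computation.
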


\section{Polynomially Solvable Cases}
\label{sec:polytime}
In this section, we focus on discovering conditions on a set of matrices for which \prob{} is polynomially solvable.
Propositions~\ref{prop:runtime} and~\ref{prop:runtime:n=2} indicate that \prob{} is polynomially solvable if $N_k(\Sigma)$ is polynomial in $k$.
This motivated us to introduce the concept of the \polyv{} property in Section~\ref{sec:intro}.
%
%
Recall that a set of matrices $\Sigma$ has the \polyv{} property if $N_k(\Sigma) = O(k^d)$ for some constant $d$.
The following proposition gives the detailed time complexity of our algorithms for matrices with the \polyv{} property, following directly from Proposition~\ref{prop:runtime} and~\ref{prop:runtime:n=2}.
\begin{proposition} \label{prop:polytime}
If the set of matrices $\Sigma$ in \prob{} has the \polyv{} property and $N_k(\Sigma) = O(k^d)$ for some constant $d$, then \prob{} can be solved in $O(m^2n^{4.5}K^{2d+2}(\log n + \log M))$ time for general $n$ and in $O(mK^{d+1} (\log m + \log K))$ time when $n=2$.
\end{proposition}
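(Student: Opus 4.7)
The plan is to obtain Proposition~\ref{prop:polytime} as a direct corollary of Propositions~\ref{prop:runtime} and~\ref{prop:runtime:n=2}, by simply substituting the assumed polynomial bound $N_k(\Sigma) = O(k^d)$ into the two running-time expressions and simplifying the resulting power sums. There is no real obstacle here; the argument is essentially bookkeeping, and the main thing I want to do carefully is make sure the exponents in $K$ come out to $2d+2$ and $d+1$ respectively, and that the $\log$ factors match what is claimed.

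For the general-$n$ bound, I would start from the running time
\[
O\!\left(m^2 n^{4.5}(\log n + \log M)\sum_{k=0}^{K-1} k\, N_k(\Sigma)^2\right)
\]
given by Proposition~\ref{prop:runtime}. Under the hypothesis, there exist $\alpha > 0$ and $k_0$ such that $N_k(\Sigma) \le \alpha k^d$ for all $k \ge k_0$, so $N_k(\Sigma)^2 = O(k^{2d})$. Hence
\[
\sum_{k=0}^{K-1} k\, N_k(\Sigma)^2 \;=\; O\!\left(\sum_{k=0}^{K-1} k^{2d+1}\right) \;=\; O(K^{2d+2}),
\]
using the standard bound $\sum_{k=1}^{K-1} k^{p} = O(K^{p+1})$ for any fixed nonnegative integer $p$; the finitely many terms with $k < k_0$ contribute only a constant that is absorbed in the big-Oh. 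Substituting back gives the claimed $O(m^2 n^{4.5} K^{2d+2}(\log n + \log M))$.

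For the $n=2$ bound, I would start from Proposition~\ref{prop:runtime:n=2}, whose running time is
\[
O\!\left(m\log m \sum_{k=0}^{K-1} N_k(\Sigma) \;+\; m\sum_{k=0}^{K-1} N_k(\Sigma)\log N_k(\Sigma)\right).
\]
Again using $N_k(\Sigma) \le \alpha k^d$ for $k \ge k_0$, I would bound $\sum_{k=0}^{K-1} N_k(\Sigma) = O(K^{d+1})$ and $\log N_k(\Sigma) = O(\log K)$ (since $d$ is a constant and $\log \alpha$ is absorbed). This yields
\[
O\!\left(m\log m \cdot K^{d+1} + m K^{d+1}\log K\right) \;=\; O\!\left(m K^{d+1}(\log m + \log K)\right),
\]
as desired. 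The only step that warrants a sentence of justification is the passage from $N_k(\Sigma)\log N_k(\Sigma) = O(k^d \log(\alpha k^d))$ to $O(k^d \log K)$ inside the sum, which is immediate since $k \le K$ and $d$ is a constant. This completes the proof.
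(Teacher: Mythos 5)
Your proposal is correct and matches the paper's approach: the paper states that Proposition~\ref{prop:polytime} follows directly from Propositions~\ref{prop:runtime} and~\ref{prop:runtime:n=2}, and your substitution of $N_k(\Sigma)=O(k^d)$ with the power-sum bounds $\sum_{k=0}^{K-1}k^{2d+1}=O(K^{2d+2})$ and $\sum_{k=0}^{K-1}k^d=O(K^{d+1})$ is exactly the intended bookkeeping. The handling of the $\log N_k(\Sigma)=O(\log K)$ factor and the absorption of the finitely many terms below $k_0$ are both fine.
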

%
%
%

Thus our focus in this section is to discover conditions for a set of matrices to have the \polyv{} property.
We introduce additional notations that will be used in the rest of the paper. 
Given a set of matrices $\Sigma=\{A_1, A_2, \ldots, A_m\} \subseteq \Re^n$ and a vector $a \in \Re^n$, define
\begin{align}
X_k(\Sigma, a) &= \{x(k) \mid x(k)= T_{k-1}\cdots T_0 a, T_j \in \Sigma, j\in [0:k-1]\}\\
E_k(\Sigma, a) &= \ext{P_k(\Sigma, a)} 
\end{align}
for each integer $k\ge 0$.
Recall that $P_k(\Sigma, a) = \conv{X_k(\Sigma, a)}$, $N_k(\Sigma, a) = |E_k(\Sigma, a)|$, and $N_k(\Sigma) =\sup_{a \in \Re^n}\{N_k(\Sigma, a)\}$.
Since $P_k(\Sigma, a)$ is the convex hull of at most $m^k$ points, both $N_k(\Sigma, a)$ and $N_k(\Sigma)$ are well defined and bounded above by $m^k$.

Some obvious cases that have the \polyv{} property include a set $\Sigma$ of $m$ pairwise commuting matrices with constant $m$ (for which $N_k(\Sigma)=O(k^{m-1})$ since there are at most $\binom{k+m-1}{m-1}$ elements in $X_k(\Sigma,a)$), and a pair of projection matrices since there are at most $2k$ elements in $X_k(\Sigma,a)$.
%
%
%
%
%
%

\begin{proposition}\label{proposition_Problem_2_singular}
A set $\Sigma$ of $m$ matrices in $\Re^{n \times n}$ with at most one matrix with rank greater than one has the \polyv{} property and $N_k(\Sigma)=O(mk)$.
\end{proposition}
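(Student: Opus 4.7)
The plan is to leverage the rank-$\le 1$ structure of all but one matrix. Write $\Sigma = \{A_1, A_2, \ldots, A_m\}$ with $A_1$ the (possibly) high-rank matrix, and factor each remaining matrix as $A_i = u_i v_i^\top$ for $i \in [2:m]$ (permitting $u_i$ or $v_i$ to be zero when $A_i$ is the zero matrix). The key observation is that once any rank-$\le 1$ matrix appears as a factor in a long product, it collapses the running state onto a one-dimensional subspace, and any subsequent applications of $A_1$ simply scale a fixed vector determined solely by the position and identity of the collapsing factor.

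Concretely, I would classify each product $T_{k-1}\cdots T_0 a \in X_k(\Sigma,a)$ by the largest index $j^*$ for which $T_{j^*} \neq A_1$. If no such index exists, the product equals $A_1^k a$, contributing at most one distinguished point. Otherwise, writing $T_{j^*} = A_i$ and setting $l = k-1-j^*$, the maximality of $j^*$ forces $T_{j^*+1} = \cdots = T_{k-1} = A_1$, so
\[
T_{k-1}\cdots T_0 a \;=\; \bigl(v_i^\top T_{j^*-1}\cdots T_0 a\bigr)\, A_1^l u_i,
\]
which is a scalar multiple of the fixed direction $A_1^l u_i$, regardless of what $T_0,\ldots,T_{j^*-1}$ are. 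Consequently $X_k(\Sigma,a)$ is contained in $\{A_1^k a\} \cup \bigcup_{(i,l)\in[2:m]\times[0:k-1]} \ell_{i,l}$, where $\ell_{i,l}$ denotes the line through the origin in direction $A_1^l u_i$; at most $(m-1)k$ such lines are involved.

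For the extreme-point count, observe that $\conv{X_k(\Sigma,a) \cap \ell_{i,l}}$ is a line segment, so any extreme point of $P_k(\Sigma,a)$ lying on $\ell_{i,l}$ must be one of its two endpoints. Summing over the $(m-1)k$ lines and adding at most one more potential extreme point for $A_1^k a$ yields $N_k(\Sigma,a) \le 2(m-1)k + 1$ uniformly in $a$, whence $N_k(\Sigma) = O(mk)$. The argument is almost entirely structural, with no real obstacle; the only mild bookkeeping is checking that degeneracies (a zero matrix in $\Sigma$, vanishing $A_1^l u_i$, or collinearity of several direction vectors) only tighten the bound rather than break it, which is immediate from the set-containment formulation.
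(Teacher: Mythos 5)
Your proof is correct, but it takes a different route from the paper's. The paper argues recursively at the level of polytopes: writing $P_{k+1}(\Sigma,a)=\conv{\cup_{i=1}^m A_iP_k(\Sigma,a)}$ and observing that the image of any polytope under a rank-one map is a segment (hence has at most two extreme points), it gets the one-step recursion $N_{k+1}(\Sigma,a)\le N_k(\Sigma,a)+2(m-1)$ and concludes $N_k(\Sigma)=O(mk)$ by unrolling. You instead work globally on the reachable set: factoring each low-rank matrix as $u_iv_i^{\top}$ and splitting every word at the last non-$A_1$ factor, you show $X_k(\Sigma,a)$ is contained in $\{A_1^k a\}$ together with at most $(m-1)k$ lines through the origin spanned by the vectors $A_1^l u_i$, and then use that an extreme point of $\conv{X_k(\Sigma,a)}$ lying on such a line must be an endpoint of the corresponding segment, giving $N_k(\Sigma,a)\le 2(m-1)k+1$. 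The two arguments exploit the same rank-one collapse and yield essentially the same bound, but the paper's recursion is shorter and needs no factorization or case analysis of degeneracies, while your word decomposition gives a stronger structural conclusion (an explicit union of $O(mk)$ lines containing all reachable states, independent of the prefix of the switching sequence), which localizes where extreme points can appear and could be exploited algorithmically.
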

\begin{proof}
Let $\Sigma=\{A_1, \ldots, A_m\}$.
With loss of generality, assume that no $A_i$ is the zero matrix, and $A_1, A_2, \ldots, A_{m-1}$ are of rank one.
Then for any $a \in \Re^n$ the set $A_iP_k(\Sigma, a)$ contains at most two extreme points for $i=1,\ldots, m-1$. 
For each integer $k \ge 0$, $P_{k+1}(\Sigma, a) = \conv{\cup_{i=1}^m A_iP_k(\Sigma, a)}$, so $N_{k+1}(\Sigma,a) \le \sum_{i=1}^m|\ext{A_iP_k(\Sigma, a)}| \le 2(m-1) + N_k(\Sigma,a)$.
Then $N_{k+1}(\Sigma,a) \le N_0(\Sigma, a) + 2k(m-1)$, so $N_k(\Sigma) = O(mk)$.
\end{proof}

\begin{proposition}\label{proposition_Problem_2_share_eigenvector}
A set $\Sigma$ of two $2 \times 2$ matrices that share at least one common eigenvector has the \polyv{} property and $N_k(\Sigma)=O(k)$.
\end{proposition}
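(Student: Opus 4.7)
The plan is to exploit the common eigenvector to simultaneously upper-triangularize both matrices in a suitable basis, which forces the second coordinate of every reachable state to lie in a small set, and then invoke a simple planar geometry argument to bound the number of extreme points.

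First, let $v$ be a common eigenvector of the two matrices $A$ and $B$ with $Av = \lambda v$ and $Bv = \mu v$. I would pick any $w \in \Re^2$ such that $\{v,w\}$ forms a basis of $\Re^2$ and form the change-of-basis matrix $P = [v \mid w]$. Then both $P^{-1}AP$ and $P^{-1}BP$ are upper triangular, since $v$ maps to an eigenvector of itself under each matrix; call their bottom-right entries $\alpha$ and $\beta$. Working in this new basis, write $x(k) = (y(k), z(k))^\top$. The upper triangular structure means the second coordinate evolves autonomously: multiplying by $A$ scales it by $\alpha$, and multiplying by $B$ scales it by $\beta$. Consequently, after $k$ steps the second coordinate lies in the set $\{z(0)\alpha^{j}\beta^{k-j} : j=0,1,\ldots,k\}$, which has at most $k+1$ distinct values regardless of the sequence chosen.

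The key geometric observation is that any finite set of points in the plane whose second coordinates take at most $s$ distinct values has a convex hull with at most $2s$ extreme points. Indeed, for any fixed second coordinate $c$, all points with that second coordinate lie on the horizontal line $\{z = c\}$, and among them at most two (the ones with maximum and minimum first coordinate) can be extreme, since every other such point lies on the segment joining them. Applied in the new basis, this bounds the number of extreme points of $P_k(\Sigma, a)$ by $2(k+1)$. Because the number of extreme points of a convex set is invariant under invertible linear transformations, the same bound holds in the original basis, and taking supremum over $a$ yields $N_k(\Sigma) = O(k)$.

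I do not foresee any substantial obstacle; the proof is essentially a one-paragraph argument once the right change of basis is in place. The only mild care is handling degenerate cases such as $a$ being parallel to $v$ (so the trajectory stays on a line and the bound becomes trivial) or $\alpha\beta = 0$ (which only reduces the number of distinct second coordinates), but these make the bound better, not worse.
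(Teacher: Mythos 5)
Your proof is correct and takes essentially the same route as the paper: triangularize both matrices via the common eigenvector, observe that the coordinate along the complementary basis direction after $k$ steps depends only on how many times each matrix was used (at most $k+1$ possible values), and conclude that each such group contributes at most two extreme points, giving $N_k(\Sigma)\le 2(k+1)$. Your packaging --- a general (not necessarily orthonormal) change of basis plus the ``at most two extreme points per horizontal line'' observation --- is a slightly cleaner version of the paper's argument that each matrix in $\Pi_p$ is a convex combination of the two with extremal off-diagonal entries, and it also covers the commuting (two common eigenvectors) case uniformly rather than as a separate case.
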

\begin{proof}
If matrices $A$ and $B$ in $\Sigma$ share two eigenvectors, then they commute and there are at most $k+1$ different points in $X_k(\Sigma, a)$ for any $a$.
Now suppose that $A$ and $B$ in $\Sigma$ share exactly one eigenvector $q_1$. 
Then $q_1$ must be a real vector. 
%
%
Assume the corresponding eigenvalues of $q_1$ in $A$ and $B$ are $\lambda_{11}$ and $\mu_{11}$, respectively. 
Since $q_1$ is a real vector, $\lambda_{11}$ and $\mu_{11}$ are both real-valued. 
Without loss of generality, assume $\|q_1\|_2=1$. 
Let $q_2 \in \Re^2$ be a unit vector orthogonal to $q_1$. 
Consider the vector $Aq_2$. Since $q_1$ and $q_2$ form a basis of $\Re^2$, we have $Aq_2 = \lambda_{12} q_1 + \lambda_{22} q_2$ for some $\lambda_{12}, \lambda_{22} \in \Re$.
Similarly, we have $Bq_2=\mu_{12}q_1 + \mu_{22}q_2$ for some $\mu_{12}, \mu_{22}\in \Re$. 
Let $Q=\begin{bmatrix}
q_1 & q_2
\end{bmatrix},
\;
\Lambda=\begin{bmatrix}
\lambda_{11} & \lambda_{12}\\
0 & \lambda_{22}
\end{bmatrix},
\;
M=\begin{bmatrix}
\mu_{11} & \mu_{12}\\
0 & \mu_{22}
\end{bmatrix}$.
We have $\Lambda$ and $M$ as real matrices, $QQ^{\top}=I$, $A=Q\Lambda Q^{\top}$, and $B=QMQ^{\top}$.

\sloppy Any product of $k$ matrices with $A$ and $B$ can be written in the form of
$A^{l_1}B^{m_1}A^{l_2}B^{m_2}\ldots A^{l_s}B^{m_s}$
with $l_1, m_s \in \Ne$, $l_2,\ldots,l_s, m_1, \ldots, m_{s-1} >0$ for some $s\ge 1$, and $\sum_{j=1}^s(l_j + m_j) = k$.
We simplify the product as follows.
\begin{align*}
~&A^{l_1}B^{m_1}A^{l_2}B^{m_2}\ldots A^{l_s}B^{m_s} \\
=~&Q
\begin{bmatrix}
\lambda_{11}^{l_1+\ldots+l_s}\mu_{11}^{m_1+\ldots+m_s} & *\\
0 & \lambda_{22}^{l_1+\ldots+l_s}\mu_{22}^{m_1+\ldots+m_s}
\end{bmatrix}
Q^{\top}\\
=~&Q
\begin{bmatrix}
\lambda_{11}^{p}\mu_{11}^{k-p} & *\\
0 & \lambda_{22}^{p}\mu_{22}^{k-p}
\end{bmatrix}
Q^{\top},
\end{align*}
where $p=l_1+\ldots+l_s$ and $*$ represents some real number. 
Let $\Pi_p$ be the set of all matrices in the form of $\begin{bmatrix}
\lambda_{11}^{p}\mu_{11}^{k-p} & *\\
0 & \lambda_{22}^{p}\mu_{22}^{k-p}
\end{bmatrix}$
calculated from a product of $k$ matrices with $p$ matrix $A$'s and $(k-p)$ matrix $B$'s.
The set $\Pi_0$ contains one matrix in the form of
$\begin{bmatrix}
\mu_{11}^{k} & *\\
0 & \mu_{22}^{k}
\end{bmatrix}$. 
Call this matrix $C_0$.
The set $\Pi_k$ contains one matrix in the form of 
$\begin{bmatrix}
\lambda_{11}^{k} & *\\
0 & \lambda_{22}^{k}
\end{bmatrix}$.
Call this matrix $C_k$. 
For $1\le p\le k-1$, any matrix in $\Pi_p$ can be represented as a convex combination of two matrices in $\Pi_p$, the ones with the smallest and largest $*$ entries.
Call these two matrices $C_p$ and $D_p$.
Then for any $p \in [1:k-1]$, the vector 
$x(k)= A^{l_1}B^{m_1}A^{l_2}B^{m_2}\ldots A^{l_s}B^{m_s} a$
with $\sum_{j=1}^s l_j = p$ can be represented by a convex combination of $C_pa$ and $D_pa$. 
Hence $P_k(\Sigma, a) = \conv{\{C_0a, C_1a, D_1a, C_2a, D_2a, \ldots, C_ka\}}$.
%
%
%
Therefore $N_k(\Sigma, a) \leq 2k$ and $N_k(\Sigma)=O(k)$. 
\end{proof}

\begin{remark} $\;$
Each right stochastic matrix has an eigenvector $(1, 1)^{\top}$. Therefore, any pair of $2 \times 2$ right stochastic matrices has the \polyv{} property and the corresponding problem \prob{} is polynomially solvable. 
\end{remark}

Finally we present a lemma showing that the \polyv{} property is invariant under any similarity transformation.
%
\begin{lemma} \label{lemma:equivalentpairs}
A finite set of $n \times n$ matrices $\Sigma$ has the \polyv{} property if and only if $S\Sigma S^{-1}$ has the \polyv{} property for any nonsingular real matrix $S$.
\end{lemma}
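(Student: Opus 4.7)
The plan is to show the stronger equality $N_k(S\Sigma S^{-1}) = N_k(\Sigma)$ for every $k$, from which invariance of the oligo-vertex property (which depends only on the growth rate of $N_k$) is immediate. The key observation is that conjugation by $S$ commutes with products of matrices, so the trajectories of the conjugated system are just the image under $S$ of the trajectories of the original system starting from a transformed initial vector.

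First I would write out the telescoping identity
\[
(SA_{i_{k-1}}S^{-1})(SA_{i_{k-2}}S^{-1})\cdots(SA_{i_0}S^{-1}) \;=\; S\,(A_{i_{k-1}}A_{i_{k-2}}\cdots A_{i_0})\,S^{-1},
\]
for any choice of indices $i_0,\ldots,i_{k-1} \in [1:m]$. Applying both sides to an arbitrary initial vector $a' \in \Re^n$ and letting $a := S^{-1}a'$, this immediately yields the set equality
\[
X_k(S\Sigma S^{-1},\,a') \;=\; S\cdot X_k(\Sigma,\,S^{-1}a').
\]

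Next I would invoke the fact that any nonsingular linear map $x \mapsto Sx$ is a homeomorphism that sends convex hulls to convex hulls and restricts to a bijection between the extreme points of a polytope and those of its image. Consequently,
\[
P_k(S\Sigma S^{-1},\,a') \;=\; S\cdot P_k(\Sigma,\,S^{-1}a'), \qquad N_k(S\Sigma S^{-1},\,a') \;=\; N_k(\Sigma,\,S^{-1}a').
\]
Finally, since the map $a' \mapsto S^{-1}a'$ is a bijection of $\Re^n$, taking the supremum over $a' \in \Re^n$ on the left is the same as taking the supremum over $a \in \Re^n$ on the right, giving $N_k(S\Sigma S^{-1}) = N_k(\Sigma)$ for every $k$. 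Hence $N_k(S\Sigma S^{-1}) \le \alpha k^d$ for $k \ge k_0$ if and only if $N_k(\Sigma)\le \alpha k^d$ for $k \ge k_0$, and the two directions of the ``if and only if'' follow (the reverse direction by applying the forward direction to the set $S\Sigma S^{-1}$ and the matrix $S^{-1}$).

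I do not anticipate a real obstacle here; the entire argument is a bookkeeping exercise once the telescoping identity is written down. The only point that deserves a sentence of justification is that $S$ induces a bijection between $\ext{P_k(\Sigma,S^{-1}a')}$ and $\ext{S\cdot P_k(\Sigma,S^{-1}a')}$, which follows from $S$ being an invertible affine map.
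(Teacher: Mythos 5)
Your proposal is correct and follows essentially the same route as the paper: both use the telescoping identity $S T_{k-1}\cdots T_0 S^{-1} = (ST_{k-1}S^{-1})\cdots(ST_0S^{-1})$ to identify $P_k(S\Sigma S^{-1}, Sa)$ with $S\,P_k(\Sigma,a)$, note that the nonsingular map $S$ preserves the number of extreme points, and then take the supremum over initial vectors (a bijection of $\Re^n$) to conclude $N_k(S\Sigma S^{-1}) = N_k(\Sigma)$. No gaps; the argument matches the paper's proof up to the cosmetic choice of parametrizing the initial vector by $a' = Sa$ rather than $a$.
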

\begin{proof}
 It suffices to show that $N_k(\Sigma) = N_k(S\Sigma S^{-1})$.
	We claim that  $P_k(\Sigma, a) =  P_k(S\Sigma S^{-1}, Sa)$ for any $a \in \Re^n$.
	To see this, note that any extreme point $p$ of $P_k(\Sigma, a)$ can be written as $p=T_{k-1}T_{k-2}\cdots T_{0}a$ with $T_j \in \Sigma$ or $j\in [0:k-1]$.
	Then \[p=  T_{k-1}T_{k-2}\cdots T_{0}a =  S^{-1} (ST_{k-1}S^{-1})(ST_{k-2}S^{-1}) \cdots (ST_{0}S^{-1})Sa.\]
We have $p \in S^{-1} P_k(S\Sigma S^{-1}, Sa)$. 
Therefore, $P_k(\Sigma, a) \subseteq S^{-1} P_k(S\Sigma S^{-1}, Sa)$. 
\sloppy Similarly, we can show that $P_k(\Sigma, a) \supseteq S^{-1} P_k(S\Sigma S^{-1}, Sa)$, so $P_k(\Sigma, a) = S^{-1} P_k(S\Sigma S^{-1}, Sa)$. 
Since $S$ is nonsingular, the number of extreme points of $P_k(\Sigma, a)$ equals the number of extreme points of $P_k(S\Sigma S^{-1}, Sa)$, i.e., $N_k(\Sigma, a) = N_k(S\Sigma S^{-1}, Sa)$.
Thus $N_k(\Sigma) = \sup_{a \in \Re^n} N_k(\Sigma, a) 	= \sup_{a \in \Re^n} N_k(S\Sigma S^{-1}, Sa) \le	N_k(S\Sigma S^{-1})$.
By symmetry, we can show that $N_k(S\Sigma S^{-1}) \le N_k(\Sigma)$. 
Therefore, $N_k(\Sigma) = N_k(S\Sigma S^{-1})$.
\end{proof}

\section{The $2 \times 2$ Binary Matrices}
\label{sec:binary}
%

Our main result in this section is the following theorem.
\begin{theorem} \label{thm:binarymatrices}
A pair of $2 \times 2$ binary matrices has the \polyv{} property.
\end{theorem}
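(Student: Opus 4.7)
The plan is a case analysis on $\{A,B\}$ refined by the ranks of the two matrices, then a similarity reduction among the invertible pairs, and finally a direct geometric argument for a short list of residual pairs. Among the $16$ binary $2\times 2$ matrices, $10$ have rank at most one (the zero matrix plus the nine nonzero matrices with linearly dependent rows) and $6$ are invertible, namely the identity $I$, the shears $U = \begin{pmatrix}1&1\\0&1\end{pmatrix}$ and $L = \begin{pmatrix}1&0\\1&1\end{pmatrix}$, the swap $P = \begin{pmatrix}0&1\\1&0\end{pmatrix}$, and the Fibonacci-type matrices $F_1 = \begin{pmatrix}1&1\\1&0\end{pmatrix}$ and $F_2 = \begin{pmatrix}0&1\\1&1\end{pmatrix}$. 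If at most one of $A, B$ has rank greater than one, Proposition~\ref{proposition_Problem_2_singular} already yields $N_k(\Sigma) = O(k)$, so we may assume both matrices are drawn from this list of six.

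For invertible pairs I would first dispose of the easy subcases. A pair containing $I$ is handled by Proposition~\ref{proposition_Problem_2_share_eigenvector}, since every vector is an eigenvector of $I$. Conjugation by $P$ is a simultaneous similarity swapping $U \leftrightarrow L$ and $F_1 \leftrightarrow F_2$, so by Lemma~\ref{lemma:equivalentpairs} several pairs collapse to a single equivalence class. For each remaining inequivalent pair I would check explicitly whether the two matrices share a real eigenvector; for example, any two upper-triangular matrices in the list share $(1,0)^{\top}$, and both $F_1$ and $P$ admit $(1,1)^{\top}$. Whenever a common eigenvector exists, Proposition~\ref{proposition_Problem_2_share_eigenvector} gives $N_k(\Sigma) = O(k)$. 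A short computation leaves a finite list of residual pairs without a shared eigenvector, the hardest being $\{U, L\}$.

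For every residual pair the matrices are nonnegative and preserve the first quadrant $\mathcal{Q}_1$. Decomposing $a$ into its projections onto the four quadrants and using the obvious symmetries under coordinate sign flips, I would reduce to bounding $N_k(\Sigma, a)$ when $a \in \mathcal{Q}_1$. For such $a$ the direction of any product $T_{k-1}\cdots T_0 a$ is obtained by applying the M\"obius transformation induced by the product to the direction of $a$; for $\{U, L\}$ this coincides with one descent in the Stern--Brocot tree. An extreme point of $P_k(\Sigma, a)$ must maximize some linear functional $c^{\top} x$ over $X_k(\Sigma, a)$, so the set of directions $c$ is partitioned into cones within each of which the optimum is attained by a single greedy word in $A$ and $B$. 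The plan is to show that the number of such cones is $O(k^d)$ for some constant $d$, which delivers $N_k(\Sigma) = O(k^d)$ and hence the \polyv{} property.

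The main obstacle is precisely this combinatorial bound for $\{U, L\}$, since this pair freely generates a submonoid of $\mathrm{SL}_2(\Ze)$ with $|X_k(\Sigma, a)| = 2^k$, so a naive count is exponential. I expect to prove the bound by exhibiting a natural linear order on the greedy words (for instance, the lexicographic order of their Stern--Brocot addresses) and showing that consecutive extreme points differ by a single Stern--Brocot move, so that the extreme points of $P_k(\Sigma, a)$ form a monotone chain of length $O(k)$. Passing from $N_k(\Sigma, a)$ to $N_k(\Sigma)$ introduces further subtlety when $a$ lies on the boundary of $\mathcal{Q}_1$, which will be handled by a continuity argument.
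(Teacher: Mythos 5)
Your overall skeleton (discard rank-$\le 1$ and identity cases, use Lemma~\ref{lemma:equivalentpairs} with conjugation by the swap matrix to merge pairs, then treat the survivors individually) matches the paper's strategy, but the step that is supposed to shrink the survivor list collapses. None of the essential invertible pairs actually shares a real eigenvector: your specific claim that $F_1=\begin{pmatrix}1&1\\1&0\end{pmatrix}$ and $P$ admit $(1,1)^{\top}$ is false, since $F_1(1,1)^{\top}=(2,1)^{\top}$, and a direct check of all ten pairs drawn from $\{P,U,L,F_1,F_2\}$ shows the shared-eigenvector proposition eliminates nothing. So the ``residual list'' is not a short list with $\{U,L\}$ as the hardest case; it consists of all five similarity classes, represented in the paper by $\{P,U\}$, $\{P,F_1\}$, $\{U,L\}$, $\{F_1,F_2\}$, $\{U,F_1\}$, each of which the paper handles with a separate, nontrivial argument. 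In particular the genuinely hard pair is $\{P,U\}$ (the swap together with a shear), for which the paper only obtains $N_k=O(k^4)$ and which your plan never addresses: the M\"obius/Stern--Brocot picture you describe is tailored to the nonnegative pair $\{U,L\}$ and does not apply to products involving $P$ acting on vectors outside the first quadrant.

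A second gap is the claimed reduction to $a\in\mathcal{Q}_1$. Decomposing $a$ into quadrant projections does not control $N_k(\Sigma,a)$ (the map $a\mapsto N_k(\Sigma,a)$ is not subadditive over such a decomposition), and the sign-flip ``symmetries'' $(x_1,x_2)\mapsto(x_1,-x_2)$ do not commute with these matrices --- only the central symmetry $a\mapsto -a$ is available. The case $a\in\inte{\mathcal{Q}_2}\cup\inte{\mathcal{Q}_4}$ is genuinely different and is where most of the paper's work lies: for $\{P,U\}$ the bound degrades from $O(k^2)$ to $O(k^4)$, and even for $\{U,L\}$ the paper needs a separate recursion (tracking when trajectories first enter $\mathcal{Q}_1\cup\mathcal{Q}_3$) to get $O(k^2)$ rather than the $O(k)$ you project. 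Finally, even in the one case you do engage with, the key combinatorial claim --- that the optimal greedy words form a monotone Stern--Brocot chain so that only $O(k)$ cones of objective directions arise --- is stated as an expectation rather than proved, and it is exactly the kind of statement (a per-quadrant bound on $|E^i_k|$ via an explicit exchange argument) that the paper has to establish by hand, pair by pair. As it stands the proposal does not yield the theorem.
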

\noindent The seemingly innocent looking statement above is the most difficult to prove in this paper.
In fact, we are unable to provide a unified argument for all $2\times 2$ binary matrices.
This is not too surprising, however, since to the best of our knowledge there is no unified argument to show that any pair of $2 \times 2$ binary matrices has the finiteness property either~\cite{jungers2008}.
We hope that the techniques we develop in this paper can be useful in proving the \polyv{} property for other matrices in the future.

There are a total of $16$ $2 \times 2$  binary matrices, resulting in a total of $120$ different pairs of $2 \times 2$ binary matrices.
To prove Theorem~\ref{thm:binarymatrices}, we first show that the result holds for most of the 120 pairs, and then provide separate proofs for each of the remaining pairs.
Among the $16$ binary matrices, one matrix has rank zero, nine matrices have rank one, and six matrices have rank two.
The pair of matrices has the \polyv{} property if one matrix is the zero or identity matrix.
According to Proposition~\ref{proposition_Problem_2_singular}, the pair of matrices has the \polyv{} property if one matrix is singular.
Therefore, only the following five binary matrices of rank two give rise to interesting pairs:
\begin{align*}
  A_1  = \begin{bmatrix}
            0 & 1 \\
            1 & 0
          \end{bmatrix},
	~~A_2  = \begin{bmatrix}
            1 & 1 \\
            0 & 1
          \end{bmatrix},
  ~~A_3  = \begin{bmatrix}
            1 & 0 \\
            1 & 1
          \end{bmatrix},
	~~A_4  = \begin{bmatrix}
            1 & 1 \\
            1 & 0
          \end{bmatrix},
  ~~A_5  = \begin{bmatrix}
            0 & 1 \\
            1 & 1
          \end{bmatrix}.
\end{align*}


\noindent The five matrices above give rise to ten different pairs of binary matrices.
Observe that
\begin{align*}
  A_1A_1A^{-1}_1 = A_1, \; A_1A_2A^{-1}_1 = A_3, \;  A_1A_4A^{-1}_1 = A_5, \; A_2A_5A^{-1}_2 = A_4.
\end{align*}
Then by Lemma \ref{lemma:equivalentpairs}, we can group the ten pairs of matrices into the following five clusters:
\begin{enumerate}
  \item $ \{A_1, A_2\}, \{A_1, A_3\}$
  \item $ \{A_1, A_4\}, \{A_1, A_5\}$
  \item $ \{A_2, A_3\}$
  \item $ \{A_4, A_5\}$
  \item $ \{A_2, A_4\}, \{A_3, A_5\}, \{A_2, A_5\}, \{A_3, A_4\}$,
\end{enumerate}
and it suffices to show that one pair of matrices within each cluster has the \polyv{} property.
In the rest of this section, we are going to show separately that each of the following five pairs of matrices has the \polyv{} property.
\[\Sigma_1=\{A_1, A_2\}, \Sigma_2=\{A_1, A_4\}, \Sigma_3=\{A_2, A_3\}, \Sigma_4=\{A_4, A_5\}, \Sigma_5=\{A_2, A_4\}.\]

\noindent We first present in the table below a complete description of how $N_k(\Sigma, a)$ grows with $k$ for the five pairs of matrices, according to the location of the initial vector $a$.
\begin{table}[htb]%
\centering
\begin{tabular}{cccccc}
\hline
 & $\Sigma_1$ & $\Sigma_2$ & $\Sigma_3$ & $\Sigma_4$ & $\Sigma_5$\\
\hline
$a \in \mathcal{Q}_1 \cup \mathcal{Q}_3$ & $O(k^2)$ & $O(k)$ & $O(k)$ & $O(k)$  & $O(k)$\\
$a \in \inte{\mathcal{Q}_2} \cup \inte{\mathcal{Q}_4}$ & $O(k^4)$ & $O(k)$ & $O(k^2)$ & $O(k^2)$ & $O(k^2)$\\
\hline
\end{tabular}
\caption{The number of extreme points $N_k(\Sigma,a)$}
\label{table:complexity}
\end{table}

\noindent The results in Table~\ref{table:complexity} show that the number of extreme points of $P_k(\Sigma, a)$ grows linearly with $k$ when the initial vector is in the first or the third quadrant for most pairs of binary matrices except $\Sigma_1$.

\begin{example}
Figure~\ref{fig:Nk_k} illustrates how the number of extreme points $N_k(\Sigma_1, a)$ changes with $k$ given different initial vector $a$'s.
For the chosen $a$'s, the growth is at most linear in $k$ for $k\le 40$.

\begin{figure}[ht]
\centering
\includegraphics[scale=0.5]{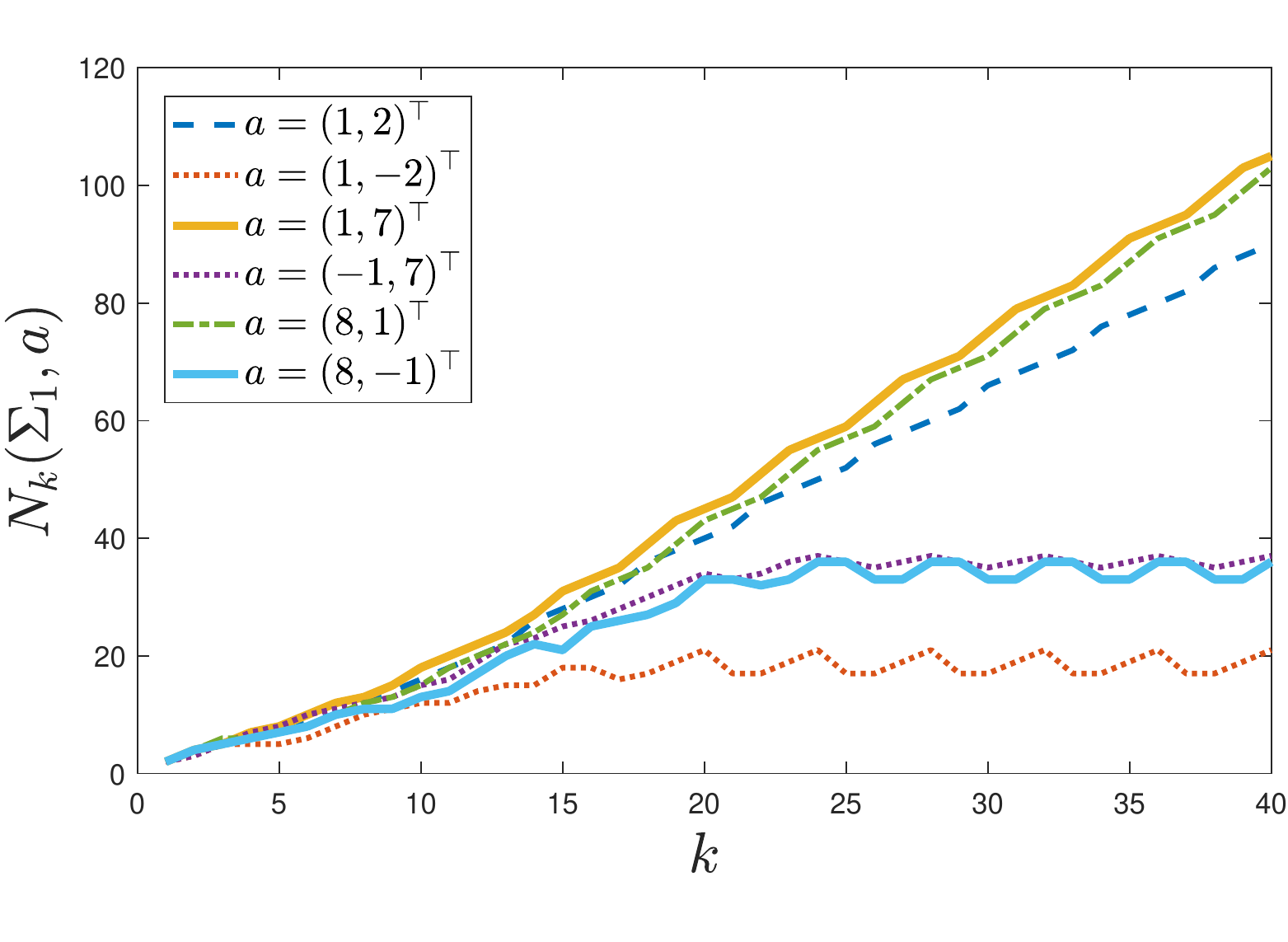}
\caption{The number of extreme points $N_k(\Sigma_1,a)$ given different initial vector $a$'s.}\label{fig:Nk_k}
\end{figure}
\end{example}

To prove the results in Table~\ref{table:complexity}, we first introduce a few notations that will be used in the rest of this section.
Given a pair $\Sigma$ of matrices and a vector $a \in \Re^2$, we divide the set of extreme points $E_k(\Sigma, a)$ of $P_k(\Sigma, a)$ into five groups.
\begin{definition} \label{def:Ei}
Let $E^i_k(\Sigma, a)$ be the set of extreme points of $P_k(\Sigma, a)$ that are maximizers of the linear program $\max\{c x \mid x \in P_k(\Sigma, a)\}$ for some $c \in \inte{\mathcal{Q}_i}$, for $i=1,2,3,4$.
Let $E^0_k(\Sigma,a)$ be the set of extreme points of $P_k(\Sigma, a)$ that are maximizers of the linear programs $\max\{c x \mid x\in P_k(\Sigma, a)\}$ where $c \in \{(1, 0), (0, 1), (-1, 0), (0,-1)\}$.
\end{definition}
\noindent Then
\begin{equation} \label{eq:extremepoints:bounds}
E_k(\Sigma, a) = \cup_{i=0}^4 E^i_k(\Sigma, a) \text{ and } N_k(\Sigma, a) \le \sum_{i=0}^4 |E^i_k(\Sigma,a)|.
\end{equation}

\begin{example}
Figure~\ref{fig:Ek} illustrates the polytopes $P_k(\Sigma_3, a)$ and the sets of extreme points $E^i_k(\Sigma_3, a)$ for $i\in [0:4]$ with $a=(2, 1)^{\top}$, for $k=5$ and $k=7$.

\begin{figure}[ht]
\centering
\subcaptionbox{$P_5(\Sigma_3,(2,1)^{\top})$ and $E^i_5(\Sigma_3,(2,1)^{\top})$}
{\includegraphics[scale=0.38]{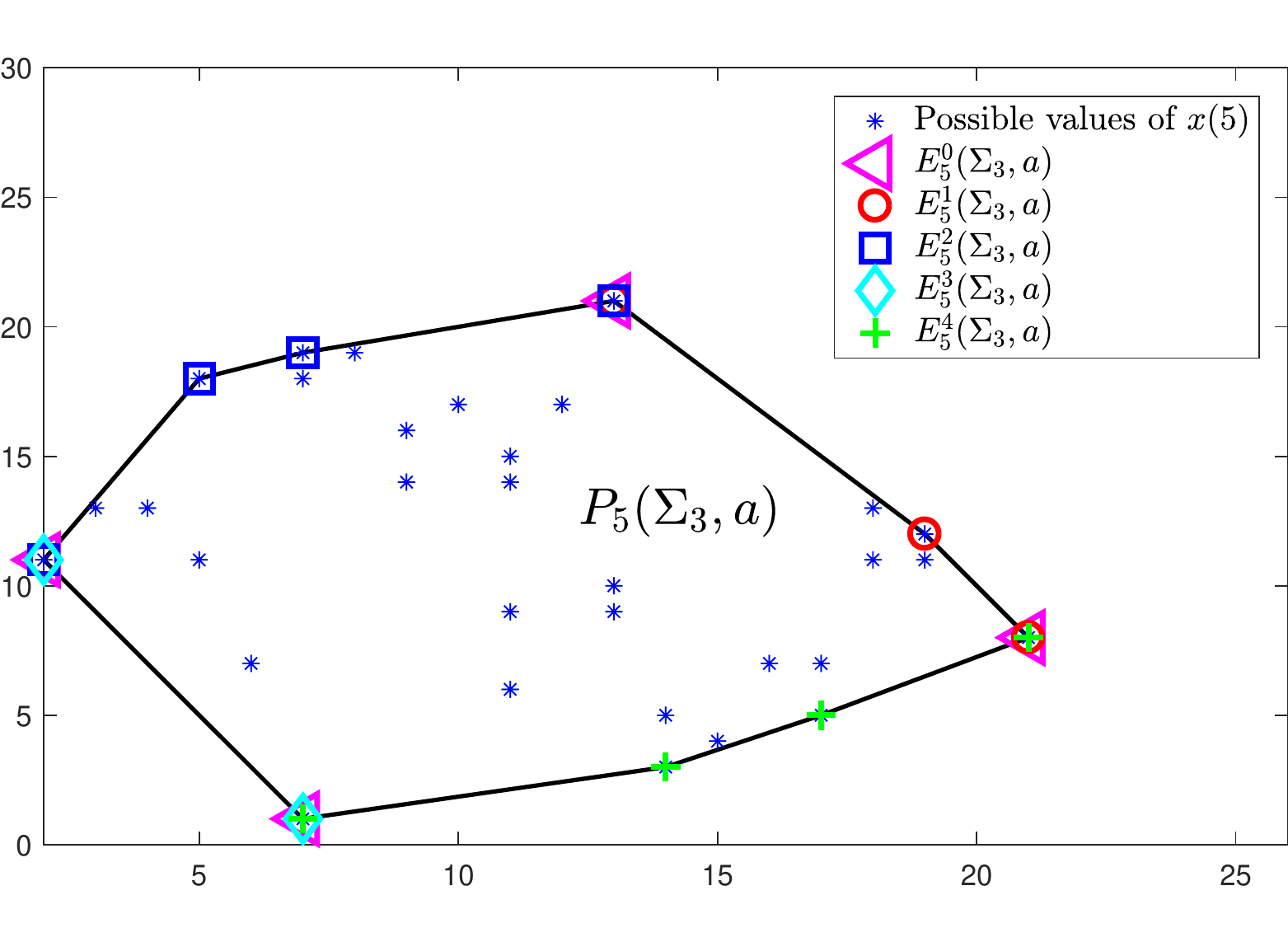}}
\hspace{2mm}
\subcaptionbox{$P_{7}(\Sigma_3,(2,1)^{\top})$ and $E^i_{7}(\Sigma_3,(2,1)^{\top})$}
{\includegraphics[scale=0.38]{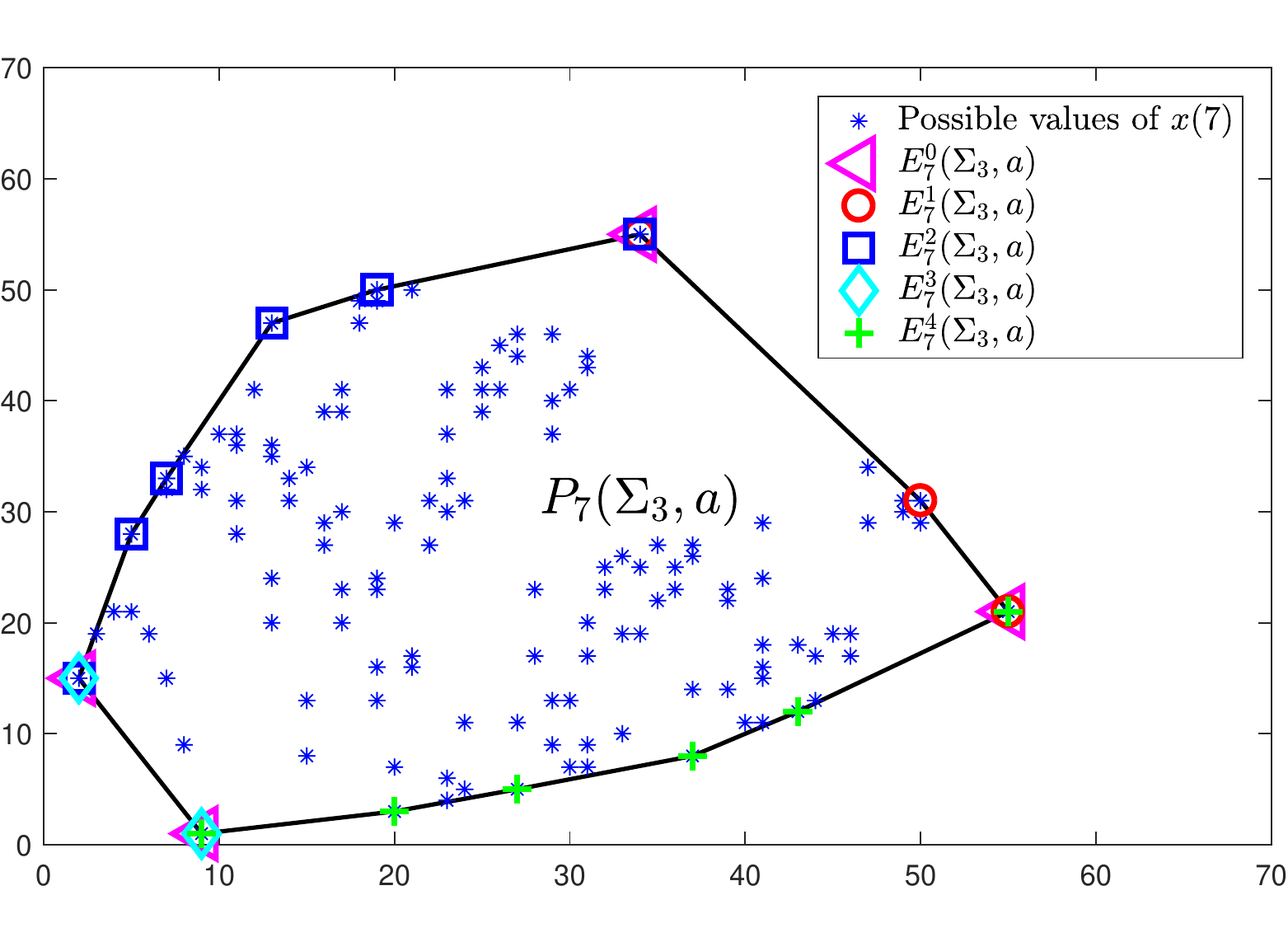}}
\caption{Examples of polytopes $P_k(\Sigma_3, a)$ and associated sets of extreme points $E^i_k(\Sigma_3,a)$ for $i \in [0:4]$.}\label{fig:Ek}
\end{figure}

\end{example}

\subsection{$\Sigma_1=\{A_1, A_2\}$}
\begin{proposition} \label{prop:A1A2}
\sloppy The pair $\Sigma_1$ has the \polyv{} property and $N_k(\Sigma_1)=O(k^4)$.
\end{proposition}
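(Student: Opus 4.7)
The plan is to establish the two case-wise bounds in Table~\ref{table:complexity} separately and then take the supremum. Since the map $x \mapsto -x$ commutes with both $A_1$ and $A_2$, we have $N_k(\Sigma_1, a) = N_k(\Sigma_1, -a)$, so it suffices to treat $a \in \mathcal{Q}_1$ and $a \in \inte{\mathcal{Q}_2}$ (initial vectors on a coordinate axis reduce to these after a couple of multiplications). Throughout, I would use the decomposition $E_k(\Sigma_1, a) = \bigcup_{i=0}^4 E^i_k(\Sigma_1, a)$ from~\eqref{eq:extremepoints:bounds} and bound each $|E^i_k|$ separately; since $|E^0_k| \le 4$ trivially, the task reduces to $|E^i_k|$ for $i \in [1:4]$.

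For $a \in \mathcal{Q}_1$, both $A_1$ and $A_2$ preserve $\mathcal{Q}_1$, so the entire orbit lies in $\mathcal{Q}_1$. To bound $|E^i_k|$, I would track which normal directions $c \in \inte{\mathcal{Q}_i}$ give rise to maximizers over $P_{k+1}(\Sigma_1, a)$ that are new compared to step $k$. Since a maximizer of $c \cdot x$ over $A_j P_k(\Sigma_1, a)$ equals $A_j y$, where $y$ maximizes $(A_j^\top c) \cdot x$ over $P_k(\Sigma_1, a)$, the ``active direction'' set evolves under $A_1^\top$ (a swap) and $A_2^\top$ (a lower-triangular shear). An inductive argument exploiting this shearing structure should show the active direction set in each quadrant grows at most polynomially, yielding the desired $|E^i_k| = O(k^2)$.

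For $a \in \inte{\mathcal{Q}_2}$, the analysis is substantially more delicate because the orbit straddles multiple quadrants: $A_1$ swaps $\mathcal{Q}_2$ with $\mathcal{Q}_4$, and $A_2^n (a_1,a_2)^\top = (a_1 + n a_2, a_2)^\top$ eventually shears any $\mathcal{Q}_2$ point into $\mathcal{Q}_1$ after $\ceil{|a_1|/a_2}$ applications. Because $\mathcal{Q}_1 \cup \mathcal{Q}_3$ is invariant under both matrices, each extreme point $x(k) \in E_k(\Sigma_1, a)$ with a fixed producing word has a well-defined \emph{first-entry time} $k_0 \in [0:k]$ into $\mathcal{Q}_1 \cup \mathcal{Q}_3$, after which its evolution is handled by the first case. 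I would partition $E_k(\Sigma_1, a)$ according to the pair $(k_0, y)$ where $y = x(k_0)$ is the first-entry vector, bound the total number of distinct first-entry vectors $y$ over all $k_0 \le k$ by $O(k^2)$ via a canonical-form reduction (using $A_1^2 = I$ and $A_1 A_2 A_1 = A_3$, any word simplifies to an alternating product of $A_2$- and $A_3$-blocks parameterized by a composition of $k_0$), and then invoke the first-case bound $O((k - k_0)^2) = O(k^2)$ on the orbit from each entry vector. Multiplying the two quadratic counts gives the claimed $O(k^4)$.

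The hardest part will be the entry-vector count: the raw number of words in $\{A_1, A_2\}^{k_0}$ is exponential in $k_0$, and products such as $A_1 A_2 = A_5$ and $A_2 A_1 = A_4$ have Fibonacci-like spectral radii, so direct enumeration is hopeless. The argument must therefore leverage heavy convex-hull collapses---many words produce entry vectors that are strict convex combinations of others and thus contribute no new extreme points. I plan to parameterize each candidate entry vector by the composition data $(\epsilon_0, n_1, n_2, \dots, n_s, \epsilon_1)$ of $k_0$ coming from its canonical form, and then use a slope-monotonicity argument on the cone $\mathcal{Q}_1 \cup \mathcal{Q}_3$ to show that only $O(k^2)$ such entry vectors can survive as extreme points of $P_{k_0}(\Sigma_1, a)$; if this count succeeds, the rest of the proof is a mechanical composition of the two cases.
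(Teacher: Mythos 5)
Your overall architecture matches the paper's: split by the quadrant of $a$, bound the pieces $E^i_k(\Sigma_1,a)$ for $a$ in $\mathcal{Q}_1\cup\mathcal{Q}_3$ (the paper does this in Lemmas~\ref{lemma:A1A2:Q1:Q1}--\ref{lemma:A1A2:Q1:Q24}, getting $O(k^2)$ in total via Proposition~\ref{prop:A1A2:Q1} and~\ref{prop:A1A2:Q1boundary}), then, for $a\in\inte{\mathcal{Q}_2}\cup\inte{\mathcal{Q}_4}$, decompose the orbit by the time of first entry into the invariant cone $\mathcal{Q}_1\cup\mathcal{Q}_3$ and sum an $O(k^2)$ count of entry events against an $O(k^2)$ per-seed bound, exactly as in the proof of Proposition~\ref{prop:A1A2:Q2}. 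However, there is a genuine gap at precisely the point you flag as ``the hardest part'': you have no argument for the $O(k^2)$ entry-vector count, only the hope that ``heavy convex-hull collapses'' plus a canonical-form/slope-monotonicity argument will deliver it. The paper's resolution is different in kind and is the one nontrivial new idea of this case, Lemma~\ref{lemma:A1A2:Q4}: as long as the trajectory stays in $\inte{\mathcal{Q}_2}\cup\inte{\mathcal{Q}_4}$, the pair of absolute values of the coordinates evolves by the subtractive Euclidean recursion $u_j=\max\{v_{j-1},u_{j-1}-v_{j-1}\}$, $v_j=\min\{v_{j-1},u_{j-1}-v_{j-1}\}$, and the state after using $t$ copies of $A_2$ lies in a fixed $4$-element set $S_t$ \emph{regardless of how the $A_1$'s are interleaved}. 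Hence $|X^{2,4}_k(\Sigma_1,a)|\le 4k+4$ as a count of actual points, with no appeal to extreme-point structure at all; entry vectors at time $k_0$ are $A_2$-images of such points, so their number is $O(k_0)$, and summing gives the $O(k^2)$ you need. Your proposed substitute does not obviously work: parameterizing words by compositions of $k_0$ leaves exponentially many candidates, extreme-point ``survival'' arguments are delicate because the relevant polytopes straddle $\mathcal{Q}_2$ and $\mathcal{Q}_4$, and a slope-monotonicity argument ``on the cone $\mathcal{Q}_1\cup\mathcal{Q}_3$'' is aimed at the wrong region --- the collapse you need concerns the segment of the trajectory \emph{before} entry, inside $\mathcal{Q}_2\cup\mathcal{Q}_4$. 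Without a lemma of the strength of Lemma~\ref{lemma:A1A2:Q4}, the $O(k^4)$ bound is not established.

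Two smaller points. First, your $\mathcal{Q}_1$ case is only a sketch: ``an inductive argument exploiting this shearing structure should show'' is the entire content, whereas the needed result requires the specific recursions $|E^1_k|\le k+1$, $|E^3_k|\le 2$, $|E^4_k|\le|E^4_{k-1}|+|E^1_{k-1}|+2$, $|E^2_k|\le|E^4_{k-1}|$ (or something equally quantitative); the dual-direction viewpoint you describe is indeed how the paper argues, but the case analysis discarding points of $A_1S_1$ and $A_2S_3$ is where the work lies. Second, in the $\inte{\mathcal{Q}_2}$ case you must also account for orbit points that never enter $\mathcal{Q}_1\cup\mathcal{Q}_3$ within $k$ steps (they have no first-entry time); this is harmless once the linear bound on $|X^{2,4}_k|$ is available, but it again relies on the missing lemma.
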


\noindent Proposition~\ref{prop:A1A2} is an immediate consequence of the following propositions.

\begin{proposition}\label{prop:A1A2:Q1}
For any $a\in \inte{\mathcal{Q}_1} \cup \inte{\mathcal{Q}_3}$, $N_k(\Sigma_1, a) = O(k^2)$.
\end{proposition}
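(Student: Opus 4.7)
The plan begins with two reductions. The matrix $A_1$ is the coordinate swap and $A_2$ is the shear $(x_1,x_2)\mapsto(x_1+x_2,x_2)$, so both map $\inte{\mathcal{Q}_1}$ into itself; by the identity $A_i(-x)=-A_ix$ they also preserve $\inte{\mathcal{Q}_3}$. Hence for $a\in\inte{\mathcal{Q}_1}$ we have $X_k(\Sigma_1,a)\subseteq\inte{\mathcal{Q}_1}$ for every $k\ge 0$, and the case $a\in\inte{\mathcal{Q}_3}$ reduces to this one by negation. By Definition~\ref{def:Ei} and~\eqref{eq:extremepoints:bounds} it suffices to bound $|E^i_k(\Sigma_1,a)|$ for $i\in[0:4]$; the set $E^0_k$ always has at most four elements, so the real task is to control $E^1_k,E^2_k,E^3_k,E^4_k$.

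The central tool is a dual (backward) argument. If $\hat x\in E^1_k$ maximizes $c^\top x$ over $P_k(\Sigma_1,a)$ for some $c\in\inte{\mathcal{Q}_1}$, then writing $\hat x=A_i\hat y$ with $\hat y$ a maximizer of $(A_i^\top c)^\top y$ over $P_{k-1}(\Sigma_1,a)$, and noting that $A_1^\top=A_1$ and $A_2^\top=A_3=\bigl[\begin{smallmatrix}1&0\\1&1\end{smallmatrix}\bigr]$ both send $\inte{\mathcal{Q}_1}$ into itself, we obtain $\hat y\in E^1_{k-1}$ and hence $E^1_k\subseteq A_1E^1_{k-1}\cup A_2E^1_{k-1}$. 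Parametrizing the direction cone by $s=c_2/c_1\in(0,\infty)$, the two dual maps become the generators $s\mapsto 1/s$ and $s\mapsto s+1$ of the Stern--Brocot / continued-fraction dynamics. Each point of $E^1_k$ corresponds to a cell of the optimality partition of $(0,\infty)$ assigning to each $s$ the best length-$k$ word in $\{A_1,A_2\}$; I would bound the number of cells inductively, arguing that each convexification step can create at most $O(k)$ new cells, which yields $|E^1_k|=O(k^2)$. Since $A_1^\top$ and $A_2^\top$ also jointly preserve $\inte{\mathcal{Q}_3}$ (both components of $c$ stay negative under each dual map), the symmetric argument bounds $|E^3_k|$.

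The remaining quadrants $\inte{\mathcal{Q}_2}$ and $\inte{\mathcal{Q}_4}$ are not jointly preserved by the dual maps --- $A_1^\top$ swaps them, and $A_2^\top$ pushes a direction in $\mathcal{Q}_2$ (resp.\ $\mathcal{Q}_4$) eventually into $\mathcal{Q}_3$ (resp.\ $\mathcal{Q}_1$) once its dominant coordinate absorbs the other --- so I would argue that $|E^2_k|$ and $|E^4_k|$ grow by at most $O(1)$ per step after a finite transient, contributing only $O(k)$. Summing all pieces gives $N_k(\Sigma_1,a)=O(k^2)$. The main obstacle is the partition-refinement count for $E^1_k$ (and, symmetrically, $E^3_k$): controlling how the interval subdivisions induced by $s\mapsto 1/s$ and $s\mapsto s+1$ interact with the pruning imposed by taking convex hulls. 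I would attack this either by maintaining an inductive invariant on the endpoint slopes of the optimality intervals, or by associating each surviving extreme point with a reduced word $A_1^{\epsilon_0}A_2^{n_1}A_1A_2^{n_2}\cdots A_1^{\epsilon_s}$ (using $A_1^2=I$) and bounding the number of such distinct words whose images $T_{k-1}\cdots T_0\,a$ can appear as extreme points of $P_k(\Sigma_1,a)$.
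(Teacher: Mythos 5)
Your skeleton---reduce to $a\in\inte{\mathcal{Q}_1}$ by negation, split $E_k(\Sigma_1,a)$ into the five groups of Definition~\ref{def:Ei}, and exploit the dual recursion $E^1_k\subseteq A_1E^1_{k-1}\cup A_2E^1_{k-1}$ coming from the fact that $A_1^{\top}$ and $A_2^{\top}$ map $\inte{\mathcal{Q}_1}$ into itself---is exactly the paper's. But the quantitative steps that make the count polynomial are missing. The inclusion $E^1_k\subseteq A_1E^1_{k-1}\cup A_2E^1_{k-1}$ by itself only gives $|E^1_k|\le 2|E^1_{k-1}|$, i.e.\ an exponential bound; the entire content of the paper's Lemma~\ref{lemma:A1A2:Q1:Q1} is the pruning step showing all but one of the doubled candidates are dominated, giving $|E^1_k|\le|E^1_{k-1}|+1$, hence $|E^1_k|=O(k)$ (not merely $O(k^2)$). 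This is done by singling out the maximizer $p$ of $x_1+x_2$ over $P_{k-1}$ with smallest $x_2$-coordinate, splitting $E^1_{k-1}$ into the points above and below $p$, and verifying explicit inequalities showing that $A_1$-images of the upper part and $A_2$-images of the lower part never maximize a direction in $\inte{\mathcal{Q}_1}$. Your Stern--Brocot/optimality-cell reformulation is a reasonable lens, but the assertion that ``each convexification step can create at most $O(k)$ new cells'' \emph{is} that pruning statement, and you do not prove it---you flag it yourself as the main obstacle. The same applies to $E^3_k$, where the paper in fact proves the much stronger $|E^3_k|\le 2$ by a short induction showing $E^3_k\subseteq\{A_1^k a,\,A_1^{k-2}A_2A_1a\}$.

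The treatment of $E^2_k$ and $E^4_k$ has a second, independent gap. You claim they grow by $O(1)$ per step ``after a finite transient'' because $A_2^{\top}$ eventually pushes a direction in $\inte{\mathcal{Q}_4}$ into $\mathcal{Q}_1$; but the escape time of $c=(1,-t)$ under $c\mapsto(c_1,c_1+c_2)$ is of order $t$ and is unbounded over the open cone, so there is no uniform transient, and nothing in your argument rules out that new extreme points are recruited from $E^1_{k-1}$ at every step. The paper's Lemma~\ref{lemma:A1A2:Q1:Q24} instead proves two-step domination inequalities ($cA_2A_2x>cA_1A_1x$ and $cA_2A_2x>cA_1A_2x$ for $c\in\inte{\mathcal{Q}_4}$, $x$ in the first quadrant) to get $|E^4_k|\le|E^4_{k-1}|+|E^1_{k-1}|+2$ and $|E^2_k|\le|E^4_{k-1}|$; combined with $|E^1_k|=O(k)$ these recursions are what produce the quadratic term. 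Your accounting is inverted---you budget $O(k^2)$ for $E^1_k$ and $O(k)$ for $E^2_k,E^4_k$---and it is fragile: if the unproven $O(1)$-per-step claim fails and one falls back on a recursion of the form $|E^4_k|\le|E^4_{k-1}|+|E^1_{k-1}|+O(1)$ while knowing only $|E^1_k|=O(k^2)$, the conclusion degrades to $O(k^3)$. So to reach the stated $O(k^2)$ you must actually establish both counts, and for $E^1_k$ you need the sharper linear bound.
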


\begin{proposition} \label{prop:A1A2:Q1boundary}
For any $a \in \partial{\mathcal{Q}_1} \cup \partial{\mathcal{Q}_3}$, $N_k(\Sigma_1, a) = O(k^2)$.
\end{proposition}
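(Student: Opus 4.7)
The plan is to reduce the boundary case to the interior case already established in Proposition~\ref{prop:A1A2:Q1}. By linearity, $X_k(\Sigma_1, -a) = -X_k(\Sigma_1, a)$, so $N_k(\Sigma_1, \cdot)$ is invariant under negation of the initial vector, and it suffices to consider $a \in \partial \mathcal{Q}_1$. The case $a = 0$ is trivial, so I would assume $a = (c, 0)^\top$ or $a = (0, c)^\top$ for some $c > 0$.

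First I would trace the dynamics on the axes: $A_2$ fixes $(c, 0)^\top$, $A_1$ swaps $(c, 0)^\top$ and $(0, c)^\top$, and $A_2$ sends $(0, c)^\top$ to the interior point $b := (c, c)^\top$. Since $A_1$ and $A_2$ are entrywise nonnegative, once a trajectory reaches $b$ it stays in $\inte{\mathcal{Q}_1}$ forever. A short case analysis on the first position at which an iterate leaves the axes then shows that, for $k$ large enough,
\[
X_k(\Sigma_1, a) \;=\; \{(c, 0)^\top, (0, c)^\top\} \cup \bigcup_{\ell=0}^{k-\delta} X_\ell(\Sigma_1, b),
\]
where $\delta \in \{1, 2\}$ depends on which axis point $a$ is. The nontrivial (reverse) inclusion uses that any $p \in X_\ell(\Sigma_1, b)$ can be reached from $a$ in exactly $k$ steps by prepending a word of the form $A_2 A_1 A_2^{\ast}$, leveraging that $A_2$ fixes $(c, 0)^\top$ so that padding with additional $A_2$'s does not alter the axis prefix.

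The step that yields the polynomial bound is the observation that $A_1 b = b$, i.e., $b$ is an eigenvector of $A_1$ with eigenvalue $1$. Hence, for any length-$\ell$ word $w$ over $\{A_1, A_2\}$, $w b = (w A_1) b \in X_{\ell+1}(\Sigma_1, b)$, so the sets $X_\ell(\Sigma_1, b)$ are nested increasing in $\ell$. The union above therefore collapses to the single set $X_{k-\delta}(\Sigma_1, b)$, which gives
\[
P_k(\Sigma_1, a) \;=\; \conv{\{(c, 0)^\top, (0, c)^\top\} \cup P_{k-\delta}(\Sigma_1, b)}.
\]
Since the extreme points of a convex hull of two convex sets must lie in the union of the extreme points of those sets, $N_k(\Sigma_1, a) \leq 2 + N_{k-\delta}(\Sigma_1, b) = O(k^2)$ by Proposition~\ref{prop:A1A2:Q1} applied to $b \in \inte{\mathcal{Q}_1}$. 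The main obstacle I expect is the bookkeeping for the set equality in the first display: the forward inclusion is immediate from nonnegativity of the matrices and the invariance of $\inte{\mathcal{Q}_1}$, but the reverse inclusion requires an explicit length-matched prefix construction, and the two starting points $a = (c, 0)^\top$ and $a = (0, c)^\top$ yield slightly different shifts $\delta$ that must be verified case by case.
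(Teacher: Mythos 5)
Your proposal is correct and follows essentially the same route as the paper: reduce to an axis initial vector, observe that a trajectory stays on the two axis points $\{(c,0)^\top,(0,c)^\top\}$ until it leaves via $A_2(0,c)^\top=(c,c)^\top$ and never returns, and use the fixed points $A_1(c,c)^\top=(c,c)^\top$ and $A_2(c,0)^\top=(c,0)^\top$ to match word lengths, yielding $N_k(\Sigma_1,a)\le 2+N_{k-\delta}(\Sigma_1,(c,c)^\top)=O(k^2)$ by Proposition~\ref{prop:A1A2:Q1}. The only cosmetic difference is that the paper pads with $A_1^{l-2}$ to land exactly in $X_{k-2}(\Sigma_1,(1,1)^\top)$, while you keep a union over lengths and collapse it by nestedness; the underlying idea is identical.
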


\begin{proposition}\label{prop:A1A2:Q2}
For any $a \in \inte{\mathcal{Q}_2} \cup \inte{\mathcal{Q}_4}$, $N_k(\Sigma_1, a) = O(k^4)$.
\end{proposition}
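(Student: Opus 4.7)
By $P_k(\Sigma_1, -a) = -P_k(\Sigma_1, a)$ we have $N_k(\Sigma_1, a) = N_k(\Sigma_1, -a)$, so it suffices to treat $a \in \inte{\mathcal{Q}_2}$. The plan is to partition the extreme points of $P_k(\Sigma_1, a)$ by the quadrant in which they lie, separating those in $\mathcal{Q}_1 \cup \mathcal{Q}_3$ from those in $\mathcal{Q}_2 \cup \mathcal{Q}_4$, and bound each part using three structural facts: (i) both $A_1$ and $A_2$ are non-negative, so $\mathcal{Q}_1$ and $\mathcal{Q}_3$ are absorbing; $A_1$ swaps $\mathcal{Q}_2 \leftrightarrow \mathcal{Q}_4$, and $A_2$ acts via $(x_1, x_2) \mapsto (x_1 + x_2, x_2)$, a Euclidean-style reduction of the negative component in $\mathcal{Q}_2$ and in $\mathcal{Q}_4$; (ii) a trajectory starting in $\inte{\mathcal{Q}_2}$ can enter $\mathcal{Q}_1$ only directly from $\mathcal{Q}_2$ via $A_2$, and $\mathcal{Q}_3$ only from $\mathcal{Q}_4$ via $A_2$; (iii) since $A_1, A_2$ are invertible, extremality propagates backward along trajectories, i.e., if $x(k)$ is extreme in $P_k(\Sigma_1, a)$ then $x(t)$ is extreme in $P_t(\Sigma_1, a)$ for every $t \in [0:k]$.

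For an extreme point $x(k)$ of $P_k(\Sigma_1, a)$ lying in $\mathcal{Q}_1 \cup \mathcal{Q}_3$, the trajectory has a well-defined first-entry time $\tau \in [1:k]$ into $\mathcal{Q}_1 \cup \mathcal{Q}_3$, with $T_{\tau-1} = A_2$ and, by~(iii), $x(\tau-1)$ an extreme point of $P_{\tau-1}(\Sigma_1, a)$ lying in $\mathcal{Q}_2 \cup \mathcal{Q}_4$. Setting $y := A_2 x(\tau-1) \in \mathcal{Q}_1 \cup \mathcal{Q}_3$, the continuation $x(k)$ is extreme in $P_{k-\tau}(\Sigma_1, y)$, which by Propositions~\ref{prop:A1A2:Q1} and~\ref{prop:A1A2:Q1boundary} has $O((k-\tau)^2)$ extreme points. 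Letting $L_\tau$ denote the number of extreme points of $P_\tau(\Sigma_1, a)$ that lie in $\mathcal{Q}_2 \cup \mathcal{Q}_4$, this yields
\[
  \#\{x(k) \in E_k(\Sigma_1, a) : x(k) \in \mathcal{Q}_1 \cup \mathcal{Q}_3\} \;\le\; \sum_{\tau = 1}^{k} L_{\tau-1} \cdot O\bigl((k-\tau)^2\bigr).
\]

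I would then establish separately the bound $L_\tau = O(\tau)$ by a combinatorial analysis of escape-free trajectories (those that never enter $\mathcal{Q}_1 \cup \mathcal{Q}_3$). Any such trajectory admits a reduced block form $A_1^{\epsilon_0} A_2^{j_1} A_1 A_2^{j_2} A_1 \cdots A_2^{j_s} A_1^{\epsilon_1}$ with $\epsilon_0, \epsilon_1 \in \{0,1\}$ and $j_i \ge 1$ state-dependently bounded via a Euclidean-algorithm-like recursion. The extremality-propagation property~(iii) at every intermediate step should restrict the admissible block patterns to a linear-size family. Combining, $N_k(\Sigma_1, a) \le L_k + O(1) + \sum_{\tau = 1}^{k} L_{\tau-1} \cdot O((k-\tau)^2) = O(k^4)$, which is the claimed bound.

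The hard part will be proving the linear bound $L_\tau = O(\tau)$: the escape-free words of length $\tau$ correspond to truncated continued-fraction expansions in the group generated by $A_1$ and $A_2$, of which there are exponentially many, so the extremality constraint~(iii) must be leveraged strongly to prune down to $O(\tau)$ admissible extreme states. A secondary subtlety is that the entry point $y = A_2 x(\tau-1)$ may lie on a coordinate axis (the degenerate case $x_1(\tau-1) + x_2(\tau-1) = 0$); in that case Proposition~\ref{prop:A1A2:Q1boundary} applies in place of Proposition~\ref{prop:A1A2:Q1} with the same $O((k-\tau)^2)$ bound, so the combined estimate is unaffected, and the same argument gives the $\inte{\mathcal{Q}_4}$ case by reflection.
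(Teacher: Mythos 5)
Your outer structure is sound and in fact mirrors the paper's proof: reduce to one quadrant by $N_k(\Sigma_1,a)=N_k(\Sigma_1,-a)$, decompose trajectories by the first time they enter $\mathcal{Q}_1\cup\mathcal{Q}_3$ (which indeed can only happen via $A_2$), invoke Propositions~\ref{prop:A1A2:Q1} and~\ref{prop:A1A2:Q1boundary} to get an $O(j^2)$ count for each entry point, and sum a linear-size family of entry states against the quadratic bound to get $O(k^4)$. The backward-propagation-of-extremality claim (iii) is also fine since $A_1,A_2$ are invertible. But the proof is incomplete where you yourself flag it: the linear bound on the number of relevant states that stay in $\mathcal{Q}_2\cup\mathcal{Q}_4$ (your $L_\tau=O(\tau)$) is the heart of the proposition, and you offer only a plan, not an argument. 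Worse, the plan aims at the wrong object: you assume the escape-free reachable set is exponentially large (``truncated continued-fraction expansions, of which there are exponentially many'') and hope that extremality pruning rescues you, which is exactly the kind of pruning argument that is hard to make rigorous and that you never carry out.

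The paper's Lemma~\ref{lemma:A1A2:Q4} shows that no extremality argument is needed at all, because the escape-free reachable set itself collapses: for $a\in\inte{\mathcal{Q}_4}$, every product $T_{k-1}\cdots T_0 a$ whose value lies in $\inte{\mathcal{Q}_2}\cup\inte{\mathcal{Q}_4}$ and which uses exactly $t$ copies of $A_2$ lands in a fixed four-element set $S_t=\{(u_t,-v_t)^\top,(-u_t,v_t)^\top,(v_t,-u_t)^\top,(-v_t,u_t)^\top\}$, where $u_t,v_t$ are produced by the max/min (Euclidean-type) recursion $u_j=\max\{v_{j-1},u_{j-1}-v_{j-1}\}$, $v_j=\min\{v_{j-1},u_{j-1}-v_{j-1}\}$; this is a short induction using $A_1 S_t=S_t$ and the action of $A_2$. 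Hence $X_k(\Sigma_1,a)\cap(\inte{\mathcal{Q}_2}\cup\inte{\mathcal{Q}_4})$ has at most $4k+4$ points — a statement about all reachable points, not just extreme ones — which immediately supplies the linear bound your argument needs (points of $E_\tau$ on the axes belong to $\mathcal{Q}_1\cup\mathcal{Q}_3$ and are handled by the other branch). Until you either prove this collapse or complete your pruning analysis of the block words, the proposal does not establish $N_k(\Sigma_1,a)=O(k^4)$; I would redirect your effort from counting extreme points among escape-free words to proving that distinct escape-free words with the same number of $A_2$ factors give essentially the same point.
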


\noindent We first focus on proving Proposition~\ref{prop:A1A2:Q1}.
Our strategy is to bound the cardinality of $E^i_k(\Sigma_1, a)$ for each $i$.
Then according to~\eqref{eq:extremepoints:bounds}, $N_k(\Sigma_1)$ will be bounded as well.
\begin{lemma}\label{lemma:A1A2:Q1:Q1}
For any $a \in \inte{\mathcal{Q}_1}$ and integer $k \ge 2$, $|E^1_{k}(\Sigma_1, a)| \leq k + 1$.
\end{lemma}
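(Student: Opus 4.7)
The plan is induction on $k \ge 2$. For the base case, since $A_1^2 = I$, the set $X_2(\Sigma_1, a)$ contains at most the four points $a,\, A_1 A_2 a,\, A_2 A_1 a,\, A_2^2 a$, which for $a=(a_1,a_2)$ evaluate to $(a_1,a_2),\,(a_2,a_1+a_2),\,(a_1+a_2,a_1),\,(a_1+2a_2,a_2)$. A direct componentwise comparison shows that $a$ is dominated by $A_2 A_1 a$ when $a_1 \ge a_2$ and by $A_1 A_2 a$ when $a_2 \ge a_1$, leaving at most three Pareto-optimal points in $\inte{\mathcal{Q}_1}$ and thus $|E^1_2(\Sigma_1, a)| \le 3 = k+1$.

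For the inductive step I use the recursion $P_{k+1}(\Sigma_1, a) = \conv\bigl(A_1 P_k(\Sigma_1, a) \cup A_2 P_k(\Sigma_1, a)\bigr)$. Any $v \in E^1_{k+1}(\Sigma_1, a)$ is of the form $Tw$ with $T \in \Sigma_1$ and $w \in \ext{P_k(\Sigma_1, a)}$, and a direction $c \in \inte{\mathcal{Q}_1}$ witnesses $v$'s extremality precisely when $T^{\top}c$ witnesses $w$'s extremality. Since $A_1^{\top}$ is the swap and $A_2^{\top}c=(c_1,\,c_1+c_2)$, the image of $\inte{\mathcal{Q}_1}$ under $A_1^{\top}$ is again $\inte{\mathcal{Q}_1}$, while its image under $A_2^{\top}$ is the sub-cone $\mathcal{U}=\{(c_1,c_2):c_2>c_1>0\}$. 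Hence the candidate set for $E^1_{k+1}(\Sigma_1, a)$ is
\[
A_1\bigl(E^1_k(\Sigma_1, a)\bigr)\;\cup\;A_2(U_k),
\]
where $U_k$ denotes the subset of $E^1_k(\Sigma_1, a)$ whose normal cone meets $\mathcal{U}$; this candidate set has cardinality at most $|E^1_k(\Sigma_1, a)| + |U_k|$.

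The crux is to argue that after the convex hull operation at least $|U_k|-1$ of these candidates fail to remain extreme, yielding $|E^1_{k+1}(\Sigma_1, a)| \le |E^1_k(\Sigma_1, a)| + 1$. Writing $E^1_k(\Sigma_1, a) = \{p_0, p_1, \ldots, p_m\}$ in order of increasing polar angle and letting $j^{*}$ be the smallest index with $p_{j^{*}} \in U_k$ (so that $U_k=\{p_{j^{*}}, \ldots, p_m\}$), the claim is that each of the $A_1$-images $A_1 p_{j^{*}+1}, A_1 p_{j^{*}+2}, \ldots, A_1 p_m$ is componentwise dominated by an appropriately chosen element of $A_2(U_k)$, and hence cannot be a vertex of $P_{k+1}(\Sigma_1, a)$. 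This dominance claim is the main obstacle: it fails for arbitrary convex chains lying in $\mathcal{Q}_1$ and must exploit the specific algebraic structure of $\Sigma_1$. I would use the identities $A_1^2 = I$ and $A_1 A_2 A_1 = A_3$, where $A_3 = \bigl(\begin{smallmatrix} 1 & 0 \\ 1 & 1 \end{smallmatrix}\bigr)$, to express each $p_i = M_i a$ in a canonical alternating form in the non-negative matrices $A_2$ and $A_3$, and then invoke their componentwise monotonicity on $\inte{\mathcal{Q}_1}$ to compare $A_1 p_i$ with $A_2 p_{i-1}$ for each $i > j^{*}$; a case split based on the position of $p_{i-1}$ relative to the diagonal $y_1 = y_2$ is needed to cover all subcases. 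Combined with the base case, this closes the induction and gives $|E^1_k(\Sigma_1, a)| \le k+1$ for all $k \ge 2$.
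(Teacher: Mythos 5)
Your reduction of the candidate set is fine (and it matches the paper's first step: every point of $E^1_{k+1}$ is $A_1w$ or $A_2w$ for some $w\in E^1_k$, with the $A_2$-preimages further restricted because $A_2^{\top}$ maps $\inte{\mathcal{Q}_1}$ into $\{c_2>c_1>0\}$). But the pruning step is the entire content of the lemma, and you do not prove it: you state that ``at least $|U_k|-1$ of these candidates fail to remain extreme,'' call the dominance claim ``the main obstacle,'' and then only sketch how you \emph{would} attack it via canonical forms in $A_2$ and $A_3$. That is a genuine gap, not a proof. Worse, the specific claim you propose --- that $A_1p_i$ is \emph{componentwise} dominated by $A_2p_{i-1}$ for each $i>j^{*}$ --- is stronger than what is needed and is not obviously true even with the structure of $\Sigma_1$: it requires $p_{i-1,2}\ge p_{i,1}$ for consecutive vertices of the chain, which fails for general convex chains in $\inte{\mathcal{Q}_1}$ (e.g.\ $p_{i-1}=(7,3)$, $p_i=(5,6)$), so you would have to establish a nontrivial arithmetic property of the vertices of $P_k(\Sigma_1,a)$ that you never verify. (A small additional slip: extremality of $Tw$ for direction $c$ implies, but is not equivalent to, extremality of $w$ for $T^{\top}c$; only the implication is needed, so this is cosmetic.)

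For contrast, the paper's proof never needs componentwise dominance and needs no induction on the algebraic form of the vertices. It anchors the argument at a single special vertex $p$ of $P_{k-1}$, the maximizer of $x_1+x_2$ with smallest $x_2$-coordinate, splits $E^1_{k-1}$ into the points above $p$ ($S_1$), $p$ itself, and the points below $p$ ($S_3$), and then shows, for each fixed objective $c\in\inte{\mathcal{Q}_1}$ separately, that $A_1x$ with $x\in S_1$ is strictly beaten by one of $A_2x$, $A_1p$, or $A_2p$ (cases on $x_1\lessgtr x_2$ and $c_1\lessgtr c_2$, using only $x_1+x_2\le p_1+p_2$ and $x_2>p_2$), and that $A_2x$ with $x\in S_3$ is strictly beaten by $A_2p$. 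This kills $|S_1|+|S_3|=|E^1_{k-1}|-1$ candidates and gives $|E^1_k|\le|E^1_{k-1}|+1$ directly, with the comparison point allowed to depend on $c$. If you want to salvage your route, you should either prove your componentwise claim from the structure of the reachable points or weaken it to a direction-dependent comparison in the spirit above; as written, the induction does not close.
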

\begin{proof}
To simplify the notations, we write $E^1_k$ and $P_k$ instead of $E^1_{k}(\Sigma_1, a)$ and $P_k(\Sigma_1, a)$ respectively in the rest of the proof.
We claim that $|E_{k}^1| \leq |E_{k-1}^1| + 1$ for $k\ge 2$.
Then $|E_k^1| \le |E_1^1|+ (k-1) \le 2 + (k-1) = k+1$.
To prove the claim, we first show that $E_{k}^1 \subseteq A_1E_{k-1}^1 \cup A_2E_{k-1}^1$.
Note that
\begin{align}\label{linear_program}
  \max\{c x \mid x \in P_k\} = \max\{&\max\{c A_1x \mid x \in P_{k-1}\}, \max\{c A_2x \mid x \in P_{k-1}\}\}.
\end{align}
%
%
Given $c\in \inte{\mathcal{Q}_1}$, both $c A_1$ and $c A_2$ are in the interior of $\mathcal{Q}_1$, so the maximizers of linear programs on the right are in the set $E_{k-1}^1$. Therefore, $E_{k}^1 \subseteq A_1E_{k-1}^1 \cup A_2E_{k-1}^1$.

Next we show that some points in $A_1E_{k-1}^1 \cup A_2E_{k-1}^1$ cannot belong to $E_k^1$.
Let $p = (p_1, p_2)^\top \in E_{k-1}^1$ be the maximizer of the linear program $  \max\{x_1 + x_2 \mid x \in P_{k-1}\}$ with the smallest $x_2$-coordinate.
Note that there is no other point in $E_{k-1}^1$ whose $x_2$-coordinate is $p_2$.
Otherwise suppose that there is such a point $p^\prime$.
The fact that $p$ is the maximizer of $\max\{x_1 + x_2 \mid x \in P_{k-1}\}$ implies $p_1 > p^\prime_1$.
Then $c p^\prime < c p$ for any $c \in \inte{\mathcal{Q}_1}$, which contradicts that $p^\prime \in E_{k-1}^1$.
Now we can partition $E_{k-1}^1$ into three sets $S_1=\{x \mid x \in E_{k-1}^1, x_2 > p_2\}$, $S_2=\{p\}$, and $S_3=\{x \mid x \in E_{k-1}^1, x_2 < p_2\}$.
%
Then $E_k^1 \subseteq A_1S_1 \cup A_1S_2 \cup A_1S_3 \cup A_2S_1 \cup A_2S_2 \cup A_2S_3$.
We show below that the points in $A_1S_1$ or $A_2 S_3$ cannot be in $E_{k}^1$.

First consider any point $x \in S_1$.
    \begin{itemize}
      \item If $x_1 < x_2$, we have $c A_2 x - c A_1x = c_1x_1 + c_2(x_2 - x_1) > 0$.
			%
		%
    \item Suppose $x_1 \geq x_2$ and $c_1 < c_2$. Since $x_1 + x_2 \leq p_1 + p_2$, we have $x_1 - p_1 \leq p_2 - x_2 < 0$. Therefore, $c A_1 p - c A_1x = c_1 (p_2 - x_2) + c_2(p_1 - x_1) \geq  c_1 (x_1 - p_1) + c_2( p_1 - x_1) = (c_1 - c_2) (x_1 - p_1) > 0$.

    \item Suppose $x_1 \geq x_2$ and $c_1 \geq c_2$. Since $x_1 + x_2 \leq p_1 + p_2$, $p_2 - x_1 \ge x_2 - p_1$. Since $x_1 \geq x_2 > p_2$ and $x_1 + x_2 \leq p_1 + p_2$, we have $p_1\ge x_2$. Then $c A_2 p - c A_1x = c_1 p_1 + c_1(p_2 - x_2) + c_2(p_2 - x_1)
        \geq  c_1 p_1 + c_1 (p_2 - x_2) + c_2(x_2 - p_1) = (c_1 - c_2)(p_1 - x_2) + c_1p_2 > 0$.
    \end{itemize}
    Therefore, $A_1x \in A_1S_1$ cannot be a maximizer of linear program~\eqref{linear_program} with $c\in \inte{\mathcal{Q}_1}$.

Now consider any point $x \in S_3$. Since $p_2 - x_2 > 0$ and $p_1 + p_2 \ge x_1 + x_2$,
$c A_2 p - c A_2x = c_1(p_1 + p_2 - x_1 - x_2 ) + c_2(p_2 - x_2) > 0$.
%
Therefore, $A_2x \in A_2S_3$ cannot be a maximizer of linear program~\eqref{linear_program} with $c\in \inte{\mathcal{Q}_1}$.
Hence, $|E_{k}^1| \leq |A_1S_2| + |A_1S_3| + |A_2S_1| + |A_2S_2|  = |S_1| + |S_2| + |S_3| + |S_2| = |E_{k-1}^1| + 1$.
\end{proof}
%

\begin{lemma}\label{lemma:A1A2:Q1:Q3}
For any $a \in \inte{\mathcal{Q}_1}$ and integer $k \ge 2$, $|E_{k}^3(\Sigma_1, a)| \leq 2$.
\end{lemma}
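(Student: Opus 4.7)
The proof plan is induction on $k$, built on a structural fact that controls the shape of $P_{k-1}(\Sigma_1, a)$ for $k \ge 3$. I would first establish that for every $k \ge 2$ one has
\[m_k := \min\{x_1 \mid x \in P_k(\Sigma_1, a)\} \;=\; M_k := \min\{x_2 \mid x \in P_k(\Sigma_1, a)\} \;=\; \mu := \min(a_1, a_2).\]
This follows from the one-step recursions $m_k = M_{k-1}$ (because $(A_1 x)_1 = x_2$ while $(A_2 x)_1 = x_1 + x_2 > x_2$ on $\inte{\mathcal{Q}_1}$) and $M_k = \min(m_{k-1}, M_{k-1})$, combined with $m_1 = a_2 \ge \min(a_1, a_2) = M_1$, by a short secondary induction that stabilizes at $k=2$.

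The general inductive mechanism is the containment $E^3_k(\Sigma_1, a) \subseteq A_1 E^3_{k-1}(\Sigma_1, a) \cup A_2 E^3_{k-1}(\Sigma_1, a)$, which holds just as in Lemma~\ref{lemma:A1A2:Q1:Q1} since $c A_1 = (c_2, c_1)$ and $c A_2 = (c_1, c_1 + c_2)$ both remain in $\inte{\mathcal{Q}_3}$ for every $c \in \inte{\mathcal{Q}_3}$. For the base case $k = 2$, I would enumerate $X_2 = \{(a_1, a_2), (a_2, a_1 + a_2), (a_1 + a_2, a_1), (a_1 + 2a_2, a_2)\}$: the point $(a_1 + 2a_2, a_2)$ is componentwise dominated by $(a_1, a_2)$ always, and a case split on whether $a_1 \le a_2$ or $a_1 \ge a_2$ shows that one of $(a_2, a_1+a_2)$, $(a_1+a_2, a_1)$ is also componentwise dominated by $(a_1, a_2)$, leaving at most two non-dominated candidates.

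For the inductive step with $k \ge 3$ and $|E^3_{k-1}| = 2$ (the case $|E^3_{k-1}| \le 1$ being immediate from the containment), the structural fact forces $E^3_{k-1} = \{L_{k-1}, R_{k-1}\}$ with $L_{k-1} = (\mu, p_2)$ and $R_{k-1} = (q_1, \mu)$ for some $p_2, q_1 > \mu$; here $L_{k-1}, R_{k-1}$ denote the two southwest corner extreme points, both of which lie in $E^3_{k-1}$ whenever they are distinct since their normal cones then reach strictly into $\inte{\mathcal{Q}_3}$. The four candidates in $A_1 E^3_{k-1} \cup A_2 E^3_{k-1}$ are
\[(p_2, \mu), \quad (\mu, q_1), \quad (\mu + p_2, p_2), \quad (q_1 + \mu, \mu).\]
The third is componentwise dominated by the first (both coordinate differences $\mu$ and $p_2 - \mu$ are nonnegative, the first strictly so); and the first and the fourth share the second coordinate $\mu$, so the one with the larger first coordinate is componentwise dominated by the other (or the two coincide). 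Since a componentwise-dominated point gives strictly smaller $cx$ than its dominator for every $c \in \inte{\mathcal{Q}_3}$ and so cannot lie in $E^3_k$, at most two of the four candidates survive, giving $|E^3_k| \le 2$.

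The main obstacle is precisely identifying the structural fact $m_k = M_k = \mu$ for $k \ge 2$: without it, the four candidates in the inductive step need not admit any clean componentwise dominance relations, and the containment would only yield $|E^3_k| \le 4$. The collapse of the two minima to a common value $\mu$ is exactly what pins the SW extreme points of $P_{k-1}$ to the coordinate-axis-hugging positions $(\mu, \cdot)$ and $(\cdot, \mu)$ and thereby triggers the dominance relations among their images under $A_1$ and $A_2$.
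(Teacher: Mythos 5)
Your proof is correct, and it takes a noticeably different route from the paper's, even though both share the same skeleton: the containment $E^3_k(\Sigma_1,a) \subseteq A_1E^3_{k-1}(\Sigma_1,a)\cup A_2E^3_{k-1}(\Sigma_1,a)$ (valid because $cA_1=(c_2,c_1)$ and $cA_2=(c_1,c_1+c_2)$ stay in $\inte{\mathcal{Q}_3}$) and an induction with base case $k=2$. The paper prunes the four candidates by proving the sharper statement $E^3_k \subseteq \{A_1^k a,\, A_1^{k-2}A_2A_1a\}$, i.e., it identifies the two surviving points as explicit products, which requires the normalization $a_1\le a_2$ and an even/odd split on $k$ in the inductive step, with the exclusions verified by direct inner-product computations. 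You instead prove the invariant $\min\{x_1\mid x\in P_k\}=\min\{x_2\mid x\in P_k\}=\min(a_1,a_2)=\mu$ for $k\ge 2$ (correct, via $m_k=M_{k-1}$ and $M_k=\min(m_{k-1},M_{k-1})$ on the open first quadrant), use it to pin the two members of $E^3_{k-1}$ (when there are two) to the forms $(\mu,p_2)$ and $(q_1,\mu)$ with $p_2,q_1>\mu$, and then eliminate two of the four images by componentwise dominance. This buys you parity- and WLOG-free bookkeeping and a more transparent geometric reason for the bound; the paper's version buys an explicit description of the candidate extreme points (which, as it happens, is not needed elsewhere). Two small points you should spell out in a full write-up: (i) the claim that both southwest corner vertices lie in $E^3_{k-1}$ needs the short normal-cone argument with the tie-breaking convention (bottom vertex of the leftmost face, leftmost vertex of the bottom face), including the degenerate segment/point cases; and (ii) the observation that if the two corners coincide they equal $(\mu,\mu)$, which then dominates all of $P_{k-1}$ and forces $|E^3_{k-1}|=1$, so the two-element case really does have the stated form. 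Both are true and easy, so there is no gap.
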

\begin{proof}
To simplify the notations, we write $E_k^3$ instead of $E_k^3(\Sigma_1, a)$ in the rest of the proof.
%
%
%
Let $a = (a_1, a_2)^\top \in \inte{\mathcal{Q}_1}$.
Assume that $a_1 \le a_2$.
The case in which $a_1 > a_2$ can be proved similarly.
We show below by induction that $E_k^3 \subseteq \{A_1^ka, A_1^{k-2}A_2A_1a\}$ for any $k \geq 2$.
For the base case $k=2$, given any $c \in \inte{\mathcal{Q}_3}$,
       $c A_1^2a - c A_2^2a = - 2 c_1 a_2 > 0,
       c A_1^2a - c A_1A_2a = c_1(a_1 - a_2) - c_2 a_1 > 0$.
Hence, $E_2^3 \subseteq \{A_1^2a, A_2A_1a\}$.

Now suppose that $E_t^3 \subseteq \{A_1^ta, A_1^{t-2}A_2A_1a\}$ for some $t\ge 2$.
We want to show that $E_{t+1}^3 \subseteq \{A_1^{t+1}a, A_1^{t-1}A_2A_1a\}$.
We assume that $t$ is even (a similar argument can be used to prove the result when $t$ is odd).
Similar to the proof of~\eqref{linear_program} in Lemma~\ref{lemma:A1A2:Q1:Q1}, we have
$E_{k}^3 \subseteq A_1E_{k-1}^3 \cup A_2E_{k-1}^3$ for $k\ge 2$. 
Then by the induction hypothesis, we have $E_{t+1}^3 \subseteq \{A_1^{t+1}a, A_1^{t-1}A_2A_1a, A_2A_1^{t}a, A_2A_1^{t-2}A_2A_1a \}$.
Since $t$ is even, $A_1^ta = a$ and $A_1^{t - 2}A_2A_1a = (a_1 + a_2, a_1)^{\top}$.
For any $c \in \inte{\mathcal{Q}_3}$,
      $c A_1^{t+1}a - c A_2A_1^{t}a = -c_1a_1 + c_2(a_1-a_2) > 0$, and
         $c A_1^{t+1}a - c A_2A_1^{t-2}A_2A_1a = -2c_1a_1 > 0$.

Hence, $E_{t+1}^3 \subseteq \{A_1^{t+1}a, A_1^{t-1}A_2A_1a\}$.
We conclude that $|E_k^3| \leq 2$ for any integer $k \geq 2$.
\end{proof}

 \begin{lemma}\label{lemma:A1A2:Q1:Q24}
For any $a \in \inte{\mathcal{Q}_1}$ and integer $k \ge 2$, $|E_{k}^4(\Sigma_1, a)| \leq |E_{k-1}^4(\Sigma_1, a)| + |E_{k-1}^1(\Sigma_1, a)| + 2$ and $|E_{k}^2(\Sigma_1, a)| \leq |E_{k-1}^4(\Sigma_1, a)|$.
\end{lemma}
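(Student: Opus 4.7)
The plan is to analyze both inequalities via the standard recursion $E_k\subseteq A_1E_{k-1}\cup A_2E_{k-1}$, classifying each candidate $v=A_jy\in E^i_k$ by which quadrant the direction $cA_j$ lies in for a certifying $c\in\inte{\mathcal{Q}_i}$. For $c\in\inte{\mathcal{Q}_2}$ we have $cA_1=(c_2,c_1)\in\inte{\mathcal{Q}_4}$ and $cA_2=(c_1,c_1+c_2)\in\inte{\mathcal{Q}_2}\cup\inte{\mathcal{Q}_3}$ (or on the negative $x_1$-axis when $c_1+c_2=0$); analogous calculations for $c\in\inte{\mathcal{Q}_4}$ give
\[
E^2_k\subseteq A_1E^4_{k-1}\cup A_2(E^2_{k-1}\cup E^3_{k-1}\cup E^0_{k-1}),\qquad E^4_k\subseteq A_1E^2_{k-1}\cup A_2(E^1_{k-1}\cup E^4_{k-1}\cup E^0_{k-1}).
\]
Each bound then reduces to controlling the ``wrong-quadrant'' contributions in these unions.

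For $|E^2_k|\le|E^4_{k-1}|$, the plan is to rule out the $A_2$-branch entirely by exhibiting a single point in $A_1 E^4_{k-1}$ that strictly dominates every $A_2$-image in every direction $c\in\inte{\mathcal{Q}_2}$. Concretely, take $\tilde y := A_2^{k-1}a=(a_1+(k-1)a_2,a_2)$, the rightmost extreme point of $P_{k-1}$ (so $\tilde y\in E^4_{k-1}$), and let $\tilde v:=A_1\tilde y=(a_2,\,a_1+(k-1)a_2)$. For any $y\in P_{k-1}\subset\mathcal{Q}_1$ and any $c\in\inte{\mathcal{Q}_2}$,
\[
c\tilde v-c(A_2 y)=c_1\bigl(a_2-y_1-y_2\bigr)+c_2\bigl(a_1+(k-1)a_2-y_2\bigr).
\]
Two easy inductions on $k$ establish (i) $y_1+y_2\ge a_1+a_2>a_2$, since $A_1$ preserves $y_1+y_2$ and $A_2$ increases it by $y_2\ge 0$, so $y_1+y_2$ is non-decreasing along any trajectory starting at $a$, and (ii) $a_1+(k-1)a_2-y_2\ge \min(a_1,a_2)>0$, using that the maximum $Y(k-1)$ of $y_2$ over $P_{k-1}$ satisfies $Y(k-1)\le a_1+(k-2)a_2$. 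With $c_1<0<c_2$, both bracketed terms are then strictly positive, so $\tilde v$ strictly dominates every $A_2 y$, forcing $E^2_k\subseteq A_1 E^4_{k-1}$.

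For $|E^4_k|\le|E^4_{k-1}|+|E^1_{k-1}|+2$, the terms $A_2 E^4_{k-1}$ and $A_2 E^1_{k-1}$ account for the first two summands. The boundary term $A_2 E^0_{k-1}$, corresponding to the direction $cA_2=(c_1,0)$ with $c_1>0$, selects only the rightmost extreme of $P_{k-1}$ and contributes at most one point to $E^4_k$. For the remaining term $A_1 E^2_{k-1}$, the plan is to adapt the partition argument of Lemma~\ref{lemma:A1A2:Q1:Q1}: select a distinguished point $q\in E^2_{k-1}$ (the maximizer over $E^2_{k-1}$ of a suitably chosen linear functional), partition $E^2_{k-1}=T_1\cup\{q\}\cup T_2$ via a coordinate comparison with $q$, and show by case analysis on the sign of $c_1+c_2$ and on which side of $q$ the point $y$ lies that $A_1 y$ for $y\in T_1\cup T_2$ is strictly dominated in every $c\in\inte{\mathcal{Q}_4}$ by some $A_2 y^*$ with $y^*\in E^4_{k-1}\cup E^1_{k-1}$; only $A_1 q$ then survives, yielding the ``$+2$''. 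The principal difficulty is in executing this case analysis---selecting the right $q$ and verifying the pairwise dominance inequalities---which is in the spirit of the proof of Lemma~\ref{lemma:A1A2:Q1:Q1} but is more intricate, because $cA_2$ crosses between quadrants $\mathcal{Q}_1$ and $\mathcal{Q}_4$ as $c$ rotates through $\inte{\mathcal{Q}_4}$, and this transition is precisely what forces the two extra extreme points in the bound.
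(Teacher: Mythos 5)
There is a genuine gap in both halves of your argument. For $|E^2_k(\Sigma_1,a)|\le|E^4_{k-1}(\Sigma_1,a)|$, the single-point domination step fails: your inequality (ii), that the maximum of $y_2$ over $P_{k-1}(\Sigma_1,a)$ is at most $a_1+(k-2)a_2$, is false, because coordinates of points of $X_{k-1}(\Sigma_1,a)$ grow exponentially in $k$ along alternating products $\cdots A_2A_1A_2A_1$ (from $a=(1,1)^{\top}$ one reaches $(2,3)^{\top}$, $(5,3)^{\top}$, $(3,5)^{\top}$, $(8,5)^{\top},\dots$, i.e.\ Fibonacci-type growth), whereas the entries of $\tilde v=A_1A_2^{k-1}a$ are only linear in $k$. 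Taking $c=(-\epsilon,1)$ with $\epsilon$ small and $y$ an iterate with exponentially large $y_2$, one gets $cA_2y>c\tilde v$, so $\tilde v$ does not dominate the $A_2$-branch. (In addition, $A_2^{k-1}a$ need not be the rightmost extreme point of $P_{k-1}$: for $a=(100,1)^{\top}$ and $k-1=3$, $A_2^2A_1a=(201,100)^{\top}$ has a larger first coordinate than $A_2^3a=(103,1)^{\top}$.) The $A_2$-branch cannot be excluded by comparison with one fixed point; it has to be excluded pointwise. This is what the paper does with a two-step comparison: writing $\max\{cx\mid x\in P_k\}$ over the four products $A_iA_j$ applied to $P_{k-2}\subseteq\inte{\mathcal{Q}_1}$, one checks for $c\in\inte{\mathcal{Q}_2}$ that $cA_1A_2x>cA_2A_1x$ and $cA_1A_2x>cA_2A_2x$ for every such $x$, so the last matrix applied is never $A_2$; hence $\max\{cx\mid x\in P_k\}=\max\{cA_1x\mid x\in P_{k-1}\}$ with $cA_1=(c_2,c_1)\in\inte{\mathcal{Q}_4}$, giving $E^2_k\subseteq A_1E^4_{k-1}$ directly.

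For $|E^4_k(\Sigma_1,a)|\le|E^4_{k-1}(\Sigma_1,a)|+|E^1_{k-1}(\Sigma_1,a)|+2$, your one-step decomposition leaves the term $A_1E^2_{k-1}$, and the partition-and-dominance case analysis you propose to dispose of it is precisely the part you do not carry out; as written it is a plan rather than a proof, and without it the bound would involve $|E^2_{k-1}|$, which is not allowed on the right-hand side. The same two-step device removes the difficulty entirely: for $c\in\inte{\mathcal{Q}_4}$ one has $cA_2A_2x>cA_1A_1x$ and $cA_2A_2x>cA_1A_2x$ for all $x\in P_{k-2}$ with positive entries, so the maximizer always has $A_2$ applied last, i.e.\ $\max\{cx\mid x\in P_k\}=\max\{cA_2x\mid x\in P_{k-1}\}$ with $cA_2=(c_1,c_1+c_2)\in\mathcal{Q}_1\cup\mathcal{Q}_4$; hence $E^4_k\subseteq A_2\bigl(E^1_{k-1}\cup E^4_{k-1}\cup S\bigr)$, where $S$ consists of the at most two extreme points maximizing $x_1$ over $P_{k-1}$ (covering the direction with $c_1+c_2=0$). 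This yields the stated bound with no partition argument and in fact shows that no point of $A_1E^2_{k-1}$ survives at all.
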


\begin{proof}
To simplify the notations, we omit the dependence of $\Sigma_1$ and $a$ in the rest of the proof.
We first prove that $|E_{k}^4| \leq |E_{k-1}^4| + |E_{k-1}^1| + 2$.
Note that
  \begin{align*}
  \max\{c x \mid x \in P_k\} = \max\{&\max\{c A_1A_1x \mid x \in P_{k-2}\},
                                          \max\{c A_1A_2x \mid x \in P_{k-2}\},\\
                                          &\max\{c A_2A_1x \mid x \in P_{k-2}\},
                                          \max\{c A_2A_2x \mid x \in P_{k-2}\}
                                          \}.
  \end{align*}
Since $P_{k-2} \subseteq \inte{\mathcal{Q}_1}$, for any $c$ with $c_1>0$ and $c_2<0$ and $x \in P_{k-2}$,
		$c A_2^2x = (c_1, 2c_1 + c_2)x > (c_1, c_2)x = c A_1^2x,
    c A_2^2x = (c_1, 2c_1 + c_2)x > (c_2, c_1 + c_2)x = c A_1A_2x$.
Therefore,
  $\max\{c x \mid x \in P_k\} = \max\{\max\{c A_2A_1x \mid x \in P_{k-2}\},\max\{c A_2A_2x \mid x \in P_{k-2}\}  \}
 = \max\{c A_2x \mid x\in P_{k-1}\}$.
Now that $c A_2 = (c_1, c_1 + c_2)$ is a vector in the first or the fourth quadrant, the maximizers of $\max\{c x \mid x \in P_k\}$ must be in $A_2E_{k-1}^1 \cup A_2E_{k-1}^4 \cup A_2S$, where $S$ is the set of extreme points of $P_{k-1}$ that are maximizers of $\max\{x_1 \mid x \in P_{k-1}\}$.
Therefore, $|E_k^4| \leq |A_2E_{k-1}^4| + |A_2E_{k-1}^1| + |A_2S| \le |E_{k-1}^4| + |E_{k-1}^1| + 2$.

To prove that $|E_{k}^2| \leq |E_{k-1}^4|$, consider $c = (c_1, c_2)$ with $c_1 < 0$ and $c_2 > 0$.
For any $x \in P_{k-2}$,
  \begin{align*}
    c A_1A_2x &= (c_2, c_1 + c_2)x >  (c_1 + c_2, c_1)x = c A_2A_1x, \\
    c A_1A_2x &= (c_2, c_1 + c_2)x >  (c_1 , 2c_1 + c_2)x = c A_2A_2x.
  \end{align*}
Thus we have $\max\{c x \mid x \in P_k\}= \max\{\max\{c A_1A_1x \mid x \in P_{k-2}\},
                                          \max\{c A_1A_2x \mid x \in P_{k-2}\}\}
																				= \max\{c A_1x \mid x \in P_{k-1}\}$.
%
%
Since $c A_1 = (c_2, c_1)$ is a vector in the interior of the fourth quadrant, the optimal solutions of $\max\{cx \mid x \in P_k\}$ must be in $A_1E_{k-1}^4$.
Therefore, $|E_k^2| \leq |E_{k-1}^4|$.
\end{proof}

Now we are ready to prove Proposition~\ref{prop:A1A2:Q1},
\begin{proof}[Proof of Proposition~\ref{prop:A1A2:Q1}]
We only need to prove the case where $a\in \inte{\mathcal{Q}_1}$.
When $a \in \inte{\mathcal{Q}_3}$, it is easy to verify that $N_k(\Sigma_1, a) = N_k(\Sigma_1, -a)$.
By Lemma~\ref{lemma:A1A2:Q1:Q1} and Lemma~\ref{lemma:A1A2:Q1:Q3}, we have $|E_k^1(\Sigma_1, a)| \leq k  + 1$ and $|E_k^3(\Sigma_1, a)| \leq 2$ for any $a \in \inte{\mathcal{Q}_1}$ and integer $k \ge 2$.
By Lemma \ref{lemma:A1A2:Q1:Q24}, for any $a \in \inte{\mathcal{Q}_1}$ and integer $k\ge 3$, $
  |E_k^4(\Sigma_1, a)|  \leq |E_{k-1}^4(\Sigma_1, a)| + |E_{k-1}^1(\Sigma_1, a)| + 2
              \leq |E_{k-1}^4(\Sigma_1, a)| + (k + 2) 
              \leq |E_{2}^4(\Sigma_1, a)| + \sum_{i = 2}^{k-1} (i + 3)
              \leq \frac{1}{2}k^2 + \frac{5}{2}k -3$,
and $|E_k^2(\Sigma_1, a)| \le |E_{k-1}^4(\Sigma_1, a)| \le \frac{1}{2}k^2 + \frac{3}{2}k -5$.
Therefore, $N_k(\Sigma_1, a)  \leq |E_k^1(\Sigma_1, a)| + |E_k^2(\Sigma_1, a)| + |E_k^3(\Sigma_1, a)| +|E_k^4(\Sigma_1, a)| + |E_k^0(\Sigma_1, a)| = O(k^2)$.
\end{proof}
\noindent The conclusion $N_k(\Sigma_1, a)=O(k^2)$ can be easily extended to the case where $a$ is on the boundary of the first or third quadrant.
\begin{proof}[Proof of Proposition~\ref{prop:A1A2:Q1boundary}]
We only need to prove the case where $a \in \partial{\mathcal{Q}_1}$.
The case where $a \in \partial{\mathcal{Q}_3}$ follows from the fact $N_k(\Sigma_1, a)=N_k(\Sigma_1, -a)$.
We first prove the result when $a$ is on the positive $x_1$-axis.
Without loss of generality, assume that $a = (1,0)^\top$.
We claim that for any integer $k \ge 3$,
\[X_k(\Sigma_1, (1,0)^{\top}) = X_{k-2}(\Sigma_1, (1,1)^{\top}) \cup \{(1,0)^\top, (0,1)^\top\}.\]
To see this, consider any value of $x(k)$ in $X_k(\Sigma_1, (1,0)^{\top})$ that is different from $(1,0)^\top$ and $(0,1)^\top$.
Since $A_1^ta = (0,1)^\top$ for odd integer $t\ge 1$, $A_1^ta =(1,0)^\top$ for even integer $t \ge 1$, $A_2^ta  = (1,0)^\top$ for any integer $t \ge 1$, and $A_2A_1a=(1,1)^{\top}$.
For $x(k)$ to take a value different from $(0,1)^\top$ and $(1,0)^\top$, $x(k)$ must be in the form of $T_{k-1}\cdots T_l x(l)$ with $T_j \in \Sigma_1$ for $j\in [l:k-1]$ and $x(l)=(1,1)^{\top}$ for some $l \ge 2$.
But when $x(l)=(1,1)^{\top}$, we have $A_1^jx(l) = x(l)$ for any integer $j \ge 1$. Then $x(k) = T_{k-1}\cdots T_l A_1^{l-2}x(l)$, which is a point in $X_{k-2}(\Sigma_1, (1,1)^{\top})$.
Thus $X_k(\Sigma_1, (1,0)^{\top}) \subseteq X_{k-2}(\Sigma_1, (1,1)^{\top}) \cup \{(1,0)^\top, (0,1)^\top\}$.
On the other hand, given a point in $X_{k-2}(\Sigma_1, (1,1)^{\top})$ written in the form of $T_{k-3}\cdots T_0 (1,1)^{\top}$ with $T_j \in \Sigma_1$ for $j \in [0:k-3]$, we can also write it in the form of $T_{k-3}\cdots T_0 A_2 A_1 (1,0)^{\top}$.
Thus $X_k(\Sigma_1, (1,0)^{\top}) \supseteq X_{k-2}(\Sigma_1, (1,1)^{\top}) \cup \{(1,0)^\top, (0,1)^\top\}$.
Therefore, $N_k(\Sigma_1, (1,0)^{\top}) \le N_{k-2}(\Sigma_1, (1,1)^{\top}) + 2 = O(k^2)$.
The last equality follows from Proposition~\ref{prop:A1A2:Q1}.
The case where $a$ is on the positive $x_2$-axis can be proved similarly.
%
\end{proof}

We proceed to prove Proposition~\ref{prop:A1A2:Q2}.
Let $X^{2,4}_k(\Sigma_1, a)$ be the set of points in $X_k(\Sigma_1,a)$ that are in the interior of the second or fourth quadrant, i.e.,
\[X^{2,4}_k(\Sigma_1, a) = X_k(\Sigma_1, a) \cap (\inte{\mathcal{Q}_2} \cup \inte{\mathcal{Q}_4)}.\]
\begin{lemma}\label{lemma:A1A2:Q4}
For any $a \in \inte{\mathcal{Q}_4}$ and integer $k \geq 2$, $X^{2,4}_k(\Sigma_1, a)$ contains no more than $4k+4$ points.
\end{lemma}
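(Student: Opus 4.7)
The plan is to exploit the Euclidean-algorithm structure of $\{A_1,A_2\}$ acting on $\inte{\mathcal{Q}_2}\cup\inte{\mathcal{Q}_4}$.

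First I would verify that $\mathcal{Q}_1\cup\mathcal{Q}_3$ is invariant under both $A_1$ (which just permutes coordinates) and $A_2(x_1,x_2)=(x_1+x_2,x_2)$ (which preserves nonnegativity of both entries). Consequently, once a trajectory leaves $\inte{\mathcal{Q}_2}\cup\inte{\mathcal{Q}_4}$ it never returns, so every $y\in X^{2,4}_k(\Sigma_1,a)$ is the endpoint of a trajectory that lies in $\inte{\mathcal{Q}_2}\cup\inte{\mathcal{Q}_4}$ throughout. Within this region, $A_1$ swaps $\mathcal{Q}_2\leftrightarrow\mathcal{Q}_4$, while $A_2$ preserves the quadrant (unless it exits) and shrinks the larger-magnitude coordinate, just like a Euclidean step. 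Tracking the ratio $t=|x_1|/|x_2|>0$, one finds $A_2\colon t\mapsto t-1$ and $A_1\colon t\mapsto 1/t$, so the dynamics on the interior is exactly the continued-fraction (CF) algorithm applied to $t_0=a_1/|a_2|$.

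Next, I would describe the reachable set in $\inte{\mathcal{Q}_2}\cup\inte{\mathcal{Q}_4}$ as a \emph{main chain} plus \emph{side points}. The main chain is the orbit that greedily applies $A_2$ until one further application would make $t\le 0$, then applies $A_1$, and repeats through the CF levels. The side points are obtained by applying $A_1$ at a non-terminal main-chain position. A direct coordinate computation in each quadrant and at each CF level shows every side point is a dead end: from a side point $A_1$ returns to the originating main-chain point, while $A_2$ sends the trajectory into $\mathcal{Q}_1\cup\mathcal{Q}_3$ or onto an axis, exiting the interior.

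Finally, I would label each reachable point by a canonical step index: for a main-chain point its position along the chain (taking consecutive integer values $0,1,2,\ldots$), and for a side point its parent's index plus one. A point with canonical index $j$ is reachable in exactly $k$ steps if and only if $k\ge j$ and $k\equiv j\pmod 2$, since the only way to prolong a trajectory without changing its endpoint is to insert the cycle $A_1A_1=I$, which costs two steps and is always legal because $A_1$ preserves $\inte{\mathcal{Q}_2}\cup\inte{\mathcal{Q}_4}$. Since at most $\lfloor k/2\rfloor+1$ main-chain indices and at most $\lfloor k/2\rfloor+1$ side indices satisfy these constraints, $|X^{2,4}_k(\Sigma_1,a)|\le k+2\le 4k+4$. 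The main obstacle will be the sign analysis needed to confirm the dead-end property of side points at every CF level, which must be checked separately in $\mathcal{Q}_2$ and $\mathcal{Q}_4$; the rational and irrational cases of $t_0$ are handled in parallel, the only difference being whether the main chain terminates by hitting an axis.
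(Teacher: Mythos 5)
Your proposal is correct in substance and rests on the same underlying phenomenon as the paper's proof (the subtractive Euclidean algorithm acting on the two coordinate magnitudes), but it organizes the count quite differently. The paper indexes each interior point by the number of $A_2$ factors in its defining product: it defines $u_0=\max\{1,-a_2\}$, $v_0=\min\{1,-a_2\}$, the recursion $u_{j}=\max\{v_{j-1},u_{j-1}-v_{j-1}\}$, $v_{j}=\min\{v_{j-1},u_{j-1}-v_{j-1}\}$, and the four-point sets $S_t=\{(u_t,-v_t)^\top,(-u_t,v_t)^\top,(v_t,-u_t)^\top,(-v_t,u_t)^\top\}$, then proves by a single induction on $k$ that any point of $X^{2,4}_k(\Sigma_1,a)$ built with exactly $t$ copies of $A_2$ lies in $S_t$; the bound $4k+4$ follows immediately since $t\le k$, with no need for reachability, dead-end, or parity arguments. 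You instead build a canonical orbit (the continued-fraction ``main chain'') plus one-step $A_1$ deviations, and count which indices are attainable in exactly $k$ steps. That route works, and in fact gives the sharper bound $|X^{2,4}_k|\le k+2$, but it requires more structural bookkeeping that your sketch asserts rather than proves: the dead-end computation for side points (immediate, since a point with $|x_1|<|x_2|$ and opposite signs is sent by $A_2$ into $\mathcal{Q}_1\cup\mathcal{Q}_3$), and, for your ``reachable in exactly $k$ steps iff $k\ge j$ and $k\equiv j\ (\mathrm{mod}\ 2)$'' claim, the facts that the endpoint determines the number of $A_2$'s (e.g.\ because $|x_1|+|x_2|$ is preserved by $A_1$ and strictly decreased by $A_2$ inside $\inte{\mathcal{Q}_2}\cup\inte{\mathcal{Q}_4}$, so no walk can revisit an earlier chain level) and that the endpoint's quadrant fixes the parity of the number of $A_1$'s. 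With those details written out your argument is complete; the paper's version is shorter, while yours yields a linear-in-$k$ constant closer to the behavior conjectured in the paper's final open question.
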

\begin{proof}
Without loss of generality, assume $a = (1, a_2)^{\top}$ with $a_2 < 0$.
Let $u_0 = \max\{1, -a_2\}$ and $v_0 = \min\{1, -a_2\}$.
Define the following sequence of non-negative numbers recursively:
$u_{j} = \max\{v_{j-1}, u_{j-1} - v_{j-1}\}$ and $v_{j} = \min\{v_{j-1}, u_{j-1} - v_{j-1}\}$ for $j \in [1:k]$.
\sloppy For each $t \in [0:k]$, define $S_t=\{(u_t, -v_t)^\top, (-u_t, v_t)^\top, (v_t, -u_t)^\top, (-v_t, u_t)^\top\}$.

Given any $s^k \in X^{2,4}_k(\Sigma_1, a)$, assume that $s^k=T_{k-1}\cdots T_0a$ with $T_j \in \Sigma_1$ for $j \in [0:k-1]$.
We claim that for any integer $k \ge 0$, if $t$ out of the $k$ matrices $T_0, \cdots, T_{k-1}$ are $A_2$, then $s^k \in S_t$.
We prove the claim by induction on $k$.
First consider the base case $k=0$.
If $|a_2|\ge 1$, then $u_0=-a_2$ and $v_0=1$, so $s^k=a=(v_0, -u_0)^{\top} \in S_0$.
If $|a_2|<1$, then $u_0=1$ and $v_0=-a_2$, so $s^k=a=(u_0, -v_0)^{\top} \in S_0$.
Now suppose that the claim holds for integer $k=l \ge 0$. Specifically, $s^l=T_{l-1}\cdots T_0 a \in S_t$ if $t \in [0:l]$ out of the $l$ matrices $T_0, \cdots, T_{l-1}$ are $A_2$.
We want to prove that any point $s^{l+1}= T_{l+1}\cdots T_0 a$ in $X^{2,4}_{l+1}(\Sigma_1, a)$ also belongs to $S_t$, if $t \in [0:l+1]$ out of the $l+1$ matrices $T_0, \cdots, T_{l+1}$ are $A_2$.
If $T_{l+1} = A_1$, then $t$ out of the $l$ matrices $T_l, \ldots, T_0$ are $A_2$.
Based on the induction hypothesis, the point $s=T_l\cdots T_0a \in S_t$.
Since $A_1S_t = S_t$, $s^{l+1}=A_1s$ must be in $S_t$ as well.
If $T_{l+1} = A_2$, then $(t-1)$ out of the $l$ matrices $T_l, \ldots, T_0$ are $A_2$.
Based on the induction hypothesis, the point $s=T_l\cdots T_0a \in S_{t-1}$.
The set $S_t$ contains four points.
We consider one case $s=(u_{t-1}, -v_{t-1})^\top$ here, and the result for the other cases can be proved similarly.
We have $s^{l+1}=A_2s=(u_{t-1}-v_{t-1}, -v_{t-1})^{\top}$.
Since $s^{l+1}$ is in the interior of second or fourth quadrant and $-v_{t-1}<0$, we must have $u_{t-1}-v_{t-1} >0$.
If $v_{t-1} \ge u_{t-1}-v_{t-1}$, then $u_t= v_{t-1}$, $v_t=u_{t-1}-v_{t-1}$, and $s^{l+1}=(v_t, -u_t)^{\top} \in S_t$.
If $v_{t-1} < u_{t-1}-v_{t-1}$, then $u_t= u_{t-1}-v_{t-1}$, $v_t=v_{t-1}$, and $s^{l+1}=(u_t, -v_t)^{\top} \in S_t$.
With the claim, we conclude that $X^{2,4}_k(\Sigma_1, a)$ contains at most $4k+4$ different points.
\end{proof}

\begin{proof}[Proof of Proposition~\ref{prop:A1A2:Q2}]
We omit the dependence of $\Sigma_1$ in the rest of the proof to simplify the notation.
Given a set $S \subseteq \Re^2$, define $X_k(S) = \cup_{a \in S} X_k(a)$.
First note that for any $x$ in the first (third) quadrant, $A_1x$ and $A_2x$ are both in the first (third) quadrant.
Thus the points in $X^{2,4}_{i+1}(a)$ can only be linear transformations of points in $X^{2,4}_{i}(a)$ under $A_1$ or $A_2$.
In addition, for any $x$ in the second or fourth quadrant, $A_1x$ is also in the second or fourth quadrant.
Therefore, for any integer $i \ge 0$,
$A_1X^{2,4}_i(a) \cup A_2X^{2,4}_i(a) = X^{2,4}_{i+1}(a) \cup (A_2X^{2,4}_i(a) \cap (\mathcal{Q}_1 \cup \mathcal{Q}_3))$.
Given any $a$ in the interior of the second quadrant, we have
\begin{equation}
\begin{split}
X_k(a) = &X_k(X^{2,4}_0(a))=X_{k-1}(A_1X^{2,4}_0(a) \cup A_2X^{2,4}_0(a)) \\
= &X_{k-1}(X^{2,4}_1(a)) \cup X_{k-1}(A_2X^{2,4}_0(a) \cap (\mathcal{Q}_1 \cup \mathcal{Q}_3)) \\
= &(X_{k-2}(X^{2,4}_2(a)) \cup X_{k-2}(A_2X^{2,4}_1(a) \cap (\mathcal{Q}_1 \cup \mathcal{Q}_3)))   \\ 
&\cup X_{k-1}(A_2X^{2,4}_0(a) \cap (\mathcal{Q}_1 \cup \mathcal{Q}_3)) \\
& \qquad \cdots \\
= &X_{l}(X^{2,4}_{k-l}(a)) \cup \cup_{j=l}^{k-1} X_{j}(A_2 X^{2,4}_{k-1-j}(a) \cap (\mathcal{Q}_1 \cup \mathcal{Q}_3)) 
\end{split}
\label{eq:A1A2:Q2}
\end{equation}
for any $l \ge 1$.

On the other hand, for any $x$ in the first or third quadrant, we have shown that there exists some integer $k_0$ and $\alpha>0$ such that $N_k(x) \le \alpha k^2$ for any integer $k \ge k_0$.
Setting $l=k_0$ in equation~\eqref{eq:A1A2:Q2} we have $X_k(a)= X_{k_0}(X^{2,4}_{k-k_0}(a)) \cup \cup_{j=k_0}^{k-1} X_{j}(A_2 X^{2,4}_{k-1-j}(a) \cap (\mathcal{Q}_1 \cup \mathcal{Q}_3))$.
Therefore,
\begin{align*}
N_k(a) &\le  |X_{k_0}(X^{2,4}_{k-k_0}(a))| + \sum_{j=k_0}^{k-1} \sum_{x \in A_2 X^{2,4}_{k-1-j}(a) \cap (\mathcal{Q}_1 \cup \mathcal{Q}_3)} N_j(x)\\
& \le \sum_{x\in X^{2,4}_{k-k_0}(a)} |X_{k_0}(x)| + \sum_{j=k_0}^{k-1} |A_2 X^{2,4}_{k-1-j}(a)| \alpha j^2\\
& \le |X^{2,4}_{k-k_0}(a)| 2^{k_0} + \sum_{j=k_0}^{k-1} | X^{2,4}_{k-1-j}(a)|\cdot \alpha j^2\\
& \le (4k-4k_0+4)2^{k_0} + \sum_{j=k_0}^{k-1} (4k-4j) \alpha j^2 \le \beta k^4,
\end{align*}
for some constant $\beta$.
The second last inequality follows from Lemma~\ref{lemma:A1A2:Q4}.
Therefore $N_k(a)=O(k^4)$.
\end{proof}

\subsection{$\Sigma_2=\{A_1, A_4\}$} 
In this section, we will prove that $N_k(\Sigma_2) = O(k)$.
\begin{lemma}\label{lemma:A1A4}
Given any polytope $P \subseteq  \Re \times \Re_+$ or $P \subseteq  \Re \times \Re_-$, the number of extreme points of $\conv{P \cup A_2P}$ is at most two more than the number of extreme points of $P$.
\end{lemma}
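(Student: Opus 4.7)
The plan is to exploit the shear structure of $A_2 = \begin{bmatrix} 1 & 1 \\ 0 & 1 \end{bmatrix}$, which preserves the second coordinate and shifts the first coordinate by the value of the second. I assume without loss of generality that $P \subseteq \Re \times \Re_+$; the case $P \subseteq \Re \times \Re_-$ is analogous, since then $A_2$ shifts points leftward by $|x_2|$ and the roles of left and right chains swap. Let the $x_2$-range of $P$ be $[h_{\min}, h_{\max}] \subseteq \Re_+$. For each $h$ in this range, write the horizontal slice of $P$ at height $h$ as $[\alpha(h), \beta(h)]$, where $\alpha$ is piecewise-linear convex and $\beta$ is piecewise-linear concave by the convexity of $P$. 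Then the slice of $A_2 P$ at $h$ is $[\alpha(h)+h, \beta(h)+h]$; since $h \ge 0$, the two intervals overlap or touch, and the slice of $\conv{P \cup A_2 P}$ at $h$ is exactly $[\alpha(h), \beta(h)+h]$.

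With this slice description I would identify the extreme points of the hull. Every such extreme point belongs to $\ext{P} \cup A_2 \ext{P}$. The left boundary of $\conv{P \cup A_2 P}$ coincides with the graph of $\alpha$, i.e., the left boundary of $P$, so precisely the ``left-chain'' extreme points of $P$ (the breakpoints of $\alpha$, including its endpoints at $h_{\min}$ and $h_{\max}$) remain extreme in the hull. Symmetrically, the right boundary of the hull is the graph of $h \mapsto \beta(h)+h$, whose breakpoints are $A_2$ applied to the ``right-chain'' extreme points of $P$ (the breakpoints of $\beta$). Any right-chain vertex $p$ of $P$ with $p_2 > 0$ lies strictly between $\alpha(p_2)$ and $\beta(p_2)+p_2$, so it is no longer extreme in the hull; symmetrically, $A_2 p$ is not extreme for a left-chain vertex $p$ with $p_2 > 0$.

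It then remains to count. Let $L$ and $R$ denote the number of extreme points of $P$ on its left and right chains respectively (each chain including the top and bottom corners), so that $|\ext{P}| = L + R - s$, where $s \in \{0,1,2\}$ counts how many of the two extreme slices (at $h_{\min}$ and $h_{\max}$) degenerate to a single point---in which case the left and right chains of $P$ share that vertex. The analysis above gives $|\ext{\conv{P \cup A_2 P}}| = L + R - t$, where $t \in \{0,1\}$ equals $1$ precisely when $h_{\min} = 0$ and the bottom slice of $P$ is a single point, since that is the unique way $A_2$ can fix a vertex, producing a coincidence between the left chain of $P$ and the $A_2$-image of the right chain. Hence $|\ext{\conv{P \cup A_2 P}}| - |\ext{P}| = s - t \le 2$. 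The main obstacle will be the careful bookkeeping for these boundary effects: verifying that the $h_{\min}=0$ degenerate-bottom scenario is the only way a left-chain vertex of $P$ can equal an $A_2$-image of a right-chain vertex (ruled out elsewhere by $p_2 > 0$ forcing $\alpha(p_2) < \beta(p_2)+p_2$), and separately handling the degenerate cases where $P$ is a single point, a vertical segment, or a horizontal segment.
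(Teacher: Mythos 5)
Your proposal is correct in substance and arrives at the same geometric conclusion as the paper, but by a noticeably different formalization, so a comparison is worthwhile. The paper fixes the top and bottom vertices $r,s$ of $P$, splits the remaining vertices into a left chain $Q_1$ and a right chain $Q_2$, and exhibits explicit convex combinations (each discarded vertex $p$ is written as $\lambda A_2p+(1-\lambda)h$ for a point $h$ on the chord $rs$ at the same height) showing that all of $Q_2$ and $A_2Q_1$ are redundant; hence $\conv{P\cup A_2P}=\conv{Q_1\cup A_2Q_2\cup\{r,s,A_2r,A_2s\}}$ and the vertex count grows by at most $2$. You instead encode $P$ by its boundary functions $\alpha$ (convex) and $\beta$ (concave) and use that, since $x_2\ge 0$, every point of $P\cup A_2P$ satisfies $\alpha(x_2)\le x_1\le \beta(x_2)+x_2$, a convex system, so $\conv{P\cup A_2P}=\{h_{\min}\le x_2\le h_{\max},\ \alpha(x_2)\le x_1\le\beta(x_2)+x_2\}$; counting breakpoints of $\alpha$ and of $\beta(x_2)+x_2$ then recovers the same ``left chain of $P$, sheared right chain of $P$, at most two extra corner points'' picture, with the added benefit that your $s-t$ bookkeeping identifies exactly when the $+2$ is attained, whereas the paper's certificate argument avoids any boundary-function bookkeeping. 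One caveat: your justification of the slice formula via ``the two intervals overlap or touch'' is both false in general (at height $h$ the slices $[\alpha(h),\beta(h)]$ and $[\alpha(h)+h,\beta(h)+h]$ are disjoint whenever $h>\beta(h)-\alpha(h)$) and unnecessary---the correct reason, which you in effect use right afterwards, is the convexity of $\alpha$ and concavity of $\beta(x_2)+x_2$ giving the inequality description above, from which the slice identity follows regardless of overlap. With that repair, and the separate treatment of degenerate $P$ (singleton or segment, as in the paper), your count $|\ext{\conv{P\cup A_2P}}|-|\ext{P}|\le 2$ is sound.
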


\begin{proof}
We first prove the case in which $P \subseteq  \Re \times \Re_+$.
The result is easy to show if $P$ is a singleton or a line segment.
Now suppose $P$ is full dimensional.
Let $r=(r_1, r_2)^{\top}$ be the extreme point of $P$ with the largest $x_2$-coordinate; if there are two such extreme points, let $r$ be the one with a larger $x_1$-coordinate.
Similarly, let $s=(s_1, s_2)^{\top}$ be the extreme point of $P$ with the smallest $x_2$-coordinate; let $s$ be the one with a larger $x_1$-coordinate if there are two such extreme points.
Divide the extreme points of $P$ other than $r$ and $s$ into two sets: (1) Set $Q_1$ consisting of extreme points visited if we walk clockwise along the boundary of $P$ from $s$ to $r$; (2) Set $Q_2$ consisting of extreme points visited if we walk clockwise along the boundary of $P$ from $r$ to $s$.
Let $R = \{r,s, A_2r, A_2s\}$. Since $\ext{P}=Q_1\cup Q_2 \cup \{r,s\}$, the possible extreme points of $\conv{P \cup A_2P}$ are among $Q_1$, $Q_2$, $A_2Q_1$, $A_2Q_2$, and $R$.

We claim that any point in $Q_2$ can be represented as a convex combination of points in $Q_1 \cup A_2Q_2 \cup R$.
To see this, first consider any point $p = (p_1, p_2)^\top \in Q_2$.
By the definition of $Q_2$, we have $p_2>0$ and there exists a point $h = (h_1, h_2)^\top$ on the line segment connecting $r$ and $s$ such that $h_1 < p_1$ and $h_2 = p_2$.
See the illustration in Figure~\ref{fig:A1A4}.
We can verify that $p=\lambda A_2p+ (1-\lambda)h$ with $\lambda = \frac{p_1 - h_1}{p_1 + p_2 - h_1} \in (0,1)$.
Thus $p$ can be represented as a convex combination of $A_2p$ and $h$.
Since $h$ can also be represented by a convex combination of $r$ and $s$, $p$ can be represented as a convex combination of $A_2p$, $r$, and $s$.
Therefore, we show that any point in $Q_2$ is a convex combination of points in $Q_1 \cup A_2Q_2 \cup R$.
\begin{figure}[ht]
  \centering
  \includegraphics[width=2.7in]{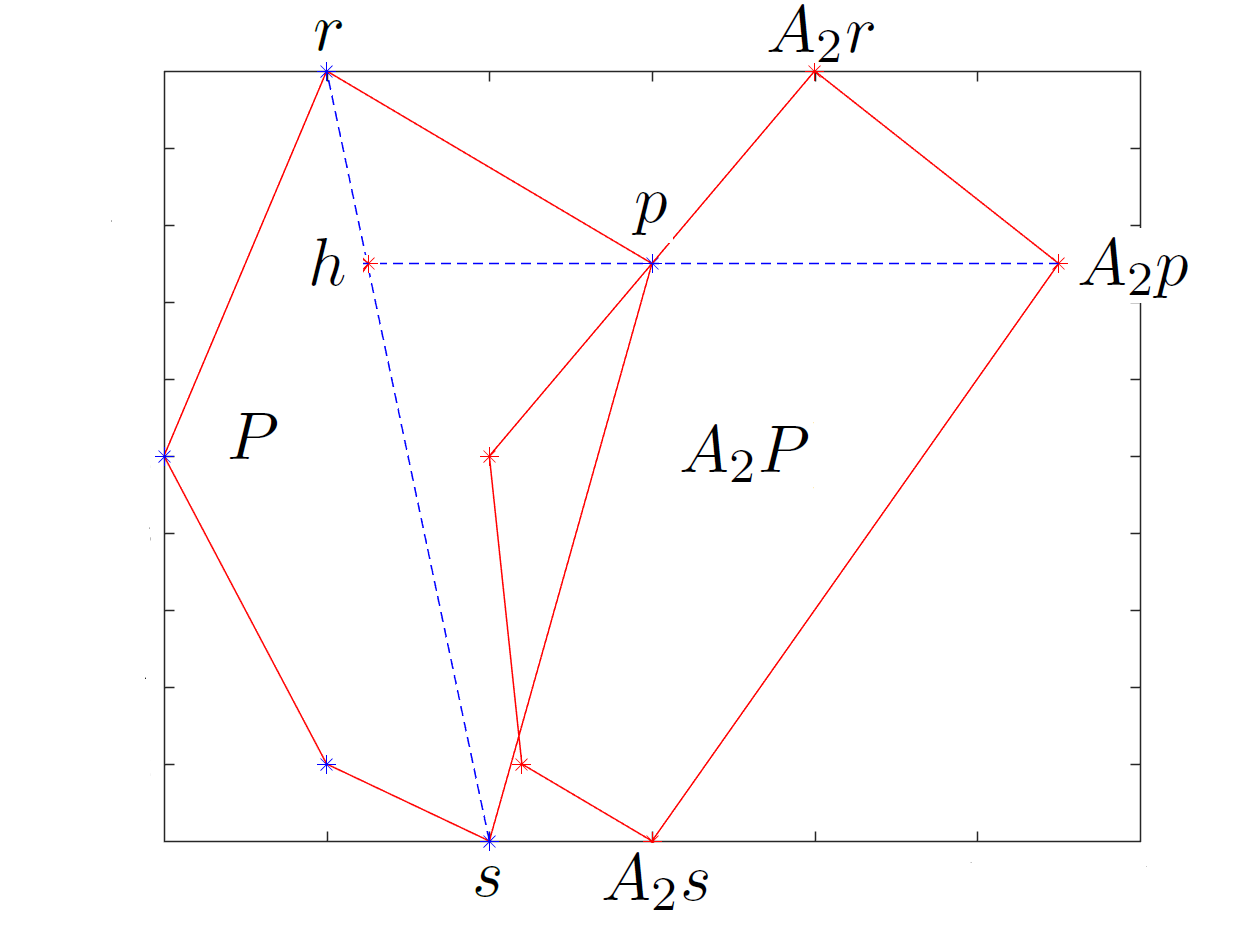}
  \caption{Point $p$ is a convex combination of $r$, $s$, and $A_2p$.}
	\label{fig:A1A4}
\end{figure}

%

Similarly, we can show that any point in $A_2Q_1$ is a convex combination of points in $Q_1 \cup A_2Q_2 \cup R$.
Then we have $\conv{P \cup A_2P} = \conv{Q_1 \cup A_2Q_2 \cup R}$. Thus
$|\ext{\conv{P \cup A_2P}}| = |\ext{\conv{Q_1 \cup A_2Q_2 \cup R}}| 
                               \leq |Q_1| + |A_2Q_2| + |R|
															 \le (|Q_1| + |Q_2| + |\{r, s\}|) + 2 
                               \leq |\ext{P}| + 2$.
The result for any $P \subseteq \Re \times \Re_-$ can be proved similarly.
\end{proof}

\begin{proposition}
The pair $\Sigma_2$ has the \polyv{} property and $N_k(\Sigma_2) = O(k)$.											
\end{proposition}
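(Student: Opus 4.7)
The plan is to exploit the factorization $A_4 = A_2 A_1$ together with Lemma~\ref{lemma:A1A4} to obtain a one-step recursion $|\ext{P_{k+1}}| \le |\ext{P_k}| + O(1)$ that holds uniformly in the initial vector $a$, which immediately yields $N_k(\Sigma_2) = O(k)$ without any case split on the quadrant of $a$.

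First I would rewrite the recursion. Since $A_4 = A_2 A_1$,
\[
P_{k+1}(\Sigma_2, a) = \conv{A_1 P_k(\Sigma_2, a) \cup A_4 P_k(\Sigma_2, a)} = \conv{R_k \cup A_2 R_k},
\]
where $R_k := A_1 P_k(\Sigma_2, a)$, and $|\ext{R_k}| = |\ext{P_k(\Sigma_2,a)}|$ since $A_1$ is nonsingular. Next I would split $R_k$ along the $x_1$-axis: let $R_k^+ := R_k \cap (\Re \times \Re_+)$ and $R_k^- := R_k \cap (\Re \times \Re_-)$. Both are polytopes with $R_k = R_k^+ \cup R_k^-$. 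A direct computation gives $A_2(x_1, x_2)^\top = (x_1 + x_2, x_2)^\top$, so $A_2$ preserves the sign of $x_2$; hence $R_k^\pm \cup A_2 R_k^\pm \subseteq \Re \times \Re_\pm$, and Lemma~\ref{lemma:A1A4} applies separately to each piece, yielding $|\ext{\conv{R_k^\pm \cup A_2 R_k^\pm}}| \le |\ext{R_k^\pm}| + 2$.

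To combine, note that every extreme point of $\conv{R_k \cup A_2 R_k}$ lies in $R_k^+ \cup A_2 R_k^+$ or in $R_k^- \cup A_2 R_k^-$, and is extreme in the convex hull of that piece, so $|\ext{P_{k+1}(\Sigma_2,a)}| \le |\ext{R_k^+}| + |\ext{R_k^-}| + 4$. A convex polytope in $\Re^2$ meets a line in a segment with at most two endpoints, so the cut of $R_k$ by the $x_1$-axis introduces at most $4$ additional extreme points across $R_k^+$ and $R_k^-$ combined, hence $|\ext{R_k^+}| + |\ext{R_k^-}| \le |\ext{R_k}| + 4$. Putting these together yields $|\ext{P_{k+1}(\Sigma_2,a)}| \le |\ext{P_k(\Sigma_2,a)}| + 8$, and induction on $k$ gives $N_k(\Sigma_2, a) \le 1 + 8k$ for every $a \in \Re^2$, so $N_k(\Sigma_2) = O(k)$. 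The main obstacle is the careful bookkeeping in the splitting step: one must avoid double-counting extreme points of $R_k$ that already lie on the $x_1$-axis while correctly counting the at most two new cut points appearing in each of $R_k^+$ and $R_k^-$, and also verify the argument in degenerate cases (when $R_k$ is a point, segment, or lies entirely on the axis). This is elementary in $\Re^2$, but the approach does not extend to higher dimensions without additional work.
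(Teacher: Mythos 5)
Your proposal is correct and follows essentially the same route as the paper's proof: factor $A_4 = A_2A_1$, split $A_1P_k(\Sigma_2,a)$ along the $x_1$-axis into its upper and lower halves (costing at most four extra extreme points), apply Lemma~\ref{lemma:A1A4} to each half, and combine to get $N_{k+1}(\Sigma_2,a) \le N_k(\Sigma_2,a) + 8$ uniformly in $a$. The only differences are cosmetic (your explicit check that $A_2$ preserves the sign of $x_2$, and starting the induction at $k=0$ rather than $k=2$).
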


\begin{proof}
To simplify the notation, we omit the dependence of $\Sigma_2$ in $N_k(\Sigma_2, a)$ and $P_k(\Sigma_2, a)$ in the rest of this proof.
We claim that $N_{k+1}(a) \leq N_k(a) + 8$ for any $a \in \Re^2$ and integer $k \ge 2$.
Then $N_k(a) \le N_{k-1}(a)+8 \le \cdots \le N_2(a) + 8(k-2) \le 4+8(k-2) = 8k-12$.
Thus $N_k = O(k)$.

To prove the claim, first observe that $P_{k+1}(a) = \conv{A_1P_k(a) \cup A_4P_k(a)} = \conv{A_1P_k(a) \cup A_2A_1P_k(a)}$.
%
Define $P^+ = A_1P_k(a) \cap \{x \mid x_2 \geq 0\}$ and $P^- = A_1P_k(a) \cap \{x \mid x_2 \leq 0\}$.
Notice that $P^+$ is a polytope in $\Re\times \Re_+$ and $P^-$ is polytope in $\Re \times \Re_-$, and $|\ext{P^+}| +  |\ext{P^-}| \leq |\ext{A_1P_k(a)}| + 4 = N_k(a) + 4$.
The first inequality above follows from the fact the line $x_2=0$ may introduce two new extreme points for both $P^+$ and $P^-$.
On the other hand,
\begin{align*}
 P_{k+1}(a) &=\conv{A_1P_k(a) \cup A_2A_1P_k(a)}\\
&=\conv{P^+ \cup P^- \cup A_2(P^+ \cup P^-)}\\
&=\conv{\conv{P^+ \cup A_2P^+} \cup \conv{P^- \cup A_2P^-}}.
\end{align*}
Thus $N_{k+1}(a) \le |\ext{\conv{\conv{P^+ \cup A_2P^+}}}| + |\ext{\conv{P^- \cup A_2P^-}}|$.
%
By Lemma \ref{lemma:A1A4}, $|\ext{\conv{P^+ \cup A_2P^+}}| \leq |\ext{P^+}| + 2$ and $|\ext{\conv{P^- \cup A_2P^-}}| \leq |\ext{P^-}| + 2$.
Thus we have $N_{k+1}(a) \leq |\ext{\conv{P^+}}| + 2 + |\ext{\conv{P^-}}| + 2 \leq N_k(a) + 8$.
\end{proof}

\subsection{$\Sigma_3=\{A_2, A_3\}$}
We first prove the following result when the initial vector $a$ is in the first quadrant.
\begin{proposition}\label{prop:A2A3:Q1}
For any $a \in \mathcal{Q}_1$, $N_k(\Sigma_3, a) = O(k)$.
\end{proposition}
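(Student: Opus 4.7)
My plan is to bound $N_k(\Sigma_3, a)$ by partitioning the extreme points of $P_k(\Sigma_3,a)$ by supporting direction as in Definition~\ref{def:Ei}, $E_k(\Sigma_3,a)=\bigcup_{i=0}^{4}E^i_k(\Sigma_3,a)$, and controlling each piece separately. The essential structural fact for $a\in \mathcal{Q}_1$ is that both $A_2$ and $A_3$ are nonnegative shears that preserve $\mathcal{Q}_1$, so the whole orbit lies in $\mathcal{Q}_1$ and one can compare how $A_2$ and $A_3$ act on different support directions.

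For $E^3_k$ I would use an \emph{edge argument}. Write any word $w\in\{A_2,A_3\}^k$ as $w=\begin{pmatrix}p&q\\r&s\end{pmatrix}$ with $ps-qr=1$. A straightforward induction on the length of $w$ shows $p,s\ge 1$ and $q+r\ge k$, and all three inequalities are tight simultaneously only when $w\in\{A_2^k,A_3^k\}$. The identity
\[
a_1(wa)_1+a_2(wa)_2 \;=\; p\,a_1^2 + (q+r)\,a_1a_2 + s\,a_2^2
\]
then gives $(a_1,a_2)\cdot x \ge a_1^2+k\,a_1a_2+a_2^2$ for every $x\in X_k(\Sigma_3,a)$, with equality only at $A_2^ka$ and $A_3^ka$. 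The segment joining these two points is therefore a supporting edge of $P_k(\Sigma_3,a)$ on its lower-left boundary, so $|E^3_k(\Sigma_3,a)|\le 2$.

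For $E^2_k$ (and by symmetry $E^4_k$) I rely on a \emph{pointwise domination}: for any $c\in\inte{\mathcal{Q}_2}$ and any $x\in\mathcal{Q}_1$,
\[
c\cdot A_3x - c\cdot A_2x \;=\; c_2x_1 - c_1x_2 \;\ge\; 0,
\]
so $\max_{x\in P_{k+1}} c\cdot x = \max_{x\in A_3P_k} c\cdot x$ and every maximizer lies in $A_3P_k$. Tracking the pullback direction $cA_3=(c_1+c_2,c_2)$, which sits in $\inte{\mathcal{Q}_1}$ when $c_1+c_2>0$ and in $\inte{\mathcal{Q}_2}$ when $c_1+c_2<0$, yields $E^2_{k+1}(\Sigma_3,a)\subseteq A_3\bigl(E^1_k(\Sigma_3,a)\cup E^2_k(\Sigma_3,a)\bigr)$ up to an $E^0_k$ boundary term. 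An analogous $A_2$-domination on $\inte{\mathcal{Q}_4}$ gives $E^4_{k+1}\subseteq A_2\bigl(E^1_k\cup E^4_k\bigr)$.

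Finally, to close the recursion I must bound $|E^1_k|$; here the clean pointwise domination fails because $c\cdot A_2x-c\cdot A_3x=c_1x_2-c_2x_1$ changes sign on $\inte{\mathcal{Q}_1}$. I would adapt the peak argument of Lemma~\ref{lemma:A1A2:Q1:Q1}: pick the extreme point $p\in E^1_k$ with the largest ratio $x_2/x_1$, and show that on one side of $p$ in the angular ordering the $A_2$-images are dominated by $A_3p$, while on the other side the $A_3$-images are dominated by $A_2p$, giving $|E^1_{k+1}|\le|E^1_k|+O(1)$. Combining this with $|E^3_k|\le 2$, the recursions $|E^2_{k+1}|\le|E^2_k|+O(1)$ and $|E^4_{k+1}|\le|E^4_k|+O(1)$, and the boundary bound $|E^0_k|\le\sum_{i\ne 0}|E^i_k|$, yields $N_k(\Sigma_3,a)=O(k)$. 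The main obstacle is precisely this last step: without the symmetry provided by the involution $A_1$ that drove the $\Sigma_1$ analysis, one has to track the Farey/Stern--Brocot-like ordering of extreme points along the upper-right boundary of $P_k$ carefully, and the non-abelian interplay of $A_2$ and $A_3$ on that ordering makes the case analysis delicate.
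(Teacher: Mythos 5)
Your overall framework is the same as the paper's: decompose $E_k(\Sigma_3,a)$ by the quadrant of the exposing direction as in Definition~\ref{def:Ei} and derive the recursions $|E^4_k|\le |E^4_{k-1}|+|E^1_{k-1}|+2$ and $|E^2_k|\le |E^2_{k-1}|+|E^1_{k-1}|+2$, which is exactly what the paper does. Your algebraic treatment of $E^3_k$ via the functional $(a_1,a_2)\cdot x$ is a nice alternative to the paper's inductive argument, though as written it has a small hole: knowing that the single direction $-(a_1,a_2)\in\inte{\mathcal{Q}_3}$ exposes exactly the edge with endpoints $A_2^ka$ and $A_3^ka$ does not by itself rule out other vertices being exposed by other directions in $\inte{\mathcal{Q}_3}$ (a vertex with smaller $x_1$ could still be exposed by a direction close to $(-1,0)$). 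You also need that $A_3^ka$ and $A_2^ka$ are the unique minimizers of $x_1$ and of $x_2$ over $X_k(\Sigma_3,a)$ — which your entry computation does supply, since $q=0$ forces $w=A_3^k$ and $r=0$ forces $w=A_2^k$ — so that the Pareto-minimal chain reduces to this one edge; and the case $a\in\partial\mathcal{Q}_1$ needs a separate word.

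The genuine gap is your handling of $E^1_k$, which is the crux of this proposition. The paper's proof rests on the bound $|E^1_k(\Sigma_3,a)|\le 4$ for all $k\ge 3$, i.e.\ a bound \emph{independent of $k$}; it is precisely this constant bound that, fed into $|E^2_k|\le |E^2_{k-1}|+|E^1_{k-1}|+2$ and its $E^4$ analogue, yields $|E^2_k|,|E^4_k|=O(k)$ and hence $N_k(\Sigma_3,a)=O(k)$. Your plan is instead to adapt the peak argument of Lemma~\ref{lemma:A1A2:Q1:Q1} to show $|E^1_{k+1}|\le|E^1_k|+O(1)$, which gives only $|E^1_k|=O(k)$; plugged into the inclusion $E^2_{k+1}\subseteq A_3\bigl(E^1_k\cup E^2_k\bigr)$ that you yourself derive, this produces $|E^2_k|=O(k^2)$ and hence only $N_k=O(k^2)$ — the rate of Proposition~\ref{prop:A1A2:Q1} for $\Sigma_1$, not the claimed $O(k)$. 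Your concluding recursions ``$|E^2_{k+1}|\le|E^2_k|+O(1)$'' are therefore unjustified by, and inconsistent with, your own intermediate step unless you first establish $|E^1_k|=O(1)$. What distinguishes $\Sigma_3$ from $\Sigma_1$, and what you must actually prove, is that for directions $c\in\inte{\mathcal{Q}_1}$ only a bounded number of candidate points can ever be maximal, so $|E^1_k|$ stays bounded by an absolute constant; the linear-growth peak argument that suffices for $\Sigma_1$ is too weak to give the stated result here.
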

\begin{proof}
The proof is similar to the proof of Proposition~\ref{prop:A1A2:Q1} for $\Sigma_1$.
We first bound the cardinality of $E_k^i(\Sigma_3, a)$ for each $i$.
\sloppy Similar to the proofs of Lemmas~\ref{lemma:A1A2:Q1:Q1},~\ref{lemma:A1A2:Q1:Q3}, and~\ref{lemma:A1A2:Q1:Q24}, we can show that for any $a \in \mathcal{Q}_1$, $|E^1_k(\Sigma_3,a)| \le 4$ when $k \ge 3$, $E^3_k(\Sigma_3,a)  \subseteq \{A_2^k a, A_3^k a\}$ when $k\ge 1$, 
$|E^4_k(\Sigma_3, a)|  \leq |E^4_{k-1}(\Sigma_3, a)| + |E^1_{k-1}(\Sigma_3, a)| + 2$ and 
$|E^2_k(\Sigma_3, a)|  \leq |E^2_{k-1}(\Sigma_3, a)| + |E^1_{k-1}(\Sigma_3, a)| + 2$ when $k \ge 1$, respectively.
Then $|E^4_k(\Sigma_3,a)|  \leq  |E^4_{k-1}(\Sigma_3,a)| + 6 \leq |E^4_{3}(\Sigma_3,a)| + 6(k-3) \leq 6k-10$. 
Similarly, $|E^2_k(\Sigma_3,a)| \leq 6k -10 $.
Finally, for any $a \in \mathcal{Q}_1$ and integer $k \geq 3$, $N_k(\Sigma_3,a)  \leq \sum_{i=0}^4|E^i_k(\Sigma_3,a)| \leq 8 + 4 + (6k-10) + 2 + (6k - 10) + 8= 12k - 6$.
\end{proof}

\begin{proposition} \label{prop:A2A3}
The pair $\Sigma_3$ has the \polyv{} property and $N_k(\Sigma_3) = O(k^2)$.													
\end{proposition}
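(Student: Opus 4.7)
The plan is to mimic the structure of Proposition~\ref{prop:A1A2:Q2}: use Proposition~\ref{prop:A2A3:Q1} to handle the first and third quadrants, then show that the trajectory through the remaining quadrants has a very restricted structure. First, observe that for any $T \in \Sigma_3$ and any $a$, $T(-a) = -Ta$, so $P_k(\Sigma_3, -a) = -P_k(\Sigma_3, a)$ and therefore $N_k(\Sigma_3, a) = N_k(\Sigma_3, -a)$. Combined with Proposition~\ref{prop:A2A3:Q1}, this gives $N_k(\Sigma_3, a) = O(k)$ for every $a \in \mathcal{Q}_1 \cup \mathcal{Q}_3$ (including all points on the coordinate axes). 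By the same symmetry, it suffices to bound $N_k(\Sigma_3, a)$ for $a \in \inte{\mathcal{Q}_2}$.

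Write $a = (-u, v)^\top$ with $u, v > 0$. Direct computation gives $A_2 a = (v - u, v)^\top$ and $A_3 a = (-u, v - u)^\top$, and a short case analysis on the sign of $v - u$ shows that at most one of $A_2, A_3$ sends $a$ back into $\inte{\mathcal{Q}_2}$: $A_2$ does so iff $u > v$, $A_3$ does so iff $v > u$, and when $u = v$ both images lie on the coordinate axes. Consequently, the portion of the trajectory of $X_k(\Sigma_3, a)$ contained in $\inte{\mathcal{Q}_2}$ is a single deterministic chain $a = a^{(0)}, a^{(1)}, a^{(2)}, \ldots$, terminating at the first step where the two coordinates have equal magnitude. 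For each index $j$ along the chain, let $b^{(j)}$ be the image of $a^{(j)}$ under the \emph{other} matrix; by construction $b^{(j)} \in \mathcal{Q}_1 \cup \mathcal{Q}_3$ or $b^{(j)}$ lies on an axis.

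Since $A_2, A_3$ are non-negative matrices, both $\mathcal{Q}_1$ and $\mathcal{Q}_3$ are invariant under $\Sigma_3$, so once the trajectory leaves $\inte{\mathcal{Q}_2}$ it never returns. Every element of $X_k(\Sigma_3, a)$ is therefore either the survivor $a^{(k)}$ of the deterministic chain, or of the form $T_{k-1} \cdots T_{j+1} b^{(j)}$ for some $j$, obtained by branching out of $\inte{\mathcal{Q}_2}$ at step $j+1$ and then applying $k-1-j$ further operations. This yields the decomposition
\[
X_k(\Sigma_3, a) \subseteq \{a^{(k)}\} \cup \bigcup_{j=0}^{k-1} X_{k-1-j}(\Sigma_3, b^{(j)}).
\]
Any extreme point of $\conv{X_k(\Sigma_3, a)}$ that lies in one of the sets on the right must be extreme in that set, so
\[
N_k(\Sigma_3, a) \le 1 + \sum_{j=0}^{k-1} N_{k-1-j}(\Sigma_3, b^{(j)}).
\]
Each $b^{(j)}$ lies in $\mathcal{Q}_1 \cup \mathcal{Q}_3$, so the first step of the argument applies and gives $N_{k-1-j}(\Sigma_3, b^{(j)}) = O(k-1-j)$ with a constant independent of $b^{(j)}$ (Proposition~\ref{prop:A2A3:Q1} produces a uniform linear bound $12k-6$). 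Summing yields $N_k(\Sigma_3, a) \le 1 + \sum_{j=0}^{k-1} O(k-1-j) = O(k^2)$, and taking the supremum over $a$ gives $N_k(\Sigma_3) = O(k^2)$.

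The main obstacle is establishing the deterministic-chain structure inside $\inte{\mathcal{Q}_2}$, which reduces to the short sign-case analysis above; unlike the more intricate counting argument in Lemma~\ref{lemma:A1A2:Q4}, here the chain is literally unique rather than merely polynomially bounded, which is what drops the final exponent from $4$ to $2$. A minor bookkeeping subtlety appears at the termination step (when $u = v$, both images of $a^{(j^*)}$ fall on the axes, so there are temporarily two branch points), but this only contributes one extra $O(k)$ term to the sum and preserves the $O(k^2)$ bound.
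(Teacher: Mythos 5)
Your proof is correct and is essentially the paper's argument in unrolled form: the paper also observes that for an initial vector in the open second/fourth quadrant exactly one of $A_2a, A_3a$ stays in that open quadrant while the other lands in $\mathcal{Q}_1\cup\mathcal{Q}_3$ (the cases $a_1\lessgtr -a_2$, handled in $\inte{\mathcal{Q}_4}$ with the same symmetry you use for $\inte{\mathcal{Q}_2}$), and then applies Proposition~\ref{prop:A2A3:Q1} to the exiting branch via the recursion $f_k \le \alpha(k-1)+f_{k-1}$, whose unrolling is exactly your sum over branch points $b^{(j)}$ yielding $O(k^2)$. Your explicit deterministic-chain decomposition of $X_k$ is just a more concrete presentation of that same recursion, so no substantive difference.
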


\begin{proof}
%
We only need to prove that $N_k(\Sigma_3, a) = O(k^2)$ for any $a \in \inte{\mathcal{Q}_4}$.
Define $f_k =\sup\{N_k(\Sigma_3, a) \mid a \in \inte{\mathcal{Q}_4}\}$ for any integer $k \ge 1$.
Note that $f_k=\sup\{N_k(\Sigma_3, a) \mid a \in \inte{\mathcal{Q}_2}\}$ for $k\ge 1$ as well.
%
%
%
\sloppy Since $P_k(\Sigma_3, a)=\conv{P_{k-1}(\Sigma_3, A_2a) \cup P_{k-1}(\Sigma_3, A_3a)}$, we have $N_k(\Sigma_3, a) \le N_{k-1}(\Sigma_3, A_2a) + N_{k-1}(\Sigma_3, A_3a)$.
Consider a vector $a = (a_1, a_2)^{\top} \in \inte{\mathcal{Q}_4}$ with $a_1 > 0$ and $a_2 <0$.
\begin{enumerate}
  \item If $a_1 = -a_2$, we have $A_2a=(0, a_2)^{\top} \in \mathcal{Q}_3$ and $A_3a=(a_1,0)^{\top} \in \mathcal{Q}_1$.
Then there exists $\alpha >0$ and integer $k_0$ such that for any integer $l\ge k_0$, $N_l(\Sigma_3, A_2a) \le \alpha l$ and $N_{l}(\Sigma_3, A_3a) \le \alpha l$.
Thus for any integer $k \ge k_0+1$, $N_k(\Sigma_3, a) \le N_{k-1}(\Sigma_3, A_2a) + N_{k-1}(\Sigma_3, A_3a) \le \alpha(k-1) + \alpha (k-1) \le 2\alpha k$.
Therefore, $N_k(\Sigma_3, a)=O(k)$.
\item If $a_1 < -a_2$, we have $A_2a=(a_1+a_2, a_2)^{\top} \in \mathcal{Q}_3$ and $A_3a=(a_1, a_1+a_2)^{\top} \in \inte{\mathcal{Q}_4}$.
Then there exists $\alpha >0$ and integer $k_0$ such that for any integer $l\ge k_0$, $N_l(\Sigma_3, A_2a) \le \alpha l$.
For any $k \ge k_0+1$, $N_k(\Sigma_3, a) \le N_{k-1}(\Sigma_3, A_2a) + N_{k-1}(\Sigma_3, A_3a) \le \alpha(k-1) + f_{k-1}$.
Then for any $k \ge k_0+1$, $f_k \le \alpha(k-1) + f_{k-1}$.
Thus for any $k \ge 2k_0$,
\begin{align*}
f_k & \le \alpha(k-1) + f_{k-1} \le \alpha(k-1) + \alpha(k-2) + f_{k-2}\\
& \qquad \cdots \le \alpha(k-1) + \alpha(k-2) + \cdots + \alpha k_0 + f_{k_0}\\
& \le \alpha \frac{(k-1+k_0)(k-k_0)}{2} + 2^{k_0} \le \beta k^2,
\end{align*}
for some $\beta > 0$.
Therefore, $f_k=O(k^2)$.

  \item If $a_1 > -a_2$, it can be proved that $f_k=O(k^2)$ with a similar argument as in the case $a_1 < -a_2$.
\end{enumerate}
%
%
\end{proof}

\subsection{$\Sigma_4=\{A_4, A_5\}$}
%

\begin{proposition} \label{prop:A4A5}
The pair $\Sigma_4$ has the \polyv{} property and $N_k(\Sigma_4) = O(k^2)$.											
\end{proposition}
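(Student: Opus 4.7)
The plan is to mirror the strategy used for Proposition 8 (on $\Sigma_3$): first establish a linear bound $N_k(\Sigma_4,a)=O(k)$ whenever $a\in\mathcal{Q}_1\cup\mathcal{Q}_3$, and then bootstrap to the quadratic bound for $a\in\inte{\mathcal{Q}_2}\cup\inte{\mathcal{Q}_4}$ via the identity $P_k(\Sigma_4,a)=\conv{P_{k-1}(\Sigma_4,A_4a)\cup P_{k-1}(\Sigma_4,A_5a)}$, which yields $N_k(\Sigma_4,a)\le N_{k-1}(\Sigma_4,A_4a)+N_{k-1}(\Sigma_4,A_5a)$.

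For the linear bound in Step 1, note that by the symmetry $N_k(\Sigma_4,a)=N_k(\Sigma_4,-a)$ it is enough to treat $a\in\mathcal{Q}_1$. The decisive geometric fact is that both $A_4$ and $A_5$ map $\mathcal{Q}_1$ into itself, and moreover for every $c\in\inte{\mathcal{Q}_1}$ one has $cA_4=(c_1+c_2,c_1)^{\top}\in\inte{\mathcal{Q}_1}$ and $cA_5=(c_2,c_1+c_2)^{\top}\in\inte{\mathcal{Q}_1}$. This invariance lets me bound $|E^1_k(\Sigma_4,a)|$ (and analogously $|E^3_k(\Sigma_4,a)|$) by a small constant, exactly as in Lemmas 3--4 for $\Sigma_1$ and their adaptations for $\Sigma_3$. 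For $c\in\inte{\mathcal{Q}_2}\cup\inte{\mathcal{Q}_4}$, a direct computation shows that $cA_4$ and $cA_5$ either stay in the same $\mathcal{Q}_2$/$\mathcal{Q}_4$ region or pass into $\mathcal{Q}_1\cup\mathcal{Q}_3$; in the latter case the maximizer is already controlled by $E^1$ or $E^3$, and in the former case we obtain a recurrence of the form $|E^i_k|\le|E^i_{k-1}|+|E^{i\pm 1}_{k-1}|+O(1)$, which yields linear growth, in direct analogy with Lemma 5.

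For Step 2, I perform a case analysis on the sign of $a_1+a_2$. Starting from $a\in\inte{\mathcal{Q}_4}$ (so $a_1>0>a_2$), the image $A_4a=(a_1+a_2,a_1)^{\top}$ has positive second coordinate and $A_5a=(a_2,a_1+a_2)^{\top}$ has negative first coordinate, so: if $a_1+a_2>0$ then $A_4a\in\mathcal{Q}_1$ and $A_5a\in\inte{\mathcal{Q}_2}$; if $a_1+a_2<0$ then $A_4a\in\inte{\mathcal{Q}_2}$ and $A_5a\in\mathcal{Q}_3$; if $a_1+a_2=0$ both land on coordinate axes inside $\mathcal{Q}_1\cup\mathcal{Q}_3$. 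An analogous enumeration for $a\in\inte{\mathcal{Q}_2}$ shows that one image always lies in $\mathcal{Q}_1\cup\mathcal{Q}_3$ while the other stays in $\inte{\mathcal{Q}_2}\cup\inte{\mathcal{Q}_4}$. Defining $f_k=\sup\{N_k(\Sigma_4,a):a\in\inte{\mathcal{Q}_2}\cup\inte{\mathcal{Q}_4}\}$ and plugging the Step 1 bound into the recursion gives $f_k\le f_{k-1}+\alpha(k-1)$ for some constant $\alpha>0$ and all sufficiently large $k$; telescoping yields $f_k=O(k^2)$, completing the proof.

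The main obstacle will be Step 1, specifically the detailed bookkeeping for $|E^2_k|$ and $|E^4_k|$: unlike for $\Sigma_1$, where the dual-vector orbit under $A_1,A_2$ can be traced cleanly, here the dual vectors $cA_4,cA_5$ straddle quadrant boundaries whenever $c_1+c_2$ changes sign, and one has to verify that the maximizers which leave $\inte{\mathcal{Q}_2}$/$\inte{\mathcal{Q}_4}$ are absorbed by the already-controlled sets $E^0_k\cup E^1_k\cup E^3_k$. Once this quadrant-crossing analysis is carried out, Step 2 is an essentially routine telescoping argument identical in form to the one in Proposition 8.
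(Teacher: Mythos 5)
There is a genuine gap in your Step 1, and it sits exactly where you located the ``main obstacle.'' For $\Sigma_4$ the dual directions do \emph{not} behave as your claimed recurrence $|E^i_k|\le|E^i_{k-1}|+|E^{i\pm1}_{k-1}|+O(1)$ requires: if $c=(c_1,c_2)\in\inte{\mathcal{Q}_4}$ then $cA_4=(c_1+c_2,\,c_1)$ lies in $\inte{\mathcal{Q}_1}$ or $\inte{\mathcal{Q}_2}$ and $cA_5=(c_2,\,c_1+c_2)$ lies in $\inte{\mathcal{Q}_2}$ or $\inte{\mathcal{Q}_3}$ (depending on the sign of $c_1+c_2$), so neither pulled-back direction ever stays in $\inte{\mathcal{Q}_4}$; symmetrically, for $c\in\inte{\mathcal{Q}_2}$ the directions $cA_4,cA_5$ land in $\mathcal{Q}_1\cup\mathcal{Q}_3$ or in $\inte{\mathcal{Q}_4}$, never back in $\inte{\mathcal{Q}_2}$. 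Hence the maximizers that ``leave'' $\inte{\mathcal{Q}_4}$ are not absorbed by the already-controlled sets $E^0\cup E^1\cup E^3$ as you assert; they are absorbed by the other uncontrolled set, giving only the coupled bounds $|E^4_k|\le 2|E^2_{k-1}|+O(1)$ and $|E^2_k|\le 2|E^4_{k-1}|+O(1)$, which telescope to exponential, not linear, growth. This is precisely the structural difference from $\Sigma_1$, $\Sigma_3$ and $\Sigma_5$, where one matrix is triangular and preserves the relevant half-planes; so the ``direct analogy with Lemma~\ref{lemma:A1A2:Q1:Q24}'' does not go through, and a genuinely new pruning argument would be needed to certify the linear bound for $a\in\mathcal{Q}_1\cup\mathcal{Q}_3$ (that bound is true, but your sketch does not establish it). Your Step 2 case analysis and the telescoping to $O(k^2)$ are fine, but they rest entirely on Step 1.

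The paper avoids this difficulty altogether by a two-step reduction: $A_4A_5=A_2A_2$, $A_4A_4=A_2A_3$, $A_5A_5=A_3A_2$, $A_5A_4=A_3A_3$, so every even-length product over $\Sigma_4$ is an even-length product over $\Sigma_3$ and conversely. Thus $P_k(\Sigma_4,a)=P_k(\Sigma_3,a)$ for even $k$, and for odd $k$ one extra step gives $N_k(\Sigma_4,a)\le 2N_{k-1}(\Sigma_3,a)$, so the $O(k^2)$ bound follows immediately from Proposition~\ref{prop:A2A3} with no new extreme-point bookkeeping. If you try to repair your Step 1 by analyzing two steps at a time (e.g., computing $cA_4A_4=cA_2A_3$ and so on), you will in effect rediscover this identity, so the cleanest fix is to adopt the reduction to $\Sigma_3$ directly.
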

\begin{proof}
First observe that $A_4A_5 = A_2A_2$, $A_4A_4 = A_2A_3$, $A_5A_5 = A_3A_2$, and $A_5A_4 = A_3A_3$.
When $k$ is an even integer, every product of $k$ matrices with $A_2$ and $A_3$ can be represented by a product of $k$ matrices with $A_4$ and $A_5$ and vice versa.
Therefore, for any given $a\in \Re^2$, $P_k(\Sigma_4, a) = P_k(\Sigma_3, a)$ and $N_k(\Sigma_4, a) = N_k(\Sigma_3, a)$.
When $k$ is an odd integer, $P_k(\Sigma_4, a) = \conv{A_4P_{k-1}(\Sigma_4,a) \cup A_5 P_{k-1}(\Sigma_4, a)}$ and $N_k(\Sigma_4, a) \le 2N_{k-1}(\Sigma_4, a) = 2N_{k-1}(\Sigma_3, a)$.
Since there exists $\alpha > 0$ and integer $k_0$ such that $N_k(\Sigma_3, a) \le \alpha k^2$ for any integer $k \ge k_0$, we have $N_k(\Sigma_4,a) \le 2 \alpha k^2$ for any integer $k \ge k_0$.
Therefore, $N_k(\Sigma_4)=O(k^2)$.
\end{proof}

\subsection{$\Sigma_5=\{A_2, A_4\}$}
\begin{proposition}\label{prop:A2A4:Q1:a1>=a2}
For any $a \in \mathcal{Q}_1$ with $a_1 \ge a_2$, $N_k(\Sigma_5, a) = O(k)$.
\end{proposition}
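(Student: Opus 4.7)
The plan is to adapt the quadrant-decomposition approach of Propositions~\ref{prop:A1A2:Q1} and~\ref{prop:A2A3:Q1}, leveraging a uniform monotonicity property that is special to the pair $\{A_2, A_4\}$ restricted to the region $R = \{x \in \Re^2 : x_1 \ge x_2 \ge 0\}$.

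First I would verify that both $A_2 (x_1, x_2)^\top = (x_1+x_2, x_2)^\top$ and $A_4 (x_1, x_2)^\top = (x_1+x_2, x_1)^\top$ preserve $R$, so $P_k(\Sigma_5, a) \subseteq R$ for every $k \ge 0$ whenever $a \in R$. The key observation is then the identity
\[c \cdot (A_4 p) - c \cdot (A_2 p) = c_2 (p_1 - p_2)\]
for any cost vector $c = (c_1, c_2)^\top$ and any $p \in R$. Since $p_1 \ge p_2$ on $R$, the sign of this difference depends only on the sign of $c_2$: when $c_2 > 0$ the point $A_4 p$ dominates $A_2 p$ in direction $c$, and when $c_2 < 0$ the reverse holds.

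Second, I would decompose $E_k(\Sigma_5, a) = \cup_{i=0}^4 E^i_k(\Sigma_5, a)$ as in Definition~\ref{def:Ei} and combine the monotonicity above with the transformations $cA_2 = (c_1, c_1+c_2)$ and $cA_4 = (c_1+c_2, c_1)$ to derive the inclusions
\begin{align*}
E^1_k &\subseteq A_4 E^1_{k-1}, \qquad E^3_k \subseteq A_2 E^3_{k-1},\\
E^4_k &\subseteq A_2 \bigl( E^1_{k-1} \cup E^4_{k-1} \cup E^0_{k-1} \bigr),\\
E^2_k &\subseteq A_4 \bigl( E^4_{k-1} \cup E^3_{k-1} \cup E^0_{k-1} \bigr),
\end{align*}
where in the last two lines the three alternatives correspond to whether $c_1 + c_2$ is positive, negative, or zero. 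Moreover $|E^0_k|$ is bounded by a constant, since each of the four axis-aligned directions yields at most two extreme-point maximizers on a planar polytope.

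Third, induction on $k$ gives $|E^1_k| \le |E^1_0| = 1$ and $|E^3_k| \le |E^3_0| = 1$. Combined with $|E^0_k| = O(1)$, the recursion $|E^4_k| \le |E^4_{k-1}| + |E^1_{k-1}| + |E^0_{k-1}| = |E^4_{k-1}| + O(1)$ yields $|E^4_k| = O(k)$, and then $|E^2_k| \le |E^4_{k-1}| + |E^3_{k-1}| + |E^0_{k-1}| = O(k)$. Summing all five bounds gives $N_k(\Sigma_5, a) \le \sum_{i=0}^4 |E^i_k(\Sigma_5, a)| = O(k)$.

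The main obstacle, which turns out to be the central insight rather than a technical difficulty, is recognizing the uniform monotonicity identity in the first step: it is what distinguishes $\Sigma_5$ on $R$ from the harder cases $\Sigma_1$ and $\Sigma_3$ on $\mathcal{Q}_1$, where no single matrix uniformly dominates the other across a half-plane of cost directions. Once this identity is in hand, the quadrant recursions follow directly from the definitions, and the only routine task left is to handle the degenerate subcases (the four axis-aligned directions defining $E^0_k$, and the boundary subcase $c_1+c_2 = 0$) carefully when bookkeeping the $E^i_k$ groups.
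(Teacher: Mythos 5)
Your plan is correct and follows essentially the same route as the paper: the same decomposition $E_k=\cup_{i=0}^4 E^i_k$ from Definition~\ref{def:Ei}, with the dominance identity $c(A_4p-A_2p)=c_2(p_1-p_2)$ on the invariant cone $\{x_1\ge x_2\ge 0\}$ making $E^1_k$ and $E^3_k$ singletons and yielding constant-increment recursions for $E^2_k$ and $E^4_k$ --- exactly the facts the paper asserts tersely by analogy with Lemmas~\ref{lemma:A1A2:Q1:Q1}--\ref{lemma:A1A2:Q1:Q24}. Your bookkeeping for $E^2_k$ (via $E^4_{k-1}\cup E^3_{k-1}\cup E^0_{k-1}$) differs slightly from the paper's stated recursion $|E^2_k|\le |E^2_{k-1}|+|E^1_{k-1}|+2$, but it is a valid consequence of the same computation of $cA_4$ and likewise gives the $O(k)$ bound.
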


%
%
\begin{proof}
First similar to the proofs of Lemmas~\ref{lemma:A1A2:Q1:Q1},~\ref{lemma:A1A2:Q1:Q3}, and~\ref{lemma:A1A2:Q1:Q24}, we can show by induction that for any $a \in \mathcal{Q}_1$ with $a_1 \ge a_2$, $E^1_{k}(\Sigma_5, k) = \{A_4^k a\}$ and $E^3_{k}(\Sigma_5, a) = \{A_2^k a\}$ when $k \ge 0$, and $|E^4_k(\Sigma_5, a)|  \leq |E^4_{k-1}(\Sigma_5, a)| + |E^1_{k-1}(\Sigma_5, a)| + 2$ and $|E^2_k(\Sigma_5, a)|  \leq |E^2_{k-1}(\Sigma_5, a)| + |E^1_{k-1}(\Sigma_5, a)| + 2$ when $k \ge 1$, respectively.
Then $|E^4_k(\Sigma_5, a)| \leq |E^4_{k-1}(\Sigma_5, a)| + 3
             \leq |E^4_{1}(\Sigma_5, a)| + 3(k-1) \le 3k-1$.
Similarly, $|E^2_k(\Sigma_5, a)| \leq 3k-1$.
Finally, for any integer $k \geq 1$, $N_k(\Sigma_5, a)  \leq \sum_{i=0}^4|E^i_k(\Sigma_5, a)| \le 6k + 8$.
\end{proof}

\noindent We now extend Proposition~\ref{prop:A2A4:Q1:a1>=a2} to the case where $a$ is in the first quadrant.
\begin{proposition}\label{prop:A2A4:Q1}
For any $a \in \mathcal{Q}_1$, $N_k(\Sigma_5, a) = O(k)$.
\end{proposition}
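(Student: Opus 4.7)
The plan is to reduce the general case $a\in\mathcal{Q}_1$ to the already-handled subcase $a_1\geq a_2$ from Proposition~\ref{prop:A2A4:Q1:a1>=a2} by doing one step of the recursion. Specifically, if $a\in\mathcal{Q}_1$ with $a_1<a_2$, I would apply the identity
\[
P_k(\Sigma_5,a)=\conv{P_{k-1}(\Sigma_5,A_2a)\cup P_{k-1}(\Sigma_5,A_4a)},
\]
which follows from $X_k(\Sigma_5,a)=X_{k-1}(\Sigma_5,A_2a)\cup X_{k-1}(\Sigma_5,A_4a)$ and the fact that $\conv{U\cup V}=\conv{\conv U\cup\conv V}$.

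The key observation is that a single matrix-vector product is enough to push $a$ into the favorable region. For $a=(a_1,a_2)^\top\in\mathcal{Q}_1$ with $0\le a_1<a_2$, compute
\[
A_2a=(a_1+a_2,\;a_2)^\top,\qquad A_4a=(a_1+a_2,\;a_1)^\top.
\]
Both vectors lie in $\mathcal{Q}_1$, and their first coordinates satisfy $a_1+a_2\ge a_2$ and $a_1+a_2\ge a_1$ respectively, so both $A_2a$ and $A_4a$ fall into the regime covered by Proposition~\ref{prop:A2A4:Q1:a1>=a2}. Thus there exist constants $\alpha>0$ and $k_0$ such that $N_{k-1}(\Sigma_5,A_2a)\le\alpha(k-1)$ and $N_{k-1}(\Sigma_5,A_4a)\le\alpha(k-1)$ for all $k\ge k_0+1$.

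Combining, I get
\[
N_k(\Sigma_5,a)\le N_{k-1}(\Sigma_5,A_2a)+N_{k-1}(\Sigma_5,A_4a)\le 2\alpha(k-1)=O(k),
\]
which, together with Proposition~\ref{prop:A2A4:Q1:a1>=a2} for the case $a_1\ge a_2$, completes the proof. There is essentially no obstacle here: the only thing to verify is that one application of either generator forces $a_1+a_2$ into the first coordinate and a smaller value into the second, which is immediate from the definitions of $A_2$ and $A_4$.
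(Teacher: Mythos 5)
Your proposal is correct and follows essentially the same route as the paper: one application of $A_2$ or $A_4$ sends any $a\in\mathcal{Q}_1$ into the region $\{x\in\Re^2_+ \mid x_1\ge x_2\}$, after which Proposition~\ref{prop:A2A4:Q1:a1>=a2} and the recursion $N_k(\Sigma_5,a)\le N_{k-1}(\Sigma_5,A_2a)+N_{k-1}(\Sigma_5,A_4a)$ give the linear bound. The only cosmetic difference is your explicit case split on $a_1<a_2$, which the paper avoids by noting the one-step reduction works for every $a\in\mathcal{Q}_1$.
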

\begin{proof}
%
For any $a = (a_1, a_2)^{\top}$ with $a_1 \ge 0$ and $a_2 \ge 0$, both $A_2a$ and $A_4a$ are contained in the set $\{x \in \Re^2_+ \mid x_1 \ge x_2\}$.
By Proposition~\ref{prop:A2A4:Q1:a1>=a2}, there exists $\alpha >0$ and integer $k_0$ such that for any integer $l \ge k_0$, $N_l(\Sigma_5, A_2a) \le \alpha l$ and $N_l(\Sigma_5, A_4a) \le \alpha l$. 
Thus for any integer $k \ge k_0+1$,
$N_k(\Sigma_5, a) \leq N_{k-1}(\Sigma_5, A_2a) + N_{k-1}(\Sigma_5, A_4a) \le \alpha (k-1) + \alpha(k-1) \le 2\alpha k$.
Therefore, $N_k(\Sigma_5, a) = O(k)$.
\end{proof}

\noindent Finally, we extend the result to $a \in \Re^2$, similar to Proposition~\ref{prop:A2A3} for the case $\Sigma_3$.
\begin{proposition} \label{prop:A2A4}
The pair $\Sigma_5$ has the \polyv{} property and $N_k(\Sigma_5)=O(k^2)$.
\end{proposition}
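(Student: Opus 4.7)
The plan is to mirror the strategy used for Proposition~\ref{prop:A2A3}: bootstrap from the known linear bound on $\mathcal{Q}_1 \cup \mathcal{Q}_3$ (Proposition~\ref{prop:A2A4:Q1}) to a quadratic bound on the whole plane, by doing a case analysis on the sign of $a_1 + a_2$ when $a$ lies in the interior of $\mathcal{Q}_2$ or $\mathcal{Q}_4$.

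First I would use the identity $N_k(\Sigma_5, a) = N_k(\Sigma_5, -a)$ (which follows from linearity, since negating $a$ just negates every vertex of $P_k(\Sigma_5,a)$) together with Proposition~\ref{prop:A2A4:Q1} to reduce to initial vectors $a \in \inte{\mathcal{Q}_2} \cup \inte{\mathcal{Q}_4}$. I would then define
\[f_k := \sup\{N_k(\Sigma_5, a) \mid a \in \inte{\mathcal{Q}_2} \cup \inte{\mathcal{Q}_4}\}\]
and aim to show $f_k = O(k^2)$ via the recursion $N_k(\Sigma_5, a) \le N_{k-1}(\Sigma_5, A_2 a) + N_{k-1}(\Sigma_5, A_4 a)$, which follows from $P_k(\Sigma_5,a) = \conv{P_{k-1}(\Sigma_5, A_2 a) \cup P_{k-1}(\Sigma_5, A_4 a)}$.

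Next I would carry out the case analysis. For $a = (a_1, a_2)^\top \in \inte{\mathcal{Q}_4}$ (so $a_1 > 0$, $a_2 < 0$), note that $A_2 a = (a_1+a_2, a_2)^\top$ and $A_4 a = (a_1+a_2, a_1)^\top$. The key observation is that in every subcase, one of the two images lies in $\mathcal{Q}_1 \cup \mathcal{Q}_3$ while the other lies in $\inte{\mathcal{Q}_2} \cup \inte{\mathcal{Q}_4}$ (or on its boundary, which can be absorbed into $\mathcal{Q}_1 \cup \mathcal{Q}_3$):
\begin{itemize}
\item If $a_1 + a_2 > 0$, then $A_4 a \in \inte{\mathcal{Q}_1}$ while $A_2 a \in \inte{\mathcal{Q}_4}$.
\item If $a_1 + a_2 < 0$, then $A_2 a \in \inte{\mathcal{Q}_3}$ while $A_4 a \in \inte{\mathcal{Q}_2}$.
\item If $a_1 + a_2 = 0$, both images lie on the coordinate axes in $\mathcal{Q}_1 \cup \mathcal{Q}_3$.
\end{itemize}
A symmetric analysis for $a \in \inte{\mathcal{Q}_2}$ (with $a_1 < 0$, $a_2 > 0$) handles the remaining case: subcase $a_1+a_2 > 0$ sends $A_2 a$ to $\mathcal{Q}_1$ and $A_4 a$ to $\inte{\mathcal{Q}_4}$; subcase $a_1+a_2 < 0$ sends $A_4 a$ to $\mathcal{Q}_3$ and $A_2 a$ to $\inte{\mathcal{Q}_2}$; and the equality case sends both to the axes.

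Combining with Proposition~\ref{prop:A2A4:Q1}, which gives a constant $\alpha > 0$ and integer $k_0$ such that $N_l(\Sigma_5, b) \le \alpha l$ for every $b \in \mathcal{Q}_1 \cup \mathcal{Q}_3$ and every $l \ge k_0$, the recursion yields $f_k \le \alpha(k-1) + f_{k-1}$ for all $k \ge k_0 + 1$. Telescoping from $k_0$ then gives $f_k \le f_{k_0} + \alpha \sum_{j=k_0}^{k-1} j \le \beta k^2$ for some $\beta > 0$, which proves $N_k(\Sigma_5) = O(k^2)$.

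The main obstacle is arithmetic bookkeeping rather than conceptual difficulty: one has to verify carefully that the quadrant assignment works out in every subcase (in particular, that the map $\inte{\mathcal{Q}_2} \leftrightarrow \inte{\mathcal{Q}_4}$ arising under $A_4$ and $A_2$ does not prevent the recursion, because only one of the two branches stays in $\inte{\mathcal{Q}_2} \cup \inte{\mathcal{Q}_4}$ at each step). Once that invariant is established, the recursion is routine and identical in spirit to the one used for $\Sigma_3$.
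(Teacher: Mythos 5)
Your proposal is correct and follows essentially the same route as the paper, which omits the proof and simply states that the extension to all of $\Re^2$ is "similar to Proposition~\ref{prop:A2A3}": you reduce to $a\in\inte{\mathcal{Q}_2}\cup\inte{\mathcal{Q}_4}$ via $N_k(\Sigma_5,a)=N_k(\Sigma_5,-a)$ and Proposition~\ref{prop:A2A4:Q1}, then use the recursion $N_k(\Sigma_5,a)\le N_{k-1}(\Sigma_5,A_2a)+N_{k-1}(\Sigma_5,A_4a)$ with a case analysis on the sign of $a_1+a_2$ showing one branch always lands in $\mathcal{Q}_1\cup\mathcal{Q}_3$, exactly the analogue of the $\Sigma_3$ argument. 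Your quadrant bookkeeping checks out, so the telescoping bound $f_k=O(k^2)$ goes through as claimed.
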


\section{Computational results} \label{sec:computation}
In this section, we compare the performance of our algorithm with one state-of-the-art global optimization solver Baron~\cite{kilincc2018exploiting}.
We randomly generate 10 instances for each of the 10 sets of parameters $(n,m,K)$ for \prob{}, with 100 instances in total. 
The parameters are summarized in Table~\ref{table:finalresult}.
The entries of each matrix are randomly drawn from a uniform distribution over $[-1,1]$, and the entries of the initial vector $a$ are randomly drawn from a uniform distribution over $[0,1]$.
Note that our algorithm does not rely on any additional property of $f$ other than convexity.
In order for Baron to gain a better performance, we choose a simple smooth objective function $f(x)=\|x\|_2^2$.
All test instances can be downloaded at \url{https://github.com/qqqhe}.
The mixed-integer nonlinear programming (MINLP) formulation of \prob{} is given in~\eqref{eq:minlp} and solved by Baron, where $A_{lij}$ denotes the $(i,j)$-th entry of the $l$-th matrix for $l \in [m]$.
Note that we also tried to linearize the constraints in the MINLP formulation by introducing big-M constants, but we observed that Baron and a commercial mixed-integer linear programming solver Gurobi~\cite{gurobi} easily run into numerical issues with many big-M constants in the constraints, even for a small-sized instance. 
\begin{equation} 
\label{eq:minlp}
\begin{split}
\max_{x,z} \qquad & \sum_{i=1}^nx^2_i(K) \\
\text{s.t.} \qquad & x_i(k) = \sum_{l=1}^m \sum_{j=1}^n A_{lij}x_j(k-1)z_{k,l}, i\in[n], k\in [K],\\
&\sum_{l=1}^mz_{k,l}=1, k\in[K],\\
& z_{k,l} \in \{0,1\}, l \in [m], k\in [K],\\
& x(0) = a.
\end{split}
\end{equation}

Our algorithm is coded in Matlab. 
Computational experiments are conducted on a laptop with Intel i7-6560U 2.20 GHz and 8 GB of RAM memory, under Windows 10 Operating System.
The MINLP formulation is coded in AMPL and solved by Baron 18.5.8.
The time limit for each instance is set to 600s.
When $n \leq 5$, our algorithm employs Matlab's build-in function \textsl{convhulln} to construct the set of extreme points directly. 
When $n \geq 6$, our algorithm solves a linear program with the commercial solver Gurobi~\cite{gurobi} to identify each extreme point.
%
%
The computational results are summarized in Table~\ref{table:finalresult}.
All test instances are solved to optimality by our algorithm within the time limit.
The average solution time of our algorithm is reported in the rows ``Our algorithm (s)''.
On the other hand, Baron cannot solve most instances to optimality, and has a variety of output for instances of different sizes.
Instead of reporting the solution time, we report the number of instances with different outputs by Baron in three categories that were described in~\cite{neumaier2005comparison}:
The symbol \textsf{G} (\textsf{G!}) denotes that Baron finds a global optimal solution and proves (cannot prove) its optimality within the time limit; The symbol \textsf{Limit} denotes that Baron finds some feasible solution within the time limit; The symbol \textsf{Wrong} denotes that Baron reports infeasibility or failure.

\begin{table}[htb]
\centering
\begin{tabular}{|c|c|ccccc|}
\hline
\multicolumn{2}{|c|}{$(n,m,K)$} & (2,2,20) & (2,2,50) & (2,2,500) & (2,5,500)& (2,10,500)\\
\hline
\multicolumn{2}{|c|}{Our algorithm (s)} &0.013 &0.031&0.300 & 0.298&0.289\\
\hline
\multirow{3}{*}{Baron}  &  \textsf{G}/\textsf{G!}& 4/6 &2/5 &4/2 &0/0 & 0/0\\
                        &\textsf{Limit} &0& 2&1 &7 &7\\
                       & \textsf{Wrong} &0& 1&3&3&3\\
\hline\hline
\multicolumn{2}{|c|}{$(n,m,K)$} & (5,2,100) & (5,5,100) & (5,10,100) & (8,2,50)& (10,2,20)\\
\hline
\multicolumn{2}{|c|}{Our algorithm (s)}  &1.094 &2.456 &2.405 &59.457 &58.357\\
\hline
\multirow{3}{*}{Baron}  &  \textsf{G}/\textsf{G!} & 0/0 &0/0 &0/0 &0/0 & 0/0\\
                        &\textsf{Limit} &0& 0&1 &0 &10\\
                       & \textsf{Wrong} &10& 10&9&10&0\\
\hline
\end{tabular}
\caption{The average running time of our algorithm and solution statistics of Baron}
\label{table:finalresult}
\end{table}

Our proposed algorithm has a clear advantage over Baron in solving \prob{}. 
Our algorithm is very efficient in solving instances with $n=2$ and large $m$ and $K$, requiring less than one second.
When $n$ increases to $8$ and $10$, our algorithm is able to solve instances with $K=50$ and $K=20$ respectively in less than one minute. 
On the other hand, Baron is only able to solve several instances with a pair of $2 \times 2$ matrices to optimality.
When $n$ or $m$ is larger than 2, it either cannot find the optimal solution within the time limit or runs into numerical issues.
Finally, we observe that when the problem dimension $n \ge 8$, our algorithm is not able to solve instances with $K=100$ within the time limit, since the running time grows rapidly with $K$.
We suspect the reason is that the set of randomly generated matrices no longer has the \polyv{} property for larger $n$.
This observation is also consistent with the fact that \prob{} is NP-hard for general $n$.
%


\section{Open Problems and Conclusions}
\label{sec:conclusion}
The problem \prob{} has many applications in operations research and control, and can also be seen as an approximation to the dynamics of more general continuous-time nonlinear switched systems.
In this paper, we preset an efficient exact algorithm to solve large-sized instances of \prob{} that cannot be handled by state-of-the-art optimization software.
We introduce an interesting property---the \polyv{} property---for a finite set of matrices to help analyze the time complexity of our algorithm.
We now present several open questions on the \polyv{} property, which we believe may be of independent interest.
\begin{enumerate}
	\item Does any finite set of $2 \times 2$ matrices with rational entries have the \polyv{} property?
	\item Does any finite set of $2 \times 2$ real matrices have the \polyv{} property?
	\item Is there an ``easy-to-check'' necessary condition for a set of matrices to have the \polyv{} property? Is there a finite-time algorithm to test the \polyv{} property for a given set of matrices with rational entries? If so, is deciding whether such a set of matrices has the \polyv{} property in P or NP?	
	\item Does the finiteness property imply the \polyv{} property, and vice versa?
	\item Is $N_k(\Sigma)=O(k)$ for any pair of $2 \times 2$ binary matrices?
\end{enumerate}
The last question comes from our observation that $N_k(\Sigma, a)$ grows linearly with $k$ for any $2 \times 2$ binary matrices in the computational experiment.
We believe an answer to any of the above questions will be instrumental in designing a faster exact algorithm for \prob{}.
%

%

%
%


\bibliographystyle{plain}      
\bibliography{ss}   

\end{document}